\documentclass[final]{dmtcs-episciences}
\usepackage{amsmath,amssymb,amsfonts}
\usepackage[english]{babel}
\usepackage{hhline}
\usepackage{amsthm}
\usepackage{latexsym}
\usepackage{graphicx}
\usepackage{color}
\usepackage[table]{xcolor}
\usepackage{diagbox}
\usepackage{lineno}
\usepackage{url}


\newcommand\blfootnote[1]{%
  \begingroup
  \renewcommand\thefootnote{}\footnote{#1}%
  \addtocounter{footnote}{-1}%
  \endgroup
}

\newtheorem{theorem}{Theorem}

\newtheorem{corollary}[theorem]{Corollary}

\newtheorem{lemma}[theorem]{Lemma}

\newtheorem{openproblem}{Open Problem}
\newtheorem{observation}{Observation}






\author{
  B. M. \'Abrego\affiliationmark{1}
  \and
  S. Fern\'andez-Merchant\affiliationmark{1}\thanks{Supported by the NSF grant DMS-1400653.}
  \and
  M. Kano\affiliationmark{2}\thanks{Supported by JSPS KAKENHI Grant Number 16K05248.}\\
  \and
  D. Orden\affiliationmark{3}\thanks{Supported by Project MTM2017-83750-P of the Spanish Ministry of Science (AEI/FEDER, UE).}
  \and
  P. P\'erez-Lantero\affiliationmark{4}\thanks{Supported by CONICYT FONDECYT/Regular 1160543 (Chile) and Millennium Nucleus Information and Coordination in Networks ICM/FIC RC130003 (Chile).}
  \and
  C. Seara\affiliationmark{5}\thanks{Supported by the projects Gen. Cat. DGR 2014SGR46 and MINECO MTM2015-63791-R.}
  \and
  J. Tejel\affiliationmark{6}\thanks{Supported by MINECO project MTM2015-63791-R and Gobierno de Arag\'on under Grants E58 (ESF) and E41-17R.}
}

\title{$K_{1,3}$-covering red and blue points in the plane\thanks{An extended abstract of this work has appeared at the 33rd European Workshop on Computational Geometry (EuroCG 2017).}}

\affiliation{
California State University, Northridge, USA\\
Ibaraki University, Japan\\
Universidad de Alcal\'{a}, Spain\\
Universidad de Santiago, Chile\\
Universitat Polit\`{e}cnica de Catalunya, Spain\\
Universidad de Zaragoza, Spain
}

\keywords{Non-crossing geometric graph, star, covering, red and blue points.}

\received{2018-5-25}

\revised{2019-1-15}

\accepted{2019-1-16}

\begin{document}
\publicationdetails{21}{2019}{3}{6}{4537}
\maketitle

\blfootnote{\begin{minipage}[l]{0.3\textwidth} \includegraphics[trim=10cm 6cm 10cm 5cm,clip,scale=0.15]{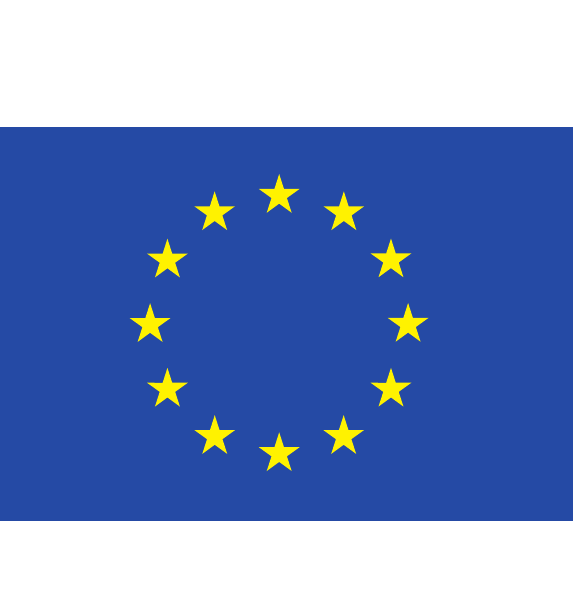} \end{minipage}  \hspace{-2cm} \begin{minipage}[l][1cm]{0.7\textwidth}
 	  This work has received funding from the European Union's Horizon 2020 research and innovation programme under the Marie Sk\l{}odowska-Curie grant agreement No 734922.
 	\end{minipage}}
 	
\begin{abstract}
We say that a finite set of red and blue points in the plane in general position can be $K_{1,3}$-covered if the set can be partitioned into subsets of size $4$, with $3$ points of one color and $1$ point of the other color, in such a way that, if at each subset the fourth point is connected by straight-line segments to the same-colored points, then the resulting set of all segments has no crossings.
We consider the following problem: \emph{Given a set $R$ of $r$ red points and a set $B$ of $b$ blue points in the plane in general position, how many points of $R\cup B$ can be $K_{1,3}$-covered}? and we prove the following results:
\begin{itemize}
\item[(1)] If $r=3g+h$ and $b=3h+g$, for some non-negative integers $g$ and $h$, then there are point sets $R\cup B$, like $\{1,3\}$-equitable sets (i.e., $r=3b$ or $b=3r$) and linearly separable sets, that can be $K_{1,3}$-covered.
\item[(2)] If $r=3g+h$, $b=3h+g$ and the points in $R\cup B$ are in convex position, then at least $r+b-4$ points can be $K_{1,3}$-covered, and this bound is tight.
\item[(3)] There are arbitrarily large point sets $R\cup B$ in general position, with $r=b+1$, such that at most $r+b-5$ points can be $K_{1,3}$-covered.
\item[(4)] If $b\le r\le 3b$, then at least $\frac{8}{9}(r+b-8)$ points of $R\cup B$ can be $K_{1,3}$-covered. For $r>3b$, there are too many red points and at least $r-3b$ of them will remain uncovered in any $K_{1,3}$-covering.
\end{itemize}
Furthermore, in all the cases we provide efficient algorithms to compute the corresponding coverings.
\end{abstract}

\section{Introduction}\label{sec-1}

\subsection{Description of the problem}\label{sect:background}

Given sets $R$ of red points and $B$ of blue points in the plane, such that $R\cup B$ is in general position (there are no three collinear points), we say that a graph $G$ \emph{covers} $R\cup B$ if: (i)~The vertex set of $G$ is $R\cup B$, (ii)~every edge of $G$ is a straight line segment connecting a red point and a blue point, and (iii)~no two edges intersect except in their endpoints. Analogously, for a fixed graph $G$ of $k$ vertices, we say that $R\cup B$ has a $G$-\emph{covering}, or can be $G$-\emph{covered}, if $|R\cup B|=t\cdot k$ for some integer $t$, and the graph $G_t$ resulting from the union of $t$ copies of~$G$ covers $R\cup B$. (Note the difference between this notion, of covering a point set with copies of a graph, and the classical vertex cover problem in graphs~\cite{Angel2018}).

Let us consider as graph~$G$ the complete bipartite graph $K_{1,n}$, which we will call the \emph{star} of order~$n+1$ \emph{centered} at the partition of size~$1$.
It is well known that, for $S=R\cup B$ in general position with $|R|=|B|$, a $K_{1,1}$-covering always exists: Using recursively the Ham-Sandwich theorem allows to find a \emph{non-crossing geometric alternating perfect matching}~\cite{L1983}. Furthermore, such a matching can be extended, adding edges, to a non-crossing geometric alternating spanning tree~\cite{hurtado2008}.

It is also known that, for $S=R\cup B$ in general position with $|R|=2g+h$ and $|B|=g+2h$, a $K_{1,2}$-covering always exists. Moreover, this covering uses $g$ stars centered at a blue point and $h$ stars centered at a red point. The result follows from this theorem

\begin{theorem}[Kaneko, Kano, and Suzuki~\cite{KKS2004}]\label{th:A}
Let $g$ and $h$ be non-negative integers. If $n$ is an even integer such that $2\le n \le 12$, then any set of $(n/2)g$ red points and $(n/2)g$ blue points in the plane in general position can be $P_n$-covered. If $n$ is an odd integer such that $3\le n\le 11$, then any set of $\lceil n/2\rceil g+\lfloor n/2 \rfloor h$  red points and $\lfloor n/2 \rfloor g+\lceil n/2\rceil h$ blue points in the plane in general position can be $P_n$-covered.
\end{theorem}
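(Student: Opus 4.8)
I would prove Theorem~\ref{th:A} by induction on $g+h$, the number of paths, peeling off in each step a single copy of $P_n$ that is geometrically isolated from everything else and then recursing on a strictly smaller instance. The base cases are essentially known or easy: $n=2$ is the non-crossing alternating perfect matching obtained from iterated Ham--Sandwich, and $n=3,4$ can be checked by hand on the few possible configurations. For the inductive step I look for a convex region $C$ --- a half-plane, or a cone based at a vertex of the convex hull of $R\cup B$, or the cell cut off by one or two chords through points of $R\cup B$ --- such that: (i)~$C$ contains exactly $n$ points, in a color balance equal to that of one $P_n$, i.e.\ $(\lceil n/2\rceil,\lfloor n/2\rfloor)$ or $(\lfloor n/2\rfloor,\lceil n/2\rceil)$; (ii)~the remaining points still decompose as $g'$ red-heavy plus $h'$ blue-heavy path balances with $g'+h'=g+h-1$; and (iii)~the $n$ points of $C$ carry a single non-crossing alternating spanning path lying inside $C$, hence not interfering with the covering of the rest. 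Items (i)--(ii) form the ``cutting'' part and item (iii) the ``local path'' part; since every choice can be realized constructively (Ham--Sandwich cuts, convex-hull peeling), the same scheme yields the claimed efficient algorithm.

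\textbf{The cutting part.}
To produce $C$ with the prescribed count and color split I would run a rotating-line / discrete-continuity argument in the spirit of Ham--Sandwich: as a directed line sweeps, the set of its ``first $n$ points'' changes only by transpositions, so the number of red points among them varies by at most one at a time, and an intermediate-value argument delivers the needed split. The caveat is that for badly interleaved colorings (say, all red points surrounding all blue ones) no half-plane works, and one must instead cut off a cone at a hull vertex or a two-chord cell, which still carries a prescribed number of points with a controllable color count; for $n$ odd it is convenient to first separate, by one such cut, the points destined for the $g$ red-heavy paths from those for the $h$ blue-heavy paths, and then subdivide each part further. The bookkeeping in (ii) is purely arithmetic: for $n$ even every $P_n$ has balance $(n/2,n/2)$, so any split is consistent, while for $n$ odd one checks against $|R|=\lceil n/2\rceil g+\lfloor n/2\rfloor h$ and $|B|=\lfloor n/2\rfloor g+\lceil n/2\rceil h$ that removing one group of either type and distributing the remaining groups across the two sides is always possible. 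I expect this part to be routine once the right repertoire of isolating regions is fixed.

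\textbf{The local path lemma --- the main obstacle.}
The crux, and the only place the hypotheses $n\le 12$ (resp.\ $n\le 11$) enter, is the statement: any $n$ points with a $P_n$-balance admit a non-crossing alternating spanning path. My approach is induction on $n$: pick a vertex $v$ of the convex hull of these $n$ points lying in the majority color, join it by an edge to a suitably chosen point $u$ of the opposite color --- a hull neighbour when one exists, otherwise an interior point that $v$ sees without obstruction --- and recurse on the remaining $n-1$ points, whose coloring is again near-balanced; when $u$ is a hull neighbour the edge $uv$ lies on the hull and crosses nothing. The difficulty is that the recursion must simultaneously preserve the color balance \emph{and} keep available an oppositely colored vertex to attach next, and a greedy choice can stall --- typically when the convex hull of the current subset has become monochromatic in an awkward way (all majority-color vertices on the hull, minority points buried inside), forcing the new edge to reach into the interior and threatening future crossings. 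Controlling this requires a finite case analysis on the color pattern around the convex hull of the small set; the analysis closes exactly in the stated range, and for larger $n$ one can design configurations whose hull defeats the induction, which is why the method caps at $\lceil n/2\rceil\le 6$. This local lemma, together with pinning down where it becomes tight, is what I expect to be hardest; by comparison the cutting part and the outer induction on $g+h$ are comparatively mechanical.
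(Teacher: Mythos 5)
First, a point of reference: the paper does not prove this statement. Theorem~\ref{th:A} is quoted verbatim from Kaneko, Kano and Suzuki~\cite{KKS2004} and is used as a black box, so there is no in-paper proof to compare your attempt against. Judged on its own terms, your outline does follow the broad strategy of the cited proof --- partition the plane into convex regions, each receiving the vertex set of one copy of $P_n$ with the correct color split, then realize a non-crossing alternating Hamiltonian path inside each region --- but as written it is a plan with its two load-bearing steps left open.

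The first gap is in the cutting part. A single half-plane (rotating-line) cut with a prescribed point count and color split does not always exist: the discrete intermediate-value argument can fail when the relevant sign is the same on both sides for every direction, and this is exactly why~\cite{KKS2004} and Bespamyatnikh et al.\ (Theorems~\ref{th:5} and~\ref{th:3-cut} of the present paper) work with the dichotomy ``equitable $2$-cutting or equitable $3$-cutting into three wedges with a common apex,'' proved via a separate topological argument. Your parenthetical mention of ``a cone at a hull vertex or a two-chord cell'' gestures at this, but the existence of such a region with the prescribed counts is not routine and is not established by your sweep argument; likewise your proposed preliminary cut separating the $g$ red-heavy groups from the $h$ blue-heavy groups is precisely a $2$-cutting that need not exist. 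The second and more serious gap is the local path lemma: that any $\lceil n/2\rceil+\lfloor n/2\rfloor$ points with a $P_n$-balance admit a non-crossing alternating spanning path for $n\le 12$ ($n\le 11$ odd) is the entire content of the theorem, and your greedy hull-peeling induction is exactly the kind of argument that is known to stall --- it must, since the statement is false for $n=13$ and $n\ge 15$. Asserting that ``the case analysis closes exactly in the stated range'' is naming the difficulty, not resolving it; without that finite but substantial analysis the proof is not complete.
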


\noindent which also provides a sufficient condition for a given set of red and blue points to have a $P_n$-covering, being $P_n$ the (non-crossing alternating) path of length~$2\leq n\leq 12$. These bounds are the best possible, since when $n=13$ or $n\ge 15$, there exist configurations of $\lceil n/2\rceil$ red points and $\lfloor n/2 \rfloor$ blue points for which there does not exist any $P_n$-covering~\cite{KKS2004}.
In addition, one can build point configurations with $m$ red points and $m$ blue points in convex position such that the longest non-crossing alternating path can cover at most $\frac{4}{3}m+o(m)$ of the $2m$ points~\cite{AGHT2003,KPT2008}. Therefore, for $n$ large in relation to the total number of red and blue points, a big portion of the points could be left uncovered when trying to cover the points with paths of size $n$.

\begin{figure}[ht]
\begin{center}
\includegraphics[width=\textwidth]{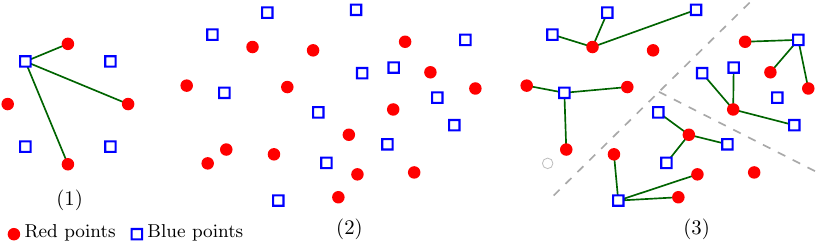}
\caption{(1) At most $4$ points of given $8$ points can be $K_{1,3}$-covered. (2) A set of $15$ red points and $13$ blue points. (3) There is a subdivision of the plane into three convex regions that induces a partition of the $14$ red points and $13$ blue points into three sets containing either $5$ red and $4$ blue points, or $4$ red and $5$ blue points; inside each set, $8$ points can be $K_{1,3}$-covered. Note that point set in (3) is obtained from (2) by removing one red point.}\label{fig:1}
\end{center}
\end{figure}

Knowing that $K_{1,1}$- and $K_{1,2}$-coverings do always exist, this paper goes one step further and considers the problem for $K_{1,3}$-coverings. It turns out that such coverings are not always possible: In Section~\ref{sec-2} we exhibit some bicolored point sets which do not admit a $K_{1,3}$-covering, although we also show that there exist bicolored point sets which can always be $K_{1,3}$-covered. Hence, we study the problem of \emph{given a set of $R$ red points and $B$ blue points in the plane in general position, how many points of $R\cup B$ can be $K_{1,3}$-covered}?  See Figure~\ref{fig:1}. In Section~\ref{sec-3}, we prove that, for any set $R$ of red points and any set $B$ of blue points in the plane in general position such that $|B|\le |R|\le 3|B|$, at least $\frac{8}{9}(|R|+|B|-8)$ points can be $K_{1,3}$-covered. Furthermore, for the configurations studied in these sections, we show efficient algorithms for computing the corresponding coverings. Finally, in Section~\ref{sec-4} we detail some concluding remarks and open problems.

\subsection{Related work}\label{sect:related}

A considerable amount of research about discrete geometry on red and blue points in the plane has been done. Questions about how to cover the points using specific geometric structures, or how to partition the plane into convex pieces such that each piece contains roughly the same number of red and blue points, have been widely studied in the literature. The reader is referred to~\cite{KK2003} (and the references therein) for a survey on this topic.

One of the more challenging problems on covering red and blue points, proposed by Erd\H{o}s~\cite{PH1989} in 1989 (see also \cite{AU90}), is that of determining the largest number $s(n)$ such that, for every set of $n$ red and $n$ blue points on a circle, there exists a \emph{non-crossing geometric alternating path} consisting of $s(n)$ vertices. Erd\H{o}s conjectured that $s(n)= \frac{3}{2}n+2+o(n)$, but one can find point configurations for which  $s(n)<\frac{4}{3}n +o(n)$, as described in~\cite{AGHT2003,KPT2008}. The best bounds up to date for $s(n)$ are due to Kyn\v{c}l, Pach and T\'{o}th~\cite{KPT2008}, and valid for bicolored point sets in general position. However, the conjecture that $|s(n)-\frac{4}{3}n|= o(n)$ remains open, even for points in convex position.

If crossings are allowed, Kaneko et al.~\cite{KKY2000} proved that, for $R$ and $B$ in general position with $|R|=|B|$, there exists a geometric Hamiltonian alternating cycle on $R\cup B$ that has at most $|R|-1$ crossings. Moreover, this bound is best possible. Claverol et al.~\cite{claverol2016} obtain the same result, but requiring the geometric Hamiltonian alternating cycle to be 1-plane (that is, every edge is crossed at most once). They also give an upper bound on the number of crossings of such a cycle, which depends on the number of \emph{runs} of the points of $S=R\cup B$ on the boundary of the convex hull of $S$ (a run is a maximal set of consecutive points of the same color). Garc\'{\i}a and Tejel~\cite{GT2017} give a polynomial algorithm to find the \emph{shortest} Hamiltonian alternating cycle for a bicolored bipartite graph $G(R\cup B,E)$ satisfying the quadrangle property, as turns out to be the case for a bicolored point set $R\cup B$ in the plane in convex position.

While trying to cover a bicolored point set in the plane with a non-crossing geometric Hamiltonian alternating path or cycle is not always possible, this is not the case for \emph{non-crossing geometric alternating spanning trees}. One only needs to take any red point and connect it to all the blue points. Then, extending the resulting edges from the blue endpoints to partitioning the plane into cones, the remaining red points in each cone can be connected to a suitable blue point on the boundary of that cone. One can even bound the maximum vertex degree: Abellanas et al.~\cite{AGHNR1999} proved that, if $|R|=|B|$, then there exists a non-crossing geometric alternating spanning tree on $R\cup B$ having maximum degree at most $O(\log(|R|))$, and Kaneko~\cite{K2000} proved a bound of at most~3. Recently, Biniaz et al.~\cite{biniaz2016} proved the existence of a non-crossing geometric alternating spanning tree of degree at most $\max\{3, \lceil\frac{|R|-1}{|B|} \rceil +1\}$. As a counterpart to covering problems with alternating graphs, in~\cite{DK2001,T1996} covering problems with monochromatic graphs (paths, matchings or trees) are studied. We also refer the reader to~\cite{bereg2012,chan2015,evans2016,fulek2013,giacomo2012,hurtado2009} for other related problems about covering red and blue points in the plane and to~\cite{KSU2014} for coverings involving points colored with more than two colors.

A related family of problems is that of balanced subdivision problems, in which one would like to partitioning the plane (or $\mathbb{R}^d$) into convex regions in such a way that some desired properties on the number of red and blue points in each region are achieved. Perhaps, the most celebrated result in this family of problems is the Ham-Sandwich theorem which states that, for any $d$ point sets in $\mathbb{R}^d$ (in general, $d$ measures), there is a hyperplane bisecting each of these sets (measures). Apart from the intrinsic theoretical interest of these partitioning problems, the relevance of many of the results obtained for these problems relies on the fact that they have a lot of applications. For instance, some of the results given in Theorem \ref{th:A} are based on particular partitions of the plane. As we explain in the forthcoming sections, we also apply some of these partitions to obtain some of our results. For a wide range of geometric partitioning results, the reader is referred to~\cite{BKS2000,HKV2016,Ito2000,KK1999,KK2003,KKS2004,KK2016,Sakai2002} and the references therein.

\subsection{Some notation and useful lemmas} \label{sect:lemmas}

Throughout this paper, we will always assume that no three points are collinear. We denote by $R$ a set of red points in the plane and by $B$ a set of blue points in the plane. The convex hull of any point set $S$ is denoted by $conv(S)$ and a finite point set $S$ in the plane is in {\it convex position} if the points of $S$ are the vertices of $conv(S)$. Abusing the notation, when no confusion can arise we will also use $conv(S)$ to denote the boundary of the convex hull. Throughout the paper, we deal with \emph{directed lines} in order to define the right side of a line and the left side of it. Thus, a \emph{line} will mean a directed line. A line $\ell$ dissects the plane into three pieces: $\ell$ and two open half-planes $left(\ell)$ and $right(\ell)$, where $left(\ell)$ and $right(\ell)$ denote the \emph{open half-planes} to the left of $\ell$ and to the right of~$\ell$, respectively. Notice that, if $\ell^*$ denotes a line on $\ell$ with the opposite direction of $\ell$, then $left(\ell^*)=right(\ell)$ and $right(\ell^*)=left(\ell)$.

For a set $S=R\cup B$ of red and blue points in general position, if $R$ consists of exactly $r$ red points and $B$ consists of exactly $b$ blue points, we say that $S$ is an \emph{$(r,b)$-set}. We define $\mathcal{C}(S)$ as the maximum number of points in $S$ that can be $K_{1,3}$-covered. For non-negative integers $r$ and $b$, we define $\mathcal{C}(r,b)$ as the minimum of $\mathcal{C}(S)$ over all $(r,b)$-sets $S$ in general position. Sometimes it is better to look at the number of points that are left \emph{uncovered}. Hence, we define $\mathcal{U}(S)=|S|-\mathcal{C}(S)$ and $\mathcal{U}(r,b)=r+b-\mathcal{C}(r,b)$. Then, $\mathcal{U}(r,b)$ is the maximum of $\mathcal{U}(S)$ over all $(r,b)$-sets $S$ in general position.

To finish this section, we present two lemmas which might be useful also in a more general context. They are similar to other well-known results of this type and their proofs are based on a simple continuity argument. The first one is an intermediate value result on red and blue points.

\begin{lemma}\label{lem:9}
Let $S = R\cup B$ be a bicolored point set in the plane. If there exist two directed lines $\ell_1$ and $\ell_2$ such that $|left(\ell_1)\cap (R\cup B)|=|left(\ell_2)\cap (R\cup B)|=m$ and $|left(\ell_1)\cap B|<|left(\ell_2)\cap B|$, then:
\begin{itemize}
  \item[(i)] For every integer $j$, $|left(\ell_1)\cap B|\leq j \leq |left(\ell_2)\cap B|$, there exists a directed line~$\ell_3$ such that $|left(\ell_3)\cap (R\cup B)|=m$ and $|left(\ell_3)\cap B|=j$.
  \item[(ii)]\label{lem:9(ii)} There is a directed line $\ell_4$ through a blue point such that $|left(\ell_4)\cap (R\cup B)|=m-1$ and $|left(\ell_4)\cap B|=|left(\ell_1)\cap B| $.
\end{itemize}
 Moreover, $\ell_3$ and $\ell_4$ can be found in $O(N^{\frac{4}{3}}\log (N))$ time, where $N = |S|$.
\end{lemma}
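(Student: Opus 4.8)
The plan is to derive both parts from a single sweeping argument that tracks how the number of blue points strictly to the left of a directed line evolves when the line is rotated while being forced to keep exactly $m$ points of $S$ strictly to its left. Concretely, for a direction $\theta$ project the $N$ points of $S$ onto the normal to a directed line of direction $\theta$; since no three points of $S$ are collinear, these $N$ projections are pairwise distinct for all but finitely many $\theta$. Fix such a generic $\theta$ and list the points as $p_{(1)},\dots,p_{(N)}$ by increasing projection. The directed lines of direction $\theta$ with exactly $m$ points strictly to the left (the smaller-projection side) form a nonempty open slab, nonempty since $0\le m\le N$, bounded by $p_{(m)}$ and $p_{(m+1)}$; let $\ell(\theta)$ be the line in the middle of this slab and put $B(\theta)=|left(\ell(\theta))\cap B|$, the number of blue points among $p_{(1)},\dots,p_{(m)}$. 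The key observation, obtained by inspecting how the sorted order changes as $\theta$ varies, is that $B(\theta)$ is an integer step function whose value can change only at directions equal to that of a segment joining two points of $S$, and changes only when this segment is the pair $\{p_{(m)},p_{(m+1)}\}$; in that case $|B|$ changes by $+1$ if the point entering the left side is blue and the one leaving is red, by $-1$ in the symmetric case, and by $0$ if the two endpoints have the same colour. Since general position forbids three collinear points, at any fixed direction at most one segment can be the pair $\{p_{(m)},p_{(m+1)}\}$, and hence every jump of $B$ has absolute value at most $1$.

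\emph{Part (i).} Rotating $\ell_1$ and $\ell_2$ infinitesimally if needed, we may assume their directions $\theta_1,\theta_2$ are generic and distinct; since $\ell_i$ lies in the slab of $\ell(\theta_i)$ and $B$ is constant on each slab, $B(\theta_1)=|left(\ell_1)\cap B|=:c_1$ and $B(\theta_2)=|left(\ell_2)\cap B|=:c_2>c_1$. Let $\theta$ traverse an arc of directions from $\theta_1$ to $\theta_2$. Then $B(\theta)$ passes from $c_1$ to $c_2$ changing by at most $1$ at each step, so by the discrete intermediate value theorem it attains every integer $j$ with $c_1\le j\le c_2$; the line $\ell(\theta)$ at such a $\theta$ is the desired $\ell_3$.

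\emph{Part (ii).} Along the same arc let $\theta_0=\inf\{\theta:B(\theta)=c_1+1\}$, which is well defined because $c_1+1\le c_2$ and $B$ takes every value between $c_1$ and $c_2$; since $B$ starts at $c_1$ and changes by at most $1$, its value immediately before $\theta_0$ equals $c_1$ (a larger value would force $B$ to reach $c_1+1$ earlier). At the direction $\theta_0$ the point $a$ of rank $m$ leaves the left side while the point $b$ of rank $m+1$ enters it, and since $B$ increases by $1$, the point $a$ is red and $b$ is blue. By general position the line $L$ of direction $\theta_0$ through $a$ and $b$ contains no further point of $S$, has exactly $p_{(1)},\dots,p_{(m-1)}$ strictly to its left, and exactly $c_1$ of those are blue (the $m$-th point $a$ is red). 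Tilting $L$ about $b$ by an arbitrarily small angle, in the direction that moves $a$ strictly to the right, gives a directed line $\ell_4$ through the blue point $b$ with $|left(\ell_4)\cap(R\cup B)|=m-1$ and $|left(\ell_4)\cap B|=c_1$, as required.

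\emph{Running time, and the main obstacle.} The geometric content ends here; carrying it out within $O(N^{4/3}\log N)$ time is the more technical part. Under the standard point--line duality, a directed line with $m$ points strictly to the left corresponds to a point on the $m$-level of the arrangement of the $N$ dual lines, and ``number of blue points to the left'' becomes ``number of blue dual lines below''. Walking along the $m$-level while maintaining this blue count, and stopping at the first point where it equals $j$ (which yields $\ell_3$) or at the first vertex where it jumps from $c_1$ to $c_1+1$, the two dual lines meeting there being the duals of $a$ and $b$ (which yields $\ell_4$), produces the claimed lines. Since the $m$-level of $N$ lines has combinatorial complexity $O(N^{4/3})$ and the relevant portion of it, together with the blue-count information, can be computed in $O(N^{4/3}\log N)$ time by the known machinery for levels and $k$-sets in line arrangements, the time bound follows. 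The delicate point is thus not the continuity argument, which is elementary as announced, but importing (or re-deriving) a level-construction routine of the right efficiency.
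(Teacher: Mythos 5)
Your proof is correct and follows essentially the same route as the paper's: a rotational sweep of the directed line constrained to keep $m$ points on its left, observing that the blue count changes by at most one at each critical direction (giving (i) by discrete intermediate value), extracting $\ell_4$ from the red--blue swap at the first jump (the paper uses the symmetric decreasing jump, which is equivalent), and appealing to $k$-set/level enumeration with the $O(Nk^{1/3})$ bound of Dey for the $O(N^{4/3}\log N)$ running time. The only cosmetic difference is that you phrase the algorithmic part in the dual (walking the $m$-level) while the paper enumerates $m$-sets in the primal via Edelsbrunner--Welzl with the Brodal--Jacob convex-hull structure; these are the same computation.
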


\begin{proof}
For each direction $\theta$, let $\ell_{\theta}(m)$ be a directed line with  direction~$\theta$ such that $|left(\ell_{\theta}(m))\cap (R\cup B)| =m$, if it exists. The line $\ell_{\theta}(m)$ does not exist precisely when there is a line $\ell'_{\theta}(m)=\overrightarrow{xy}$ in direction $\theta$ through two points $x$ and $y$ in $R\cup B$ such that $|left(\ell'_{\theta}(m))\cap (R\cup B)|=m-1$. Let $S_{\theta}(m)=left(\ell_{\theta}(m))\cap (R\cup B)$ if $\ell_\theta(m)$ exists, and $S_{\theta}(m)=\{x\}\cup (left(\ell'_{\theta}(m))\cap (R\cup B))$ otherwise.

As $\theta$ changes continuously from $0$ to $2\pi$ (counterclockwise), the set $S_\theta$ changes finitely many times, precisely each time that $\ell_\theta (m)$ does not exist. So every time $S_\theta$ changes, it does so only by one point. Namely, if the change happens at $\ell'_\theta (m)=\overrightarrow{xy}$, then for $\varepsilon>0$ small enough $S_{\theta}(m)=S_{\theta-\varepsilon}(m)\cup \{x\}\setminus\{y\}$. At that point, $|S_{\theta}(m)\cap B|$ and $|S_{\theta}(m)\cap R|$ do not change when $x$ and $y$ have the same color; $|S_{\theta}(m)\cap B|$ increases by one and $|S_{\theta}(m)\cap R|$ decreases by one when $x$ is blue and $y$ is red; and $|S_{\theta}(m)\cap B|$ decreases by one and $|S_{\theta}(m)\cap R|$ increases by one when $x$ is red and $y$ is blue. So $|S_{\theta}(m)\cap B|$ achieves all possible values between $|left(\ell_1)\cap B|$ and $|left(\ell_2)\cap B|$.

In particular, if $\ell'_\theta(m)=\overrightarrow{xy}$ and at this moment $|S_{\theta}(m)\cap B|$ is decreasing from $|left(\ell_1)\cap B|+1$ to $|left(\ell_1)\cap B|$, then $x$ is red, $y$ is blue, and for some $\varepsilon>0$ small enough, the line $\ell_4$ in direction $\theta-\varepsilon$ passing through $y$ satisfies that $|left(\ell_4)\cap (R\cup B)|=m-1$ and $|left(\ell_4)\cap B|=|left(\ell_1)\cap B|$.

Observe that, in the rotation process, we are building a subset of the $m$-sets of $S$. For $n(k)$ denoting the number of $k$-sets of a point set $S$, Edelsbrunner and Welzl~\cite{EW1986} gave an $O(N\log(N)+n(k)\log^2 (N))$ algorithm to find all $k$-sets. Their algorithm is based on the dynamic algorithm by Overmars and van Leeuwen \cite{OL1981} that maintains a convex hull in $O(\log^2 (N))$ time per update. Using the algorithm given by Brodal and Jacob in~\cite{BJ02}, that maintains a convex hull in $O(\log (N))$ time per update, the algorithm from Edelsbrunner and Welzl can be directly improved to $O(N\log(N)+n(k)\log (N))$ time.
As $n(k)=O(Nk^{\frac{1}{3}})$~\cite{D1998}, lines $\ell_3$ and $\ell_4$ can be found in $O(N^{\frac{4}{3}}\log (N))$ time.
\end{proof}

\begin{lemma}\label{lem:10}
Let $S$ be a point set in the plane (not necessarily bicolored) and let $m$ be an integer such that $m\leq \frac{1}{2}|S|$. If there is a directed line $\ell$ through a point $x\in S$ such that $|left(\ell)\cap S|\leq m$, then there is a directed line, say $\ell'$, through $x$ such that $|left(\ell')\cap S|=m$. Moreover, $\ell'$ can be found in $O(N\log(N))$ time, where $N=|S|$.
\end{lemma}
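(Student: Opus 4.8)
The plan is to rotate a directed line about $x$ through a half-turn and then run a discrete intermediate-value argument, in the same spirit as the continuity arguments used for the other lemmas of this type. First I would reduce to the situation in which $\ell$ passes through no point of $S$ other than $x$: if $\ell$ also contains some $y_0\in S\setminus\{x\}$, rotate $\ell$ about $x$ by a sufficiently small angle in the (unique) direction that moves $y_0$ into $right(\ell)$. For a small enough rotation no other point of $S$ changes sides, so $left(\ell)\cap S$ is unchanged; in particular $|left(\ell)\cap S|\le m$ still holds and now $\ell\cap S=\{x\}$, whence $|left(\ell)\cap S|+|right(\ell)\cap S|=|S|-1$.

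Next, if $|left(\ell)\cap S|=m$ we are done with $\ell'=\ell$. Otherwise $|left(\ell)\cap S|\le m-1$, and since $|S|\ge 2m$ the reversed line $\ell^*$ satisfies $|left(\ell^*)\cap S|=|right(\ell)\cap S|=|S|-1-|left(\ell)\cap S|\ge 2m-1-(m-1)=m$. Now rotate the directed line about $x$ continuously from $\ell$, in direction $\theta_0$, to $\ell^*$, in direction $\theta_0+\pi$, and let $g(\theta)=|left(\ell_\theta)\cap S|$ on the open arcs of directions for which $\ell_\theta$ avoids $S\setminus\{x\}$. Because no three points of $S$ are collinear, at each of the finitely many critical directions exactly one point of $S\setminus\{x\}$ lies on $\ell_\theta$, so $g$ changes by exactly one unit there and is constant in between; moreover $\theta_0$ and $\theta_0+\pi$ are not critical. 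Since $g(\theta_0)\le m-1<m$ and $g(\theta_0+\pi)\ge m$, the function $g$ attains the value $m$ on some open arc, and any $\ell'=\ell_\theta$ with $\theta$ in that arc is the desired line.

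For the running time, observe that a point $z\in S\setminus\{x\}$ lies in $left(\ell_\theta)$ precisely when $\theta$ belongs to the open half-circle arc $(\phi_z-\pi,\phi_z)$, where $\phi_z$ denotes the direction of the ray $\overrightarrow{xz}$; hence $g(\theta)$ equals the number of these $N-1$ arcs that contain $\theta$. I would compute the $2(N-1)$ arc endpoints, sort them cyclically in $O(N\log N)$ time, and perform a single angular sweep maintaining the running count, obtaining in $O(N)$ additional time the value of $g$ on every elementary arc. Since no three points of $S$ are collinear, all these endpoints are distinct, so the count changes by exactly $\pm1$ between consecutive elementary arcs; hence, by the argument above, some elementary arc carries the value $m$, and outputting any direction in it yields $\ell'$ in total $O(N\log N)$ time.

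The only delicate points are the degeneracies, and they are precisely where the no-three-collinear hypothesis is used: it guarantees that a single point crosses $\ell_\theta$ at each critical direction, so that $g$ moves one step at a time, and that the arc endpoints in the sweep are pairwise distinct. One should also check, as above, that the initial reduction to ``$\ell$ meets $S$ only in $x$'' does not raise $|left(\ell)\cap S|$ beyond $m$ — it does not, because $y_0$ is moved to the right side.
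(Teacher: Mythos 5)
Your proof is correct and follows essentially the same route as the paper: a $\pi$-rotation about $x$ combined with a discrete intermediate-value argument, implemented algorithmically by angularly sorting the other $N-1$ points around $x$ and sweeping. You are somewhat more careful than the paper about the degenerate starting position and the off-by-one coming from $x$ lying on the line, but these are refinements of the same argument, not a different approach.
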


\begin{proof}
Suppose that $|left(\ell)\cap S|=i\leq m\leq\frac{1}{2}|S|$. A continuous $\pi$-rotation of~$\ell$ through~$x$ starts with $|left(\ell)\cap S|=i\leq m$ and ends with $|left(\ell)\cap S|\geq |S|-i\geq\frac{1}{2}|S|\geq m$. Then, at some point during the rotation $|left(\ell)\cap S|=m$. To find such a rotation~$\ell'$ of the original line~$\ell$, we only need to order by slope the $N-1$ lines passing trough $x$ and each one of the remaining $N-1$ points of $S$ (this requires $O(N\log(N))$ time), and to explore these lines in order until finding $\ell'$ (this only requires linear time because when passing from a line to the next one, only a point passes from left to right of the explored line or vice versa).
\end{proof}

\section{Particular configurations}\label{sec-2}

In this section, we study some particular configurations of $(r,b)$-sets. As a first observation, notice that, if an $(r,b)$-set $S$ admits a $K_{1,3}$-covering consisting of $h\ge 0$ stars centered in red points and $g\ge 0$ stars centered in blue points, then $r=3g+h$, $b=3h+g$ and $|S|= 4(g+h)$. But this condition on the number of red and blue points is not sufficient to assure that a $K_{1,3}$-covering exists, as we show in Sections~\ref{subsec-2.1} and~\ref{subsec-2.5}. In Sections~\ref{subsec-2.4},~\ref{subsec-2.2} and~\ref{subsec-2.3}, we study some $(r,b)$-sets admitting a $K_{1,3}$-covering.

\subsection{Equitable sets}\label{subsec-2.4}

Let $S$ be an $(r,b)$-set such that either $r=3b$ or $b=3r$. In each of these cases we say that $S$ is a $\{1,3\}$-\emph{equitable set}. The following theorem, which is a generalization of the Ham-Sandwich theorem, allows us to show that any $\{1,3\}$-equitable set can be $K_{1,3}$-covered.

\begin{theorem} [Equitable Subdivision~\cite{BKS2000,BJ02,Ito2000,Sakai2002}]\label{th:C}
Let $c$, $d$ and $g$ be positive integers. If $cg$ red points and $dg$ blue points are given in the plane in general position, then there exists a subdivision of the plane into $g$ convex regions such that each region contains precisely $c$ red points and $d$ blue points. This subdivision can be computed in $O(N^{\frac{4}{3}}\log^2(N)\log(g))$ time, where $N=(c+d)g$.
\end{theorem}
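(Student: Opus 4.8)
The plan is to prove the statement by strong induction on $g$, each step splitting the current instance into two strictly smaller ones of the same kind by a single straight-line cut; since a straight cut of a convex region yields two convex regions, assembling the recursion produces a subdivision into convex pieces. The base case $g=1$ is immediate, and $g=2$ is a Ham-Sandwich cut: as $2c$ and $2d$ are even, there is a line with exactly $c$ red and $d$ blue points strictly on each side. (One may first reduce to $g$ prime — if $g=g'g''$, split into $g'$ convex regions carrying $cg''$ red and $dg''$ blue points each and recurse — but this is not essential.) The crux is the following splitting lemma: \emph{for $g\ge 2$ there are integers $g_1,g_2\ge 1$ with $g_1+g_2=g$, which can be taken roughly balanced (e.g.\ $g_1=\lfloor g/2\rfloor$), and a directed line $\ell$ with $|left(\ell)\cap R|=cg_1$ and $|left(\ell)\cap B|=dg_1$}. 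Granting it, the cut along $\ell$ gives two half-planes with the right colour counts, recursing inside each yields the subdivision, and the recursion tree has depth $O(\log g)$.

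To prove the splitting lemma I would argue by continuity over the space of directed lines, in the spirit of Lemma~\ref{lem:9}. Fix a target $g_1$ and let $k=(c+d)g_1$. For each direction $\theta$, let $\ell_\theta$ be a directed line in direction $\theta$ with exactly $k$ points of $R\cup B$ on its left (taken through a point of $S$ when needed, with the tie-break used in Lemma~\ref{lem:9}), and track $\varphi(\theta)=|left(\ell_\theta)\cap R|$. As $\theta$ sweeps $[0,2\pi)$ the left set of $\ell_\theta$ changes by one point at a time, so $\varphi$ changes by at most one at each of the finitely many events; combining this with a parity/extremal argument over the admissible range of targets $g_1$, one concludes that for at least one choice $\varphi$ attains $cg_1$, yielding the line. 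In the configurations where no balanced straight cut is compatible with the divisibility, the first cut is replaced by a two-line wedge anchored at a convex-hull vertex, whose removal still leaves a convex (``chamfered'') cell on which the recursion continues.

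The step I expect to be the main obstacle is precisely this last one: producing a cut that is at once roughly balanced and respects divisibility by $(c,d)$. A plain Ham-Sandwich cut of the $cg$ red and $dg$ blue points is balanced in raw counts but, once $c,d>1$, need not split each colour into a multiple of $c$ (resp.\ $d$) — with $c=3,d=1,g=3$ it leaves $4$ red and $1$ blue on a side. Reconciling balance with colour-divisibility is the technical heart of the result; it is exactly what the cited works~\cite{BKS2000,Ito2000,Sakai2002} carry out, and it is what forces the cost of locating each cut to be that of enumerating $k$-sets rather than of finding one Ham-Sandwich line.

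For the running time, the recursion tree has depth $O(\log g)$ because the splits are balanced, and at each level the cutting object is found by enumerating the relevant family of $k$-sets: using the algorithm of Edelsbrunner and Welzl~\cite{EW1986} with the Overmars--van Leeuwen dynamic convex hull~\cite{OL1981} at $O(\log^2 N)$ per update, together with $n(k)=O(Nk^{1/3})$~\cite{D1998} and $k\le N$, a level costs $O(N\log N+n(k)\log^2 N)=O(N^{4/3}\log^2 N)$. Multiplying by the $O(\log g)$ levels gives the claimed $O(N^{4/3}\log^2(N)\log(g))$ with $N=(c+d)g$.
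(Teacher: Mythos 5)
This statement is not proved in the paper: it is imported from~\cite{BKS2000,Ito2000,Sakai2002}, and the only thing the surrounding text argues is the running-time refinement, obtained by replacing the $O(\log^2(N))$-per-update convex-hull structure of Overmars and van Leeuwen inside the algorithm of~\cite{BKS2000} by the $O(\log(N))$-per-update structure of Brodal and Jacob~\cite{BJ02}. Your proposal is therefore a from-scratch proof attempt, and it has a genuine gap at precisely the step you yourself single out as the obstacle. Your splitting lemma asserts that for every configuration there is a \emph{single directed line} $\ell$ and a balanced $g_1$ with $|left(\ell)\cap R|=cg_1$ and $|left(\ell)\cap B|=dg_1$, i.e., that an equitable 2-cutting always exists. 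That is false in general, and it is strictly stronger than what the cited works establish: their conclusion is the disjunction ``either an equitable 2-cutting or an equitable 3-cutting exists'', where a 3-cutting consists of three rays from a common, generally interior, apex producing three convex wedges. This machinery --- reproduced in this very paper as Theorem~\ref{th:3-cut} and Theorem~\ref{th:g-h} --- exists precisely because the rotational intermediate-value argument you invoke (in the spirit of Lemma~\ref{lem:9}) can fail: it requires two directions whose red-counts bracket the target $cg_1$, and there are configurations in which $\varphi(\theta)$ stays strictly on one side of every admissible target for every $\theta$. Your fallback does not repair this: removing a convex wedge of angle less than $\pi$ from a convex cell leaves a region that is in general reflex or even disconnected, so the recursion would not stay within convex cells, and the apex of the needed 3-cutting cannot in general be placed at a convex-hull vertex. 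In effect you have reduced the theorem to its hardest ingredient and then deferred that ingredient back to the papers being cited.

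A secondary issue is the running-time accounting. You price each level only by the enumeration of $k$-sets needed to locate a line cut, whereas the dominant per-level cost in~\cite{BKS2000} is the search for the 3-cutting apex; that search is what yields $O(N^{\frac{4}{3}}\log^2(N))$ per level after the Brodal--Jacob improvement (and $O(N^{\frac{4}{3}}\log^3(N))$ without it). You arrive at the stated bound, but through an analysis that does not correspond to the algorithm whose running time is actually being claimed.
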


Note that, in fact, the running time stated in~\cite{BKS2000} is $O(N^{\frac{4}{3}}\log^3(N)\log(g))$, because the authors used the dynamic algorithm by Overmars and van Leeuwen~\cite{OL1981} tho maintain a convex hull
in $O(\log^2(N))$ time per update. This was later improved by Brodal and Jacob~\cite{BJ02} to $O(\log(N))$ time per update. Therefore, the algorithm from Bespamyatnikh et al.~\cite{BKS2000} is directly improved by a logarithmic factor to $O(N^{\frac{4}{3}}\log^2(N)\log(g))$ time.

\begin{theorem}\label{th:3.4}
If an $(r,b)$-set $S$ is $\{1,3\}$-equitable, then all the points of $S$ can be $K_{1,3}$-covered, and a $K_{1,3}$-covering can be computed in $O(N^{\frac{4}{3}}\log^3(N))$ time where $N=r+b$.
\end{theorem}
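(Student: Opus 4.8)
The plan is to reduce Theorem~\ref{th:3.4} directly to the Equitable Subdivision result (Theorem~\ref{th:C}). Suppose without loss of generality that $r=3b$, so $|S|=4b$, and set $g=b$, $c=3$, $d=1$ in Theorem~\ref{th:C}. This yields a subdivision of the plane into $b$ convex regions, each containing exactly $3$ red points and $1$ blue point. Each such region then supplies precisely the vertex set of one copy of $K_{1,3}$: connect the unique blue point in the region to the three red points in the same region by straight-line segments, obtaining a star centered at the blue point.

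The key point to verify is that the resulting set of $b$ stars has no crossings. This follows because the regions of the subdivision are convex, pairwise interior-disjoint, and cover the plane; hence any two points lying in the same region have the segment joining them contained in that region, so all three segments of a given star stay inside its region. Two segments from stars in different regions therefore lie in disjoint open regions and cannot cross (they could at most touch along a shared boundary edge of the subdivision, but since the points are in general position and in the interiors of the regions, one can choose the subdivision's cuts to avoid all the points, so the segments stay in the open regions and are genuinely disjoint). I would spell out this ``convexity forbids crossings'' argument carefully, as it is the only real content beyond invoking Theorem~\ref{th:C}; it is routine but worth stating since it is what makes the $K_{1,3}$-covering noncrossing.

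For the running time, the subdivision is computed in $O(N^{\frac{4}{3}}\log^2(N)\log(g))$ time by Theorem~\ref{th:C} with $N = r+b = 4b$ and $g = b$; since $\log(g) = O(\log N)$, this is $O(N^{\frac{4}{3}}\log^3(N))$. Once the subdivision is in hand, reading off the four points in each region and drawing the three segments of each star takes only $O(N)$ additional time (for instance, by locating each point in the planar subdivision, or simply from the output of the subdivision algorithm, which naturally reports which points fall in which region). Hence the total time is $O(N^{\frac{4}{3}}\log^3(N))$, as claimed. The symmetric case $b = 3r$ is handled identically with the roles of the colors exchanged, taking $c=1$, $d=3$, $g=r$.

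I do not anticipate a genuine obstacle here: the theorem is essentially a corollary of the Equitable Subdivision theorem, and the main (minor) subtlety is just making the noncrossing claim rigorous by exploiting convexity of the regions and general position. If one wanted to be maximally careful, the one thing to double-check is that the subdivision produced by Theorem~\ref{th:C} can be taken with no point of $S$ on a boundary curve — but this is standard and can be arranged by an infinitesimal perturbation of the cuts, or is already guaranteed by the constructions in~\cite{BKS2000,Ito2000,Sakai2002}.
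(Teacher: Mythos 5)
Your proposal is correct and follows essentially the same route as the paper: invoke the Equitable Subdivision theorem with $c=3$, $d=1$, $g=b$, cover the four points in each convex region trivially, and observe that disjoint convex regions prevent crossings, with the running time coming from $\log(g)=O(\log(N))$. The extra care you take about points on region boundaries is a reasonable refinement but not needed beyond what the paper states.
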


\begin{proof}
Without loss of generality, we can assume that $S$ consists of $3b$ red points and $b$ blue points. We apply directly Theorem~\ref{th:C} by taking $c=3$, $d=1$ and $g=b$, so there exists a subdivision of the plane into $b$ convex regions such that each region contains precisely $3$ red points and $1$ blue point. The points in each region can be trivially $K_{1,3}$-covered and the union of these coverings is a $K_{1,3}$-covering of $S$ without crossings. As finding an equitable subdivision requires $O(N^{\frac{4}{3}}\log^2(N)\log(b))$ time, then building a $K_{1,3}$-covering takes $O(N^{\frac{4}{3}}\log^3(N))$ time.
\end{proof}

\subsection{Linearly separable sets}\label{subsec-2.2}


An $(r,b)$-set $S=R\cup B$ is \emph{linearly separable} if there exists a line $\ell$ that separates $ R $ and $ B $, say $R\subset left(\ell)$ and $B\subset right(\ell)$. For $r=3g+h$ and $b=3h+g$, any linearly separable set admits a $K_{1,3}$-covering, as the following theorem shows.

\begin{theorem}\label{th:3.2}
If $S=R\cup B$ is a linearly separable $(3g+h,3h+g)$-set, then all the points of $S$ can be $K_{1,3}$-covered.
\end{theorem}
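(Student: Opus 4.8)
The plan is to induct on $g+h$ (equivalently, on $|S|/4$), peeling off one star at a time from near the separating line $\ell$ while keeping the remaining red and blue points linearly separable with the right count. Assume without loss of generality that $h \le g$, so that among the four points of the first star we remove we want $3$ red and $1$ blue (a star centered at a blue point); in the base case $g+h=1$ there is a single star and nothing to prove beyond the trivial observation that $K_{1,3}$ on $4$ points with $3$ of one color placed arbitrarily has no self-crossing. The key structural fact to exploit is that after removing a red-heavy star of this type, the new counts $(3g+h-3, 3h+g-1) = (3(g-1)+h, 3h+(g-1))$ again have the form $(3g'+h', 3h'+g')$ with $g'=g-1, h'=h$, so the induction hypothesis applies verbatim once separability is restored.

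First I would set up the peeling step concretely. Direct $\ell$ so that $R \subset left(\ell)$ and $B \subset right(\ell)$; translate $\ell$ toward $B$ (perpendicular to itself) until it first hits a blue point $v$ — this $v$ is the blue point closest to the red side. Now rotate/translate a line through $v$ to cut off exactly $3$ red points: since all of $R$ lies strictly on one side of $\ell$ and $v$ lies on $\ell$, I can take a line $\ell'$ through $v$ with all of $R$ in $left(\ell')$ (it exists because $v$ is an extreme point of $R \cup \{v\}$ in the direction perpendicular to $\ell$) and then rotate $\ell'$ about $v$ until exactly $3$ red points lie in a halfplane bounded by a line through $v$ — more carefully, I want to choose the $3$ red points $\{r_1, r_2, r_3\}$ that are \emph{extreme} with respect to $v$, i.e.\ those encountered first in an angular sweep around $v$ starting from the direction along $\ell$. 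Concretely, sort $R$ angularly as seen from $v$; the first three in this order, $r_1, r_2, r_3$, together with $v$ form a star $K_{1,3}$ centered at $v$ whose three segments $vr_1, vr_2, vr_3$ lie inside the angular wedge at $v$ spanned by $r_1$ and $r_3$, and this wedge contains no other point of $S$. That "empty wedge" property is exactly what guarantees the new star does not cross any edge we will later draw among the remaining points, nor do the three segments cross each other (they share the endpoint $v$).

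The main obstacle — and the step I would spend the most care on — is verifying that after deleting $\{v, r_1, r_2, r_3\}$ the remaining set $S' = (R \setminus \{r_1,r_2,r_3\}) \cup (B \setminus \{v\})$ is still \emph{linearly separable}, so that the induction hypothesis applies and, crucially, so that all of the recursively-constructed edges lie in the halfplane not touched by the wedge at $v$. The clean way to arrange this is to choose the cut line through $v$ to be a genuine separating line for $S'$: I claim there is a directed line $\ell''$ through $v$ with $R \setminus \{r_1,r_2,r_3\} \subset left(\ell'')$, $B \setminus \{v\} \subset right(\ell'')$, and $\{r_1,r_2,r_3\}$ in the closed wedge between $\ell$ and $\ell''$ on the far (right-of-$\ell$, but cut off) side — this needs the three chosen red points to be precisely the ones "between" the two lines, which is why the angular-sweep-from-$v$ choice is the correct one, and one checks that sweeping a line through $v$ from the direction of $\ell$ picks up red points one at a time before it can pick up any blue point (because $v$ was the blue point nearest the red side). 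Once $\ell''$ separates $S'$, the induction gives a non-crossing $K_{1,3}$-covering of $S'$ all drawn within $left(\ell'') \cup right(\ell'')$ arranged around $\ell''$; adding the star at $v$, whose segments live in the thin wedge between $\ell$ and $\ell''$ that is disjoint from the region where the recursive edges sit, yields a non-crossing $K_{1,3}$-covering of all of $S$. The only remaining bookkeeping is the count, which is automatic: we removed one blue-centered star, decrementing $g$ by $1$ and leaving $h$ fixed, preserving the hypothesis $(3g+h, 3h+g)$ throughout, and the recursion terminates when $g+h$ reaches its base value.
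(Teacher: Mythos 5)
Your peeling step has a genuine gap: the ``empty wedge'' at $v$ only guarantees that no \emph{point} of $S$ lies in the angular wedge spanned by $r_1,r_2,r_3$, but it does not prevent an \emph{edge} of the recursively constructed covering of $S'$ from passing through that wedge and crossing a leg of the star at $v$. Concretely, take $v=(0,0)$ as the leftmost blue point, $r_1=(-1,10)$, $r_2=(-2,10.1)$, $r_3=(-3,9.9)$ (the three reds of smallest angle seen from $v$, sweeping from north), a further red point $p=(-1,-10)$, two more reds far to the south-west, and a second blue point $q=(1,20)$; this is a linearly separable $(6,2)$-set, i.e.\ $g=2,h=0$. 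After you remove $\{v,r_1,r_2,r_3\}$, the only $K_{1,3}$-covering of $S'$ is the star centered at $q$, and its leg $qp$ crosses your leg $vr_1$ (they meet at $(-0.2,2)$). The attempted repair via a line $\ell''$ through $v$ does not work either: if $S'$ has blue points at angles just above and just below the direction of $\ell$ as seen from $v$, the only line through $v$ separating the remaining reds from the remaining blues is (a perturbation of) $\ell$ itself, so there is no ``thin wedge between $\ell$ and $\ell''$'' containing $r_1,r_2,r_3$; and in any case the induction hypothesis gives you \emph{some} non-crossing covering of $S'$ with no control whatsoever over which regions its edges traverse. The underlying obstruction is that your four peeled points $\{v,r_1,r_2,r_3\}$ need not be separable from $S'$ by any line (in the example above their convex hull meets the segment $pq$), so no a posteriori separation argument can rescue this choice of star.

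The missing idea is to choose the peeled quadruple so that a single line separates \emph{all four} of its points from the rest of $S$; then the star lives in the convex hull of those four points on one side of the line, the recursive covering lives in the convex hull of the rest on the other side, and crossings are impossible for free. The paper obtains such a quadruple by an intermediate-value argument (Lemma~\ref{lem:9}): sliding a line parallel to $\ell$ one can cut off $4$ blue points, or $4$ red points, and hence by continuity one can cut off $4$ points of which exactly $3$ are blue; this reduces $(3g+h,3h+g)$ to $(3g+(h-1),3(h-1)+g)$ and the induction (on $h$, with the $h=0$ base case handled by the equitable-subdivision Theorem~\ref{th:3.4}) closes. Your bookkeeping of the parameters and the idea of inducting by removing one star are fine; it is the geometric selection of the star that must be strengthened from ``empty wedge at the center'' to ``the whole quadruple is cut off by a line.''
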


\begin{proof}
Without loss of generality, assume $ g>0 $. The proof is by induction on $h$. If $h=0$, then $S$ is a $\{1,3\}$-equitable set, which can be $K_{1,3}$-covered by Theorem~\ref{th:3.4}. Assume $h>0$, and suppose that $R$ and $B$ are separable by the line $\ell$. Then, there are lines $\ell_b$ and $ \ell_r $ parallel to $ \ell $ such that $ \ell_b$ separates $4$ blue points from the rest of $S$ and $\ell_r$ separates $4$ red points from the rest of~$S$. See Figure~\ref{fig:3.22}.

\begin{figure}[ht]
    \centering
    \includegraphics[scale=1]{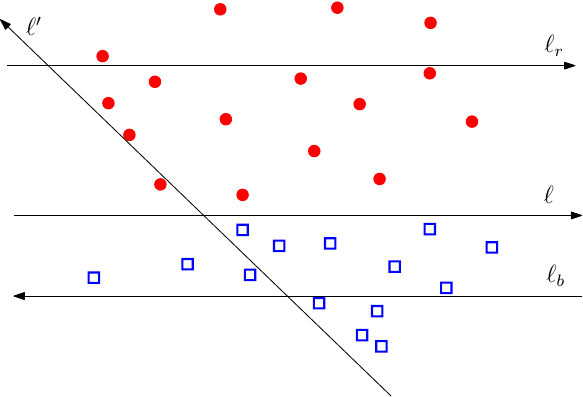}
    \caption{Illustration for the proof of Theorem~\ref{th:3.2}.}\label{fig:3.22}
\end{figure}

For these $\ell_b$ and $ \ell_r $, by Lemma~\ref{lem:9}, there is a line $\ell'$ that separates $4$ points of $S$, exactly~$3$ of them blue, from the rest of $S$. This set of $4$ points can clearly be $K_{1,3}$-covered. The rest of $S$ is a $(3g+(h-1),3(h-1)+g)$-set, still linearly separable, and thus, since $\ell'$ separates the relevant four points from~$S$, it can also be $K_{1,3}$-covered by induction. The two covers are separated by $\ell'$ and, thus, they together form a $K_{1,3} $-cover of $S$.
\end{proof}

Next, we show how to compute a $K_{1,3}$-covering for a linearly separable $(3g+h,3h+g)$-set in $O(N\log(N))$ time, where $N=4(g+h)$. The algorithm first computes $h$ stars centered in red points and then $g$ stars centered in blue points. The algorithm computes~$conv(S)$ and takes the leftmost edge which joins a red point $p_1$ with a blue point $q_1$ (see Figure~\ref{fig:3.2}). Then, it removes $q_1$, computes $conv(S\setminus q_1)$, takes the new leftmost edge $p_2q_2$ connecting the red point $p_2$ to the blue point $q_2$ ($p_2$ can coincide with $p_1$), removes $q_2$ and computes $conv(S\setminus \{q_1,q_2\})$. The new leftmost edge is then $p_3q_3$, being $p_3$ red and $q_3$ blue ($p_3$ can coincide with $p_2$). Observe that the line $\ell'$ supporting the edge $p_3q_3$ separates $q_1$ and $q_2$ from the rest of the points, so the star centered in $p_3$, connecting $p_3$ to $q_1, q_2$ and $q_3$, will not cross any other star formed with the rest of the points. This step finishes by defining the star formed by $p_3$, $q_1$, $q_2$ and $q_3$ and removing $p_3$ and $q_3$. Redefining $R:=R\setminus\{p_3\}$, $B:=B\setminus\{q_1,q_2,q_3\}$, the algorithm proceeds recursively building $h$ stars centered in red points. The $g$ stars centered in blue points are computed in a similar manner.

\begin{figure}[ht]
    \centering
    \includegraphics[scale=1]{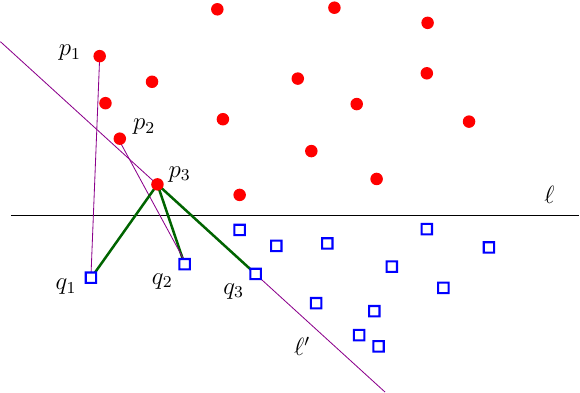}
    \caption{Illustration of the algorithm to build a $K_{1,3}$-covering for a linearly separable set.}\label{fig:3.2}
\end{figure}

The algorithm runs in $O(N\log(N))$ time. Computing $conv(S)$ requires $O(N\log(N))$ time. Besides, in the overall process, the algorithm removes the $N$ points, updating the convex hull after each removal. This process of updating $N$ times the convex hull requires $O(N\log(N))$ time, using the semi-dynamic data structure by Hershberger and Suri~\cite{HS1992}, which allows to process a sequence of $N$ deletions in $O(\log(N))$ amortized time per deletion. Let us see that, when building the stars centered in red points, all leftmost edges can be computed in linear time. Suppose that $p_iq_i$ is the leftmost edge in a generic step and $p_{i+1}q_{i+1}$ is the leftmost edge after removing $q_i$. Computing $p_{i+1}$ can be done by exploring in order the points from $p_i$ to $p_{i+1}$ on the boundary of the convex hull of the current set at this step of the algorithm, which are necessarily red. Moreover, if $p_{i+1}q_{i+1}$ is the leftmost edge after removing $p_i$ and~$q_i$, then $p_{i+1}$ can be computed by exploring again in order the points on the boundary of the convex hull of the current set from the  point preceding~$p_i$ to $p_{i+1}$. Then, computing all the leftmost edges only depends on the number of times that red points are explored by the algorithm: A red point is explored once if it appears on the boundary of the convex hull, another time if it is removed, and several times in the different steps if it belongs to the current leftmost edge or it is the red point previous to a removed red point. As the number of  leftmost edges required to define the $h$ stars centered in red points is the number of removed points ($4h$), then the number of times that red points are explored is $O(N)$. Using a similar reasoning, all the leftmost edges to build the $g$ stars centered in blue points can be computed in $O(N)$ time. Therefore, we have the following theorem.

\begin{theorem}\label{th:3.2b}
If $S=R\cup B$ is a linearly separable $(3g+h,3h+g)$-set, then a $K_{1,3}$-covering can be computed in $O(N\log(N))$ time, where $N=4(g+h)$.
\end{theorem}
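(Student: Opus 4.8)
The plan is to verify the two claims implicit in the statement: that the peeling procedure described above always terminates producing a valid $K_{1,3}$-covering, and that it can be implemented within the stated time bound. Correctness is largely a matter of matching the procedure with the inductive argument behind Theorem~\ref{th:3.2}. First I would fix a direction for the separating line $\ell$, say with $R\subseteq left(\ell)$ and $B\subseteq right(\ell)$, and make ``leftmost edge'' precise: since the active point set is bicolored with both colors nonempty and linearly separable, the boundary of its convex hull splits into one monochromatic red arc and one monochromatic blue arc joined by exactly two bichromatic edges, and the leftmost edge is the one of these two that is extreme in the direction of $\ell$. Linear separability is preserved under deletions, and it also rules out the only degenerate situation -- a single point of one color lying interior to the hull of the other color -- so this description is valid throughout the execution. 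I would then prove the loop invariant: at the beginning of each of the $h$ iterations producing a red-centered star, the active set is a linearly separable $(3g+h',3h'+g)$-set with $h'\ge 1$; and if $q_1,q_2$ denote the blue endpoints of the first two leftmost edges and $p_3q_3$ (with $p_3$ red) is the third leftmost edge after deleting $q_1$ and $q_2$, then the supporting line $\ell'$ of $p_3q_3$ has $q_1$ and $q_2$ strictly on the side opposite to the rest of the active points.

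The heart of the correctness argument is that last separation claim, and here the key geometric fact is monotonicity: deleting the blue endpoint of the current leftmost edge pushes the corresponding bichromatic bridge, together with its supporting line, monotonically in one rotational sense, so once $q_1$ has been swept to the far side of a bridge line it stays there, and likewise for $q_2$. Consequently $\ell'$ has $\{q_1,q_2\}$ on one open side, $p_3$ and $q_3$ on $\ell'$ itself, and every other active point on the other closed side; a slight perturbation of $\ell'$ is then exactly a line of the type produced by Lemma~\ref{lem:9} in the proof of Theorem~\ref{th:3.2}, separating the four points $\{p_3,q_1,q_2,q_3\}$ of the new star from the remaining $(3g+(h'-1),3(h'-1)+g)$-set. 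This simultaneously shows the new star is non-crossing with everything built afterwards and re-establishes the invariant. After the $h$ red-centered iterations the active set is a $\{1,3\}$-equitable linearly separable set with $3g$ red and $g$ blue points, on which the mirror-image procedure (delete red endpoints, centers become blue) produces the $g$ blue-centered stars by the same reasoning; note that no appeal to the equitable-subdivision algorithm of Theorem~\ref{th:3.4} is made, so its running time does not enter the analysis.

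For the running time I would argue as in the discussion preceding the statement. Computing $conv(S)$ takes $O(N\log N)$. The procedure performs exactly $N$ deletions in total; maintaining the convex hull under them with the semi-dynamic structure of Hershberger and Suri~\cite{HS1992} costs $O(\log N)$ amortized per deletion, hence $O(N\log N)$ overall. The only remaining cost is locating the successive leftmost edges, and for this I would use a charging argument: when the leftmost edge moves from $p_iq_i$ to $p_{i+1}q_{i+1}$ after a deletion, $p_{i+1}$ is found by walking along the current hull boundary from $p_i$ (or from the hull-neighbour of a just-deleted red vertex) to $p_{i+1}$ through vertices that are all red, and each step of such a walk can be charged to a hull vertex that is about to be deleted, or to a red point entering or leaving the role of active leftmost red endpoint. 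Since only $O(N)$ deletions occur and the walks advance monotonically along the red chain within each phase, the total walking time is $O(N)$; the blue-centered phase is symmetric. Summing, the algorithm runs in $O(N\log N)$ time.

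The step I expect to require the most care is pinning down the monotonicity of the bridge as blue (resp. red) endpoints are deleted, i.e., checking that re-taking the leftmost edge never moves its supporting line back across a point deleted earlier in the same star. Once this is established, the separation claim -- and with it the reduction to the inductive structure of Theorem~\ref{th:3.2} -- is immediate, and the remaining time analysis is a routine amortized-walk argument over the dynamically maintained convex hull.
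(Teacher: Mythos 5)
Your proposal is correct and follows essentially the same route as the paper: the same peel-off-the-leftmost-bichromatic-edge procedure, the same key separation claim that the supporting line of $p_3q_3$ leaves $q_1,q_2$ on the far side (which the paper merely asserts with ``observe that''), and the same $O(N\log N)$ accounting via the Hershberger--Suri semi-dynamic hull plus an amortized walk along the red (resp.\ blue) chain. You correctly single out the monotonicity of the bridge line as the one point needing care, and it does hold: if $q_1$ were on or below that line, then $p_3q_3$ would be the upper bichromatic hull edge of $S\setminus\{q_2\}$, forcing $q_3=q_1$, a contradiction.
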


\subsection{Convex point sets}\label{subsec-2.1}
In the previous subsections, we have shown some $(3g+h,3h+g)$-sets which always admit a $K_{1,3}$-covering. Now we show that this is not always the case for $(3g+h,3h+g)$-sets in convex position.

\begin{theorem}\label{th:3.1}
If $S=R\cup B$ is a $(3g+h,3h+g)$-set in convex position, then at least $4(g+h)-4$ points of $S$ can be $K_{1,3}$-covered and this bound is tight. 
\end{theorem}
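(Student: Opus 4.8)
The plan is to prove the lower bound and the tightness separately. For the lower bound, I would argue by induction on $g+h$, peeling off one star at a time from the convex polygon. The key observation is that on the boundary of $\mathrm{conv}(S)$ there is always an ``ear'' of the right type: since $r=3g+h$ and $b=3h+g$ with, say, $g+h\ge 2$, some maximal monochromatic run on the boundary together with its neighbours provides four consecutive vertices consisting of $3$ points of one colour and $1$ of the other (or we can find four consecutive points forming a suitable pattern). Connecting the singleton to the three like-coloured points of such a block gives a planar star whose edges stay strictly inside the polygon and cut off a contiguous arc; removing these four vertices leaves a $(3g'+h',3h'+g')$-set in convex position with $g'+h'=g+h-1$, and the remaining cover (obtained by induction) lies on the other side of the chord closing the arc, hence does not cross the star just placed. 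The base case $g+h=1$ is trivial: four points, three of one colour, cover all of them. This yields that all but at most a bounded number of points get covered; to get exactly $4(g+h)-4$ I would instead set up the induction so that it produces a full $K_{1,3}$-covering of $S$ minus at most $4$ specified ``bad'' vertices — i.e. whenever the four consecutive boundary vertices do not have the right colour pattern, discard one vertex (charging it to the deficit of $4$) and recurse, showing the pattern can fail at most a constant number of times before the colour counts force cooperation. The honest way to control this is to track, along the boundary, the quantity (number of blue minus $\tfrac13$ number of red) or similar, and use a discrete intermediate-value argument in the spirit of Lemma~\ref{lem:9} to locate a good arc.

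For tightness I would exhibit an explicit $(3g+h,3h+g)$-set in convex position with $\mathcal{C}(S)=4(g+h)-4$, equivalently $\mathcal{U}(S)\ge 4$. The natural candidate (cf.\ Figure~\ref{fig:1}(1) for the case $g=h=1$, $r=b=4$) is to place the points around a circle so that the colours alternate in blocks chosen to obstruct any planar star decomposition: for instance, arrange the boundary as a cyclic sequence in which every potential star of three like-coloured points necessarily ``wraps around'' a point of the other colour that cannot then be covered without a crossing. Concretely, I expect a pattern such as $R B R B \cdots$ with the counts twisted slightly (one long monochromatic block) works, and that the obstruction is verified by the following planarity fact on convex point sets: if we contract each star to the convex hull of its four vertices, the resulting four-element convex subsets must be pairwise non-crossing, hence \emph{laminar} or \emph{side-by-side} along the circle; one then checks by a counting/parity argument that no such laminar family of $4$-blocks can simultaneously respect the colour pattern unless at least $4$ vertices are omitted.

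The main obstacle I anticipate is the tightness direction — specifically, proving the lower bound $\mathcal{U}(S)\ge 4$ for the extremal configuration. Showing a \emph{construction} covers $4(g+h)-4$ points is easy (peel off $g+h-1$ stars greedily and drop the leftover $4$); showing that \emph{no} covering does better requires a genuine structural argument about non-crossing families of convex $4$-blocks with prescribed colours. I would handle this by first reducing, via the non-crossing (laminar) structure, to a purely combinatorial statement about cyclic colour sequences, and then prove that statement by induction on the number of blocks, tracking the two ``colour-imbalance'' potentials that a valid star consumes; the deficit of $4$ should emerge as twice the minimum unavoidable mismatch at the two ``ends'' where the laminar family must open and close. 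For the lower-bound (algorithmic) direction, the peeling argument is routine once the good-arc lemma is in place, and it simultaneously gives an efficient algorithm: repeatedly computing the convex hull and locating a good ear, for a total of $O(N\log N)$ or $O(N^{4/3}\log N)$ time depending on whether Lemma~\ref{lem:9} is invoked.
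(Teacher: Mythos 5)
Your overall architecture (peel off an ``ear'' of four consecutive hull vertices with the $3{+}1$ colour pattern, recurse, and prove tightness via a non-crossing/parity obstruction) matches the paper's, but there is a genuine gap in the lower-bound half. You assert that a good four-point arc can always be found, or else that one can ``discard one vertex, charge it to the deficit of $4$, and recurse.'' Neither works as stated. First, there exist configurations in which \emph{no} four consecutive hull vertices have the $3{+}1$ pattern anywhere: a counting argument shows these are exactly the two alternating patterns $RBRB\cdots$ and $RRBBRRBB\cdots$ (which force $g=h$ and $|R|=|B|=4g$), so no intermediate-value search ``in the spirit of Lemma~\ref{lem:9}'' can locate a good arc there. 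Second, discarding a single vertex destroys the invariant $(|R|,|B|)=(3g'+h',3h'+g')$ --- the total becomes odd, the remaining set is outside the scope of the statement being proved, and your induction no longer closes; you would need a strictly stronger induction hypothesis about arbitrary bicoloured convex sets, which you have not formulated. The paper instead handles the two exceptional configurations directly, by an explicit covering scheme (take the points numbered $3k-2,3k-1,3k$ together with $n-k$, resp.\ $n-k-1$, for $k=1,\dots,2g-1$) that covers all but exactly four points; some such explicit construction is the missing ingredient in your argument.

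On tightness you anticipate the wrong difficulty: no laminar-family analysis is needed. For the alternating configuration, the three edges of any star centred at a red point cut the convex polygon into four arcs, and at least one arc contains an odd number of points (or, in the $RRBB$ pattern, $2j$ red and $2j+2$ blue points, a count incompatible with a partition into $3{+}1$ blocks), so that arc cannot be covered; hence $\mathcal{U}(S)>0$, and since $\mathcal{U}(S)\equiv|S|\equiv 0\pmod 4$ this already gives $\mathcal{U}(S)\ge 4$. Your candidate extremal example is essentially the right one, but you should commit to the exact alternating configuration and replace the proposed block-laminarity induction with this short parity argument.
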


\begin{proof}
The proof is again by induction on $|R\cup B|$. If $g=0$ or $h=0$, which includes the base case $|R\cup B|=4$, then $S$ is a $\{1,3\}$-equitable set, which can be $K_{1,3}$-covered by Theorem~\ref{th:3.4}. Thus, we assume $g,h>0$, $|R\cup B|\ge 8$ and the elements of $R\cup B$ are ordered clockwise along (the boundary of) $\operatorname{conv}(R\cup B)$.

\begin{itemize}
\item[(i)] Suppose that there exists a set $X\subset S$ of four consecutive points cyclically on $\operatorname{conv}(S)$ such that 3 of them have the same color and the remaining one has a distinct color. Then, the four points of $X$ can be $K_{1,3}$-covered, and $4(g+h-1)-4$ points of $(R\cup B)\setminus X$, which is either a $(3g+(h-1),3(h-1)+g)$-set or a $(3(g-1)+h,3h+(g-1))$-set, can be $K_{1,3}$-covered because of the induction hypothesis. Note that, by convexity, these two $K_{1,3}$-coverings do not cross, so the desired covering is obtained.

\item[(ii)] Suppose that such a set $X$ does not exist. By a simple counting argument, this implies one of these two cases: One red point and one blue point alternately lie on $\operatorname{conv}(S)$, or two red points and two blue points alternately lie on $\operatorname{conv}(S)$ (see Figure~\ref{fig:3.1}). Therefore, in both cases, $|R|=|B|$, $g=h$, $|R|=4g=|B|$, and $|S|=|R|+|B|=8g$. Assume that the $n=8g$ points of $S$ are numbered from $1$ to~$n$ clockwise around $\operatorname{conv}(S)$.

\begin{itemize}
\item[(1)] If one red point and one blue point alternate on $\operatorname{conv}(S)$, we obtain the desired $K_{1,3}$-covering as follows: For $k=1,\dots,2g-1$, we take the points with numbers $3k-2$, $3k-1$, $3k$, and $n-k$ and cover them with a $K_{1,3}$ (see Figure~\ref{fig:3.1}, left). Doing this, we leave uncovered precisely the four points numbered $6g-2$, $6g-1$, $6g$, and $n$.
\item[(2)] If two red points and two blue points alternate on $\operatorname{conv}(S)$, we can obtain the desired $K_{1,3}$-covering as follows: For $k=1,\dots,2g-1$, we take the points with numbers $3k-2$, $3k-1$, $3k$, and $n-k-1$ and cover them with a $K_{1,3}$ (see Figure~\ref{fig:3.1}, right). Doing this, we leave uncovered precisely the four points numbered $6g-2$, $6g-1$, $n-1$, and $n$.
\end{itemize}
\end{itemize}

\begin{figure}[htb]
\begin{center}
\includegraphics[scale=1]{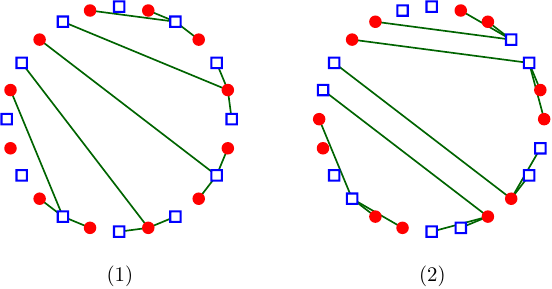}
\end{center}
\caption{Two configurations of points in convex position not admitting a $K_{1,3}$-covering.}\label{fig:3.1}
\end{figure}

The fact that the bound $4(g+h)-4$ is the best possible comes from the two previous particular configurations in~(1) and~(2).
In any full covering of a (monochromatic) convex point set, there is always a star that covers four consecutive points. However, in either of the configurations in~(1) and~(2), among any four consecutive points two are red and two are blue, hence they cannot be $K_{1,3}$-covered.
\end{proof}

We can cover at least $4(g+h)-4$ points in linear time as follows: Explore the points in order along (the boundary of) $conv(S)$, looking for four consecutive points, say $i, i+1, i+2$, and $i+3$, three of one color and one of the other color. If such a set exists, then define a star with these four points, remove them, and continue the process from point $i-3$, checking points $i-3$, $i-2$, $i-1$, and $i+4$. If such a set does not exist, then either one red point and one blue point alternately lie on $conv(S)$ or two red points and two blue points alternately lie on $conv(S)$, and the covering is built as shown in Figure~\ref{fig:3.1}. If, during the process, $h$ (resp. $g$) stars centered in red (resp. blue) points are defined, then the resulting configuration after removing all the stars defined is a $\{1,3\}$-equitable set formed by $3g'$ red points and $g'$ blue points ($3h'$ blue points and $h'$ red points). In this case, the process continues looking only for subsets $X$ consisting of four consecutive points, three of them red and the other blue (resp. three blue and one red if the equitable set is formed by $3h'$ blue points and $h'$ red points). Such subsets $X$ always exist using the following observation that easily follows from a simple counting argument.

\begin{observation}\label{obs:fourelements}
Let $(a_1, a_2, \ldots , a_n)$ be a cyclic sequence consisting of $b$ blue elements and $3b$ red elements. Then, there exist four consecutive elements in the sequence such that three of them are red and the other is blue.
\end{observation}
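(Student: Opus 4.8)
The plan is to prove Observation~\ref{obs:fourelements} by a counting argument on the ``windows'' of four consecutive elements of the cyclic sequence. First I would set up notation: for each $i \in \{1, \dots, n\}$ (indices taken cyclically modulo $n$, where $n = 4b$), let $W_i = (a_i, a_{i+1}, a_{i+2}, a_{i+3})$ be the window of four consecutive elements starting at position $i$, and let $f(i)$ denote the number of red elements in $W_i$. The goal is to show that $f(i) = 3$ for some $i$. Suppose, for contradiction, that $f(i) \neq 3$ for every $i$.

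The key step is to study how $f$ changes as $i$ advances by one: $f(i+1) - f(i) \in \{-1, 0, +1\}$, since moving the window one step to the right drops $a_i$ and adds $a_{i+4}$, so $f(i+1) - f(i)$ equals $1$ if $a_i$ is blue and $a_{i+4}$ is red, $-1$ if $a_i$ is red and $a_{i+4}$ is blue, and $0$ otherwise. Now I would compute the average value of $f$ over a full cycle: since each red element is counted in exactly four windows, $\sum_{i=1}^{n} f(i) = 4 \cdot 3b = 12b = 3n$, so the average of $f$ is exactly $3$. If $f$ never equals $3$, then by the discrete intermediate value property (since $f$ changes by at most $1$ at each step and $f$ is defined on a cycle), $f$ must stay entirely on one side of $3$: either $f(i) \le 2$ for all $i$, or $f(i) \ge 4$ for all $i$. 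Either way the average cannot equal $3$, a contradiction. Hence some window has exactly three red elements (and therefore exactly one blue element).

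I would also remark that the same argument gives the companion statement needed in the main algorithm, namely that a cyclic sequence of $h$ red and $3h$ blue elements contains four consecutive elements with three blue and one red, simply by swapping the roles of the colors. The argument is completely elementary; the only point requiring a little care is the discrete intermediate value property on a cyclic sequence, which holds precisely because consecutive values of $f$ differ by at most one and the index set is connected cyclically, so $f$ cannot ``jump over'' the value $3$ without hitting it.

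I do not anticipate a serious obstacle here: the entire content is the averaging identity $\sum_i f(i) = 3n$ combined with the step-size-one property. The only thing to double-check is the edge case $b = 0$ (vacuously nothing to prove, or rather the hypothesis forces $n = 0$), and the fact that for $b \ge 1$ the sequence genuinely has both colors present so that $f$ is not identically $3$ trivially — but that is not needed, since if $f \equiv 3$ we are already done. This makes the proof robust and short.
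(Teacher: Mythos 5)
Your argument is correct. Note that the paper does not actually write out a proof of this observation; it only asserts that it ``easily follows from a simple counting argument,'' so any correct elementary argument is acceptable here. Your instantiation --- summing the red-count $f(i)$ over all $n=4b$ cyclic windows of length $4$ to get average exactly $3$, observing that $f$ changes by at most $1$ per unit shift, and concluding by a discrete intermediate value argument that $f$ must attain the value $3$ (else it stays entirely on one side of $3$ and the average cannot be $3$) --- is sound, and the edge case $b=0$ is handled. For comparison, a slightly more direct counting route, which is plausibly what the authors had in mind, is to look at maximal monochromatic runs: in the cyclic sequence the maximal red runs and maximal blue runs alternate, so there are $k\le b$ maximal red runs sharing $3b$ red elements, hence some red run has length at least $3$; taking the last three elements of that run together with the (necessarily blue) element that follows gives the desired window. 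Both arguments are elementary and short; yours has the minor advantage of generalizing immediately to statements about windows of other lengths and average compositions, while the run-based one avoids the intermediate value step entirely. Your closing remark that the color-swapped version (three blue, one red, for a sequence with $h$ red and $3h$ blue) follows by symmetry matches exactly how the observation is invoked in the algorithm for convex and double-chain point sets.
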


In addition to the two special configurations shown in Figure~\ref{fig:3.1} not admitting a $K_{1,3}$-covering, there are many more. For instance, take the configuration where one red point and one blue point alternately lie on $\operatorname{conv}(S)$ and add four points in the order red, blue, red, red between a blue point and a red point. The new configuration does not admit a $K_{1,3}$-covering, and this operation can be repeated several times.

\begin{figure}[ht]
\begin{center}
\includegraphics[scale=1]{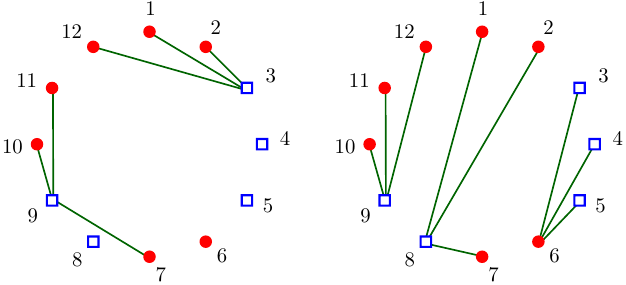}
\end{center}
\caption{If a star is defined using points $12, 1, 2$ and $3$ (left), then it is not possible to cover the rest of the points. But if the star is defined using points $9,10,11$ and $12$ (right), then the rest of the points can be covered.}\label{fig:3.1a}
\end{figure}

On the other hand, given a bicolored convex point set, depending on the order in which the stars are computed using the previous algorithm, sometimes a $K_{1,3}$-covering is found and sometimes not. See Figure~\ref{fig:3.1a}. This leads to the problem of deciding if a bicolored convex point set admits a $K_{1,3}$-covering or not. Using a standard dynamic programming algorithm, this decision problem can be solved in $O(n^3)$ time.
Assume that the points are numbered clockwise from 1 to $n$. For an interval $[i,j]$ of consecutive points (mod $n$), define $c(i, j, u_r, u_b)$, for non-negative integers $u_r$ and $u_b$ such that $u_r+u_b \leq 2$, to be $true$ if we can $K_{1,3}$-cover the points from $i$ to $j$ leaving $u_r$ red and $u_b$ blue points from $[i, j]$ uncovered and in such a way that no edge from the $K_{1,3}$-covering crosses any segment between one of those $u_r+u_b$ uncovered points and a point outside~$[i,j]$.

To calculate $c(i, j, u_r, u_b)$ we need to consider the following possibilities:
\begin{enumerate}
\item The point~$i$ is left uncovered. Then, $c(i, j, u_r, u_b) = c(i + 1, j, u_r-1, u_b)$, if the point $i$ is red and $u_r > 0$, or $c(i, j, u_r, u_b) = c(i+1, j, u_r, u_b-1)$, if the point $i$ is blue and $u_b > 0$.
\item The point $i$ is part of a $K_{1,3}$-star contained in~$[i, j]$. Let $k$ be the last vertex of that star when going from $i$ to $j$ in the direction of increasing indexes. We will connect $i$ with $k$, and ask for a solution of the subproblem from $i+1$ to $k-1$ which leaves two of the vertices free to be connected with points~$i$ and~$k$.
\begin{enumerate}
    \item If we take the point $i$ or the point $k$ to be the center of the star, then
\[
c(i, j, u_r, u_b) =
\!\!\!\!\!\!\!\!\!\!\!\!\!\!\!\!
\bigvee_{\small
\begin{array}{c}k=i+3\\\mbox{\small
$i,k$ different colors}\end{array}
}^{j-(u_r+u_b)}
\!\!\!\!\!\!\!\!\!\!\!\!\!\!\!\!
\big(
c(i+1, k-1, 2, 0)
\lor
c(i+1, k-1, 0, 2)
\big) \land c(k+1, j, u_r, u_b)
\]
\item If we take both points $i$ and $k$ to be  leaves of the star, then
\[
c(i, j, u_r, u_b) =
\bigvee_{\small
\begin{array}{c}k=i+3\\\mbox{\small $i,k$ same color}\end{array}
}^{j-(u_r+u_b)}
c(i+1, k-1, 1, 1)
\land c(k+1, j, u_r, u_b)
\]
\end{enumerate}
\end{enumerate}

There are $O(n^2)$ subproblems in total, and solving each of them takes $O(n)$ time, yielding an $O(n^3)$ algorithm. Therefore, we have the following theorem.


\begin{theorem}\label{th:algorithmconvex}
If $S=R\cup B$ is a $(3g+h,3h+g)$-set in convex position, deciding if $S$ admits a $K_{1,3}$-covering can be done in $O(n^3)$ time and $O(n^2)$ space. Moreover, at least $4(g+h)-4$ points of $S$ can be $K_{1,3}$-covered in linear time.
\end{theorem}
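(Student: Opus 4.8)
The plan is to establish the two assertions separately: a correct dynamic program for the decision problem running in $O(n^5)$ time and $O(n^2)$ space, and a linear-time algorithm that always covers at least $4(g+h)-4$ points.

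For the decision problem I would formalize the recurrence sketched before the statement. Fix a clockwise numbering $1,\dots,n$ of the points around $\operatorname{conv}(S)$ and, for every cyclic interval $[i,j]$ of consecutive points, let $c(i,j)$ be the Boolean value that is true exactly when the subset of $S$ indexed by $[i,j]$ admits a $K_{1,3}$-covering; in particular $c(i,j)$ is false whenever $|[i,j]|$ is not a multiple of $4$, and an empty interval is true by convention. The structural fact that makes the recurrence correct is that, in any non-crossing $K_{1,3}$-covering of $[i,j]$, the star containing the endpoint $i$ has some center and three leaves, and because the four points of that star are in convex position its three edges together with $\operatorname{conv}(S)$ cut $[i,j]$ into at most four contiguous blocks; no edge of the covering can cross from one block to another or cross a star edge, so the coverings of the four blocks are independent and their endpoints are precisely the gaps between $i$ and the three chosen star-points. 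Enumerating the triple $i<j_1<j_2<j_3\le j$ that, together with $i$, forms a valid star (a constant-time check that these four indices carry three points of one color and one of the other) and recursing on the four gaps $[i+1,j_1-1]$, $[j_1+1,j_2-1]$, $[j_2+1,j_3-1]$, $[j_3+1,j-1]$ yields exactly the stated recurrence. Correctness then follows by induction on $|[i,j]|$ in both directions: a covering produces such a decomposition, and conversely any such decomposition reassembles into a covering since the blocks are mutually separated by the star through $i$. Filling the $O(n^2)$ entries in increasing order of interval length, at a cost of $O(n^3)$ per entry for the triple enumeration, gives $O(n^5)$ time and $O(n^2)$ space; $S$ is coverable iff $c(1,n)$ is true, and the covering itself is recovered by a standard traversal of back-pointers.

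For the linear-time lower bound I would turn the proof of Theorem~\ref{th:3.1} into a procedure. Maintain the active point set as a doubly linked cyclic list and scan it looking for four consecutive vertices $i,i+1,i+2,i+3$ that are not monochromatic and whose minority color occurs exactly once. Each time such a quadruple is found, make it a star, delete its four points, and resume scanning from position $i-3$; this is exactly induction step (i) of Theorem~\ref{th:3.1}, and by convexity the stars produced so far never cross later ones. If the scan ever completes without finding such a quadruple, then the counting argument of case (ii) of that proof shows the current set is $1$-to-$1$ or $2$-to-$2$ alternating, and we finish with the two explicit constructions of Figure~\ref{fig:3.1}, leaving only four points uncovered. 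If before that happens the scan has already output $h$ red-centered and $g$ blue-centered stars, the remainder is a $\{1,3\}$-equitable set, and Observation~\ref{obs:fourelements} guarantees that suitable quadruples keep existing until the set is exhausted. Correctness (at most four uncovered points) is immediate from Theorem~\ref{th:3.1}; for the running time, a charging argument shows each point is touched $O(1)$ times overall --- when it first enters the scan, when it is deleted, and when it is the point immediately preceding a deleted point --- so the procedure is $O(n)$ after the $O(n\log n)$ (or $O(n)$, if convex position is given) convex-hull computation.

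The main obstacle is not the complexity bookkeeping but the two correctness claims. For the dynamic program it is the separation lemma: one must verify carefully that the star through the endpoint $i$ genuinely splits the remaining points into the four advertised contiguous blocks and that an edge of a covering of one block can never cross an edge of another block or of that star --- this is where convex position is essential. For the linear-time algorithm the delicate point is the amortized analysis of the back-up step ``resume from $i-3$'': one must check that the re-examined points telescope, so that a single deletion triggers only a bounded amount of re-scanning and repeated back-ups cannot push the total above linear.
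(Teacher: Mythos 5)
Your proposal follows the paper's own argument essentially verbatim: the same interval dynamic program $c(i,j)$ with the star through the left endpoint splitting $[i,j]$ into four independent sub-intervals ($O(n^2)$ states, $O(n^3)$ per state), and the same linear-time greedy scan for a non-monochromatic quadruple with back-up to $i-3$, falling back on the two alternating configurations of Theorem~\ref{th:3.1} and on Observation~\ref{obs:fourelements} once one of the star quotas is exhausted and the remainder becomes $\{1,3\}$-equitable (note only that this happens when $h$ red-centered \emph{or} $g$ blue-centered stars have been produced, not both). The separation lemma and the amortized charging argument you single out are exactly the points the paper relies on (the latter is spelled out in the analysis of the linearly separable case), so the proposal is correct and adds only explicit justification of steps the paper leaves implicit.
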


\subsection{Double chain}\label{subsec-2.3}

In this section we show that, by combining two bicolored convex point sets in a special way, it becomes possible to cover all points. A \emph{double chain}~\cite{GNT2000} is formed by two (non-empty) convex chains $C_1$ and $C_2$ such that every edge $(p,q)$ with $p\in C_1$ and $q\in C_2$ does not cross neither $conv(C_1)$ nor $conv(C_2)$. See Figure~\ref{fig:2chain-1}.

Points in $C_1$ are denoted by $p_1,\ldots,p_s$ counter-clockwise along $conv(C_1)$ and points in~$C_2$ are denoted by $q_1,\ldots,q_t$ counter-clockwise along $conv(C_2)$. Let us denote by $circ(C_1)$, $circ(C_2)$, and $circ(C_1\cup C_2)$ the circular sequences $p_1,\ldots,p_s$; $q_1,\ldots,q_t$; and $p_1,\ldots,p_s,q_1,\ldots,q_t$, respectively. For a bicolored double chain, there is always a $K_{1,3}$-covering, as the following theorem shows.

\begin{figure}[ht]
    \centering
    \includegraphics[scale=1]{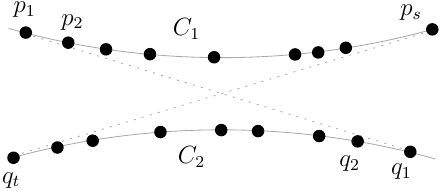}
    \caption{A double chain.}
    \label{fig:2chain-1}
\end{figure}

\begin{theorem}\label{th:3.3}
If $R\cup B$ is a double chain, with $|R|=3g+h$ and $|B|=g+3h$ for some integers $g,h\ge 0$, then $R\cup B$ can be $K_{1,3}$-covered.
\end{theorem}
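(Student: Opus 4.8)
The plan is to proceed by induction on $|R\cup B|=4(g+h)$, in the same peeling style as the proofs of Theorems~\ref{th:3.2} and~\ref{th:3.1}. If $g=0$ or $h=0$ then $R\cup B$ is $\{1,3\}$-equitable and Theorem~\ref{th:3.4} finishes the job (this also settles the base case $g+h\le 1$), so assume $g,h\ge 1$. The inductive step is reduced to the following: find a line $\ell$ such that $X:=left(\ell)\cap(R\cup B)$ is either a single $K_{1,3}$ (that is, $|X|=4$ with three points of one color and one of the other) or itself a bicolored double chain whose red/blue counts have the form $(3g_X+h_X,\,g_X+3h_X)$ with $0\le g_X\le g$ and $0\le h_X\le h$. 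Indeed, the complement $right(\ell)\cap(R\cup B)$ is again a double chain: the line $\ell$ cuts each of $C_1$ and $C_2$ into a contiguous, hence convex, subchain, the property that a segment from $C_1$ to $C_2$ meets neither $conv(C_1)$ nor $conv(C_2)$ is inherited, and $\ell$ can be chosen so that neither subchain is emptied; moreover its red/blue counts are of the required form precisely because $g_X\le g$ and $h_X\le h$. Hence both $X$ and its complement can be $K_{1,3}$-covered by the induction hypothesis (for $X$, directly if it is a single star), and since these two coverings lie in the two open half-planes bounded by $\ell$, they do not cross.

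So everything reduces to producing such an $X$ whenever $g,h\ge 1$. A double chain has exactly two ``ends'', each being a place where an extreme point of $C_1$ lies next to an extreme point of $C_2$; near either end, a line cuts off a set formed by a run of points of $C_1$ starting at that end together with a run of points of $C_2$ starting at the same end, of any two prescribed lengths that are not too large. I would then run an intermediate-value / staircase argument in the spirit of Lemma~\ref{lem:9}: advancing the cutting line into a fixed end one point at a time, the number of red points among the first $\ell$ points swept climbs from $0$ to $3g+h$ in steps of $0$ or $1$; evaluating at $\ell=4k$, and using that the value $3g+h$ already lies in the ``admissible window'' $\max(k,3k-2h)\le r\le\min(3k,2g+k)$ for $k=g+h$ while the sequence is non-decreasing and the window's endpoints are monotone in $k$, one gets that the count at some $\ell=4k$ with $k<g+h$ also lies in the window with the right parity. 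Choosing in addition the split of these $4k$ points between $C_1$ and $C_2$ so that both runs are nonempty (so $X$ is a genuine double chain rather than a one-sided convex set, for which Theorem~\ref{th:3.1} would only be strong enough when $X$ happens to be equitable) and so that the complement also keeps both chains nonempty, one obtains the required $X$.

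The part I expect to be the main obstacle is exactly this last selection: simultaneously enforcing feasibility of the count, the bounds $g_X\le g$ and $h_X\le h$, the straddling condition, and non-emptiness of the complement, and then dealing with the finitely many boundary situations in which one of $C_1,C_2$ has fewer than, say, five points. In the worst degenerate configurations — the double-chain analogues of the alternating convex sets in the proof of Theorem~\ref{th:3.1}, where no short feasible $X$ is available — one presumably has to peel a slightly larger straddling block (of $8$ or $12$ points) with a feasible count, or else give an explicit covering that routes one or two additional stars through the region between $C_1$ and $C_2$, thereby absorbing exactly the four points a purely convex argument is forced to leave uncovered. Once this combinatorial selection lemma is established, the remainder is a routine induction, which should also translate into a polynomial-time algorithm.
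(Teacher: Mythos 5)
Your induction skeleton and base case match the paper's, and you have correctly located the crux: the color-alternating configurations that defeat the convex-position argument of Theorem~\ref{th:3.1} must be handled by a star that spans the gap between $C_1$ and $C_2$. But that crux is exactly what your proposal does not prove. The intermediate-value machinery you set up does not get past it: for a block of $4k$ swept points to be a $(3g_X+h_X,\,g_X+3h_X)$-set, its red count $r_X$ must satisfy not only $k\le r_X\le 3k$ but also the parity condition $r_X\equiv k\pmod 2$ (since $g_X=(r_X-k)/2$ and $h_X=(3k-r_X)/2$), and in a $1$--$1$ color-alternating circular sequence every block of $4k$ consecutive points has $r_X=2k$, which violates the parity condition for every odd $k$. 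This is the same obstruction that forces four uncovered points in Theorem~\ref{th:3.1}, so a staircase argument alone cannot succeed; moreover, when $|C_1|$ is very small your requirement that both $X$ and its complement keep both chains nonempty cannot be met at all. Your two escape routes --- peeling a larger straddling block, or ``an explicit covering that routes one or two additional stars through the region between $C_1$ and $C_2$'' --- are announced with a ``presumably'' and never carried out, yet they are the entire content of the hard case.

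For comparison, the paper resolves this case combinatorially rather than with cutting lines: if some four consecutive points of $circ(C_2)$ consist of three of one color and one of the other, peel them off (your easy case); otherwise $circ(C_2)$ either alternates or is monochromatic, and for each of the six possible colorings of $q_t,q_{t-1},q_{t-2}$ (respectively, the two monochromatic subcases) one exhibits a concrete four-point set $Z$ containing $p_1$ and three points of $C_2$ chosen from $\{q_t,q_{t-1},q_{t-2},q_1,q_2\}$ whose covering star is disjoint from every segment spanned by $(R\cup B)\setminus Z$, after which induction applies to the remaining double chain. Until you supply an explicit construction of this kind (or a correct and fully proved selection lemma replacing it), the proof is incomplete at its essential step.
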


\begin{proof}
We prove the theorem by induction on $|R\cup B|$, so that in any inductive step we guarantee that $R\cup B$ is a double chain. If $g=0$ or $h=0$, which includes the case $|R\cup B|=4$, then $R\cup B$ is a $\{1,3\}$-equitable set, which can be $K_{1,3}$-covered by Theorem~\ref{th:3.4}. Then, assume $g,h>0$, $|R\cup B|\ge 8$, and without loss of generality\ $0<|C_1|\le |C_2|$ and $p_1$ is red.

As $|R\cup B|\ge 8$, then $|C_2| \ge 4$. If $circ(C_2)$ has a set $X$ consisting of $4$ consecutive points, 3 of them having the same color and the remaining one having a distinct color, then $X$ can be $K_{1,3}$-covered and the set $(R\cup B)\setminus X$ is a double chain with four points less (except if $|C_2| = 4$) which can also be $K_{1,3}$-covered by the induction hypothesis. Note from the definition of double chain that any edge in the covering of $(R\cup B)\setminus X$ is disjoint from any edge in the covering of $X$, thus $R\cup B$ can be $K_{1,3}$-covered. If $|C_2| = 4$ and $circ(C_2)$ has such a set $X$, then $C_1$ has exactly four points, three having the same color and the fourth having the other color, and these points can be $K_{1,3}$-covered without crossing $conv(C_2)$.

\begin{figure}[ht]
    \centering
    \includegraphics[width=\textwidth]{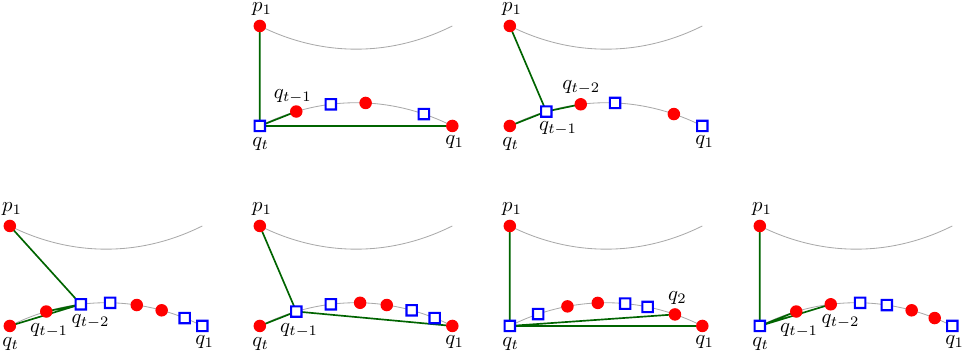}
    \caption{Constructing a $K_{1,3}$-covering for the first point of $C_1$ and $3$ consecutive
    points of $C_2$, including the last point of $C_2$.}
    \label{fig:2chain-3}
\end{figure}

Assume, to the contrary, that $circ(C_2)$ does not have any such set $X$. Then, either the colors of the points in $circ(C_2)$ alternate as described in the proof of Theorem~\ref{th:3.1} (see also Figure~\ref{fig:3.1}) or $C_2$ consists of points of the same color. If the colors alternate in $circ(C_2)$, there are six possible ways of coloring $\{q_t,q_{t-1},q_{t-2}\}$ in red and blue, namely: $(red,red,blue)$, $(red,blue,red)$, $(red,blue,blue)$, $(blue,blue,red)$, $(blue,red,blue)$ and $(blue,red,red)$, see Figure~\ref{fig:2chain-3}. Moreover, $|C_2|$ is even, implying that $|C_1|\ge 2$ since $|R\cup B|=|C_1|+|C_2|$ is also even. In any case, it is always possible to ensure that one point set, denoted by $Z$, among $\{p_1,q_t,q_{t-1},q_{t-2}\}$, $\{p_1,q_t,q_{t-1},q_1\}$, and $\{p_1,q_t,q_1,q_2\}$, can be $K_{1,3}$-covered and satisfies that any segment connecting points in $(R\cup B)\setminus Z$ and any edge of the covering are disjoint. Figure~\ref{fig:2chain-3} shows the $K_{1,3}$-coverings for the different sets~$Z$. Then, we can apply induction on $(R\cup B)\setminus Z$ to obtain a $K_{1,3}$-covering for $R\cup B$.

Finally, assume that $C_2$ consists of points of the same color. As $g,h >0$, then $|R|\ge 4$ and $|B|\ge 4$, so $|C_1|\ge 4$ because $C_2$ is monochromatic. Suppose first that $C_2$ is red, so there are at least four blue points in $C_1$. If $p_i$ is the first blue point in $circ(C_1\cup C_2)$, starting the exploration at $p_1$, then the three points appearing in $circ(C_1\cup C_2)$ before $p_i$ are red, so the set $X$ consisting of these four points can be $K_{1,3}$-covered. Note that $(R\cup B)\setminus X$ is a double chain and that the $K_{1,3}$-covering of $X$ does not cross any $K_{1,3}$-covering of $(R\cup B)\setminus X$ which exists by induction. Suppose now that $C_2$ is blue. As $p_1$ is red, then $X=\{p_1,q_t,q_{t-1},q_{t-2}\}$ can be $K_{1,3}$-covered without crossing a $K_{1,3}$-covering of the double chain $(R\cup B)\setminus X$ that exists by induction.
\end{proof}

From an algorithmic point of view, one can design a linear time algorithm to find a $K_{1,3}$-covering for a double chain as follows. We omit the details and give only a general idea on how the algorithm works. Let $R'$ and $B'$ be the sets of red and blue points forming a double chain in a generic step of the algorithm, with $|R|=3g'+h'$, $|B|=g'+3h'$, $g'\le g$ and $h'\le h$. The algorithm controls if either $g'=0$ or $h'=0$. If this is the case, then the double chain is a $\{1,3\}$-equitable set. By Observation \ref{obs:fourelements}, $circ(C_1\cup C_2)$ necessarily contains a set $X$ consisting of 4 consecutive points, three of them red (resp. blue) and the other one blue (resp. red). Therefore, the linear algorithm described in the convex case can be easily adapted to the circular sequence $circ(C_1\cup C_2)$ to find a $K_{1,3}$-covering for $R'\cup B'$.

From the beginning, the algorithm runs twice the linear time algorithm described for the convex case, once to control a possible set $X$ of 4 consecutive points, three of one color and one of the other color, in $C'_1$ and a second time to control such a set $X$ in $C'_2$. One of these two algorithms is activated in each step, depending on the sizes of $C'_1$ and $C'_2$. If such a set $X$ does not exist, then the algorithm has to apply two different procedures: One to define the different stars when $C'_2$ is monochromatic (as described in the proof of Theorem \ref{th:3.3}) and the other one to define the stars when the points in $C'_2$ alternate (looking for stars involving $p'_1$ and $q'_t$ as described in Figure~\ref{fig:2chain-3}).

\subsection{Some $(r,b)$-sets with $\mathcal{U}(r,b)\ge5$}\label{subsec-2.5}

In this section we give some particular configurations of $(r,b)$-sets for which $\mathcal{U}(r,b)\ge 5$ and prove that, except for $\{1,3\}$-equitable sets, $\mathcal{U}(r,b)>0$ for any values of $r$ and $b$. Recall that $\mathcal{U}(r,b)$ is the maximum of $\mathcal{U}(S)$ over all $(r,b)$-sets $S$ in general position, being $\mathcal{U}(S)$ the minimum number of points in $S$ that cannot be $K_{1,3}$-covered.

\begin{theorem}\label{lem:PerfectCoverings}
Let $ r $ and $ b $ be nonnegative integers.
\begin{itemize}
\item [(a)] $\mathcal{U}(r,b)= 0$ if, and only if, $r=3b$ or $b=3r$.


\item [(b)] $\mathcal{U}(2k+1,2k)\geq 5$ for any $ k\geq 4 $.
\end{itemize}
\end{theorem}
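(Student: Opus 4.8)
Part (a) is the easy direction once Theorem \ref{th:3.4} is available. If $r=3b$ or $b=3r$ then $S$ is $\{1,3\}$-equitable, so $\mathcal{U}(r,b)=0$ by Theorem \ref{th:3.4}. Conversely, suppose $\mathcal{U}(r,b)=0$, i.e.\ every $(r,b)$-set can be fully $K_{1,3}$-covered. A full covering partitions $R\cup B$ into $g$ stars centered at a blue point (each consuming $3$ red and $1$ blue) and $h$ stars centered at a red point (each consuming $1$ red and $3$ blue), so $r=3g+h$, $b=g+3h$, and in particular $4\mid r+b$. Hence if $r+b$ is not a multiple of $4$ we are done trivially, and if it is, we must rule out the cases $g,h>0$. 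For this I would exhibit, for every admissible pair $(r,b)=(3g+h,\,g+3h)$ with $g,h>0$, a single $(r,b)$-set that cannot be fully covered; the natural candidate is a point set in convex position together with the tightness part of Theorem \ref{th:3.1}, which already shows $\mathcal{U}(S)\ge 4>0$ for the alternating configurations there whenever $g=h>0$. For the remaining pairs with $g\neq h$, $g,h>0$, I would adapt the same convex obstruction: the "add four points red,blue,red,red" operation described after Theorem \ref{th:3.1} lets one change $(g,h)$ while preserving non-coverability, so starting from an alternating $g=h$ example and inserting such blocks reaches every pair with $g,h>0$. This reduces (a) to a bookkeeping argument over which $(g,h)$ are reachable, and the counting is routine.

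Part (b) is the substantive one: for $k\ge 4$ we must build a single $(2k+1,2k)$-set $S$ with $\mathcal{U}(S)\ge 5$, i.e.\ in any $K_{1,3}$-covering of $S$ at least five points are left uncovered. Note $r+b=4k+1\equiv 1\pmod 4$, so at least one point is always uncovered for trivial parity reasons, and after removing that point we are left with $4k$ points; the real claim is that no matter which point (or small set) one deletes, a full covering of the rest is impossible — five is the first value $\equiv 1\pmod 4$ that we can hope to force. The plan is to take the convex-position lower-bound construction of Theorem \ref{th:3.1} (two red and two blue points alternating, or one red and one blue alternating, on a convex polygon) and perturb it: use the "insert red,blue,red,red" gadget from the remark after Theorem \ref{th:3.1} to stack up local obstructions so that covering becomes expensive in several independent places at once. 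Concretely I would arrange the $4k+1$ points on a convex curve so that the cyclic color sequence consists of an alternating backbone $RBRB\cdots$ interrupted by one or two inserted $RBRR$ blocks, balanced to give exactly $2k+1$ red and $2k$ blue; each inserted block, by the parity argument in the proof of Theorem \ref{th:3.1}, forbids any star from "straddling" it cleanly, and the leftover alternating arc then behaves like the tight example of Theorem \ref{th:3.1} and loses $4$ more points there.

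The key combinatorial lemma I would isolate and prove is a strengthened version of the parity argument in Theorem \ref{th:3.1}(ii): for a convex point set, a star centered at a point $x$ with three leaves $j_1<j_2<j_3$ (in the cyclic order) cuts $\operatorname{conv}(S)$ into four arcs, and each arc must contain a number of points that is $\equiv 0\pmod 4$ *and* whose red/blue split is itself coverable; iterating this down the recursion, one shows that if the cyclic color word contains certain forbidden factors (like $RBRR$ straddled the wrong way, or a long alternating run), then the covering defect accumulates additively. The hard part will be precisely this accumulation step — showing the obstructions at the two inserted blocks and at the alternating arc are genuinely independent, so the defects $1+4$ (or $1+\dots$) do not overlap and cancel, and no clever global covering routes a star across all three regions to save points. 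I expect this to require a careful case analysis on where the center of a star sits relative to the inserted blocks, ruling out the $O(k)$ candidate star placements by the mod-$4$ and red/blue-split constraints; once that is done, counting the forced uncovered points gives $\mathcal{U}(S)\ge 5$, and taking any such $S$ for each $k\ge 4$ completes the proof.
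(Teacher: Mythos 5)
Your strategy diverges from the paper's in both parts, and in part (b) it breaks down. For (a), the paper does not stitch together convex insertion gadgets; it gives one uniform construction for every pair with $b\le r<3b$: a color-alternating convex $2b$-gon together with a cluster $Q$ of $r-b$ red points placed almost collinearly near the center, along a line through the midpoints of two antipodal sides, so that no bichromatic diagonal splits $Q$. Since $r<3b$ forces at least one red-centered star $T$ in any perfect covering, a short case analysis (center of $T$ in $Q$, or on the hull) shows that two consecutive edges of $T$ always cut off an odd number of points, so no perfect covering exists. Your route instead rests on the remark after Theorem~\ref{th:3.1} that inserting an $RBRR$ block preserves non-coverability; that remark is asserted in the paper without proof, and establishing it is the actual content of your argument, not ``routine bookkeeping''. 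So your (a) is plausible in outline but its load-bearing step is missing.

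Part (b) has a genuine gap: the proposed construction cannot work. First, two inserted $RBRR$ blocks are numerically impossible, since each block raises $|R|-|B|$ by $2$ while each alternating arc contributes at most $\pm 1$, so you cannot reach the required difference of $1$. Second, with one block the cyclic word is just the alternating sequence with one doubled red, and such a set can have $\mathcal{U}(S)=1$, not $\ge 5$. Concretely, for $k=4$ take $17$ points in convex position colored $R_1R_2B_3R_4B_5R_6\cdots R_{16}B_{17}$ (alternating except for the adjacent pair $R_1R_2$); deleting $R_{10}$, the four stars with centers $B_3$, $R_8$, $B_{13}$, $R_{16}$ and leaf sets $\{R_1,R_2,R_4\}$, $\{B_7,B_9,B_{11}\}$, $\{R_6,R_{12},R_{14}\}$, $\{B_5,B_{15},B_{17}\}$ form a non-crossing perfect covering of the remaining $16$ points. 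This is precisely why the paper's configuration is \emph{not} in convex position: it places the extra red point $x$ near the \emph{center} of the color-alternating regular $4k$-gon. You correctly note that the congruence $\mathcal{U}(S)\equiv 1\pmod 4$ reduces the task to proving $\mathcal{U}(S)\ge 2$; the paper then observes that a covering missing only one point needs $\lfloor k/2\rfloor\ge 2$ red-centered stars, and whether one of them is centered at $x$ or two of them are centered on the hull, some region cut off by consecutive star edges contains a point set that cannot be perfectly covered (an odd number of points, or an even number not divisible by four). Your ``accumulation lemma'' is not only unproven but, as the example above shows, false for the convex configurations you propose.
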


\begin{figure}[ht]
\centering
\includegraphics[scale=1]{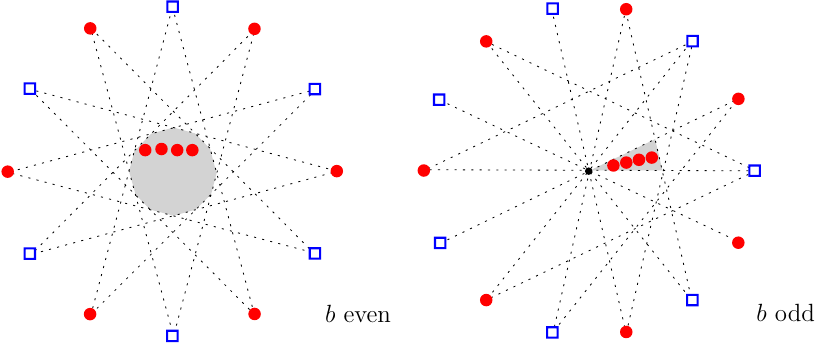}
\caption{Cases $ b $ even and $ b $ odd in the proof of Theorem~\ref{lem:PerfectCoverings}(a).}
\label{fig:Fig4N_even_odd}
\end{figure}

\begin{proof}
\noindent {(a)} Assume that $ r\geq b $ and $r\neq 3b$. If $ r > 3b $, a perfect covering of any $ (r,b) $-set  is numerically impossible as there are too many red points (even using only stars with blue center would leave red points uncovered). If $ r<3b $, the following $(r,b)$-set $ S $ cannot completely be $K_{1,3}$-covered: Draw a color-alternating convex $2b$-gon together with a set~$ Q $ of $r-b$ almost collinear red points as illustrated in Figure~\ref{fig:Fig4N_even_odd}. The set $ Q $ is not split by any bichromatic diagonal and its points are almost along a line $ \ell $ through the midpoints of two antipodal sides. Note that in a perfect $ K_{1,3} $-covering there must be at least one star~$ T $ with red center, since $ r<3b $. If this center of $ T $ is in $ Q $, then $ T $ must have at least two consecutive edges on the same side of $\ell$ (see Figure~\ref{fig:Fig4N_proof}, left). But these two edges isolate an odd number of points in $ S $ and, therefore, these points cannot be $ K_{1,3} $-covered. If the red center of $T$ is not in $Q$, then the set~$Q$ must be either to the left or
to the right of at least two edges of~$T$ (see Figure~\ref{fig:Fig4N_proof}, right). As before, two consecutive such edges isolate an odd number of points in $ S $ from the rest and so they cannot be completely $ K_{1,3} $-covered. If $ r=3b $, then Theorem~\ref{th:3.4} guarantees $\mathcal{U}(r,b)= 0$.

\begin{figure}[ht]
\centering
\includegraphics[scale=1]{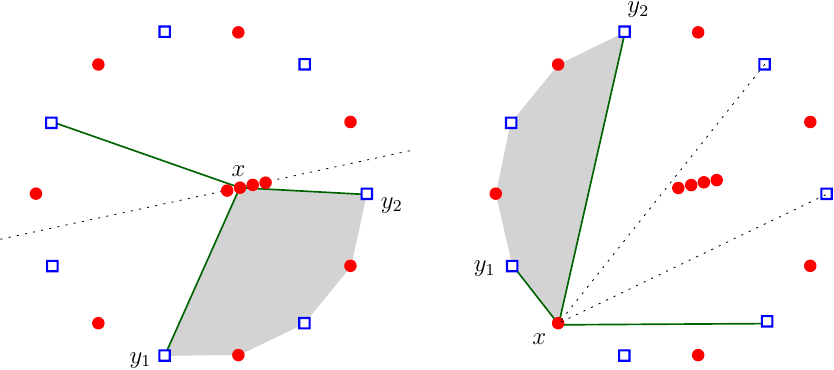}
\caption{Red center in $Q$ (left) or on the convex hull (right).}
\label{fig:Fig4N_proof}
\end{figure}



\begin{figure}[htb]
\centering
\includegraphics[width=\textwidth]{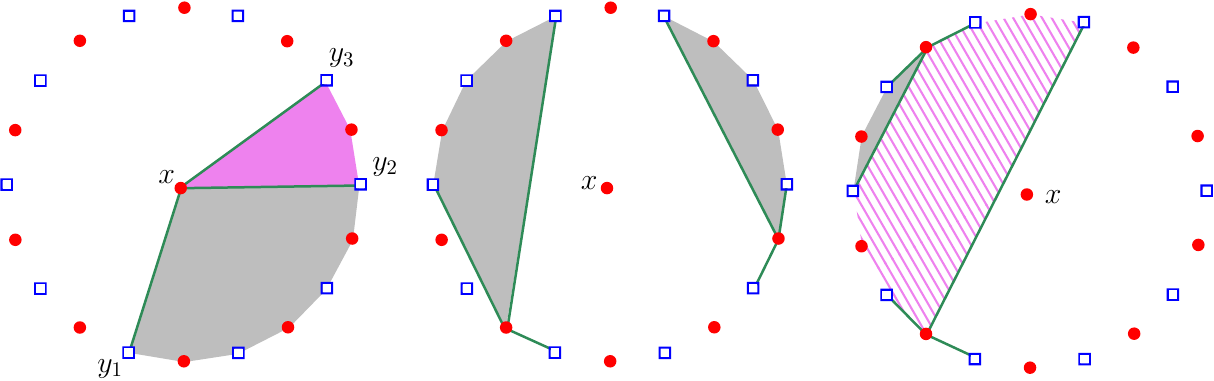}
\caption{Red center in $Q$ or two on the convex hull.}
\label{fig:Fig5left_proof}
\end{figure}

\noindent {(b)} Consider the $ (2k+1,2k) $-set $ S $ formed by the color-alternating vertices of a regular $ 4k $-gon and one red point $ x $ near the center of the polygon. Since $\mathcal{U}(2k+1,2k)\equiv 1 \pmod {4}  $, it is enough to show that $ \mathcal{U}(S)>1 $. Note that in a $ K_{1,3} $-covering of $ S $ leaving only one point uncovered, there must be exactly $ \lfloor k/2 \rfloor\geq 2$ stars with red center. If there is a star $ T $ whose center is $ x $, each of the sectors determined by the edges of $ T $ contains an odd number of points (see Figure~\ref{fig:Fig5left_proof}, left).
Thus, each of these subsets of $ S $ cannot be completely $ K_{1,3} $-covered.

If there are two stars with red centers that are not $ x $, then two cases appear depending on whether $x$ is between the two stars (Figure~\ref{fig:Fig5left_proof}, center) or it is in one of the regions delimited by two consecutive edges (diagonals) of one of the stars (Figure~\ref{fig:Fig5left_proof}, right). In the first case, each star defines a region isolating an odd number of points. In the second case, one of the stars defines a region with an odd number of points and the region delimited by the two stars contains $2j+2$ points, for some $j$, which is not a multiple of 4.  In either case there are at least two points that are not covered.
\end{proof}

\section{General configurations of points}\label{sec-3}

Given an $(r,b)$-set $S$, the main goal of this section is to give a lower bound on the number of points of $S$ that can always be $K_{1,3}$-covered. Assume that $b\le r$ and let $\alpha=\frac{b}{r}$. Notice that, if $\alpha < 1/3$, then $r> 3b$ and there are too many red points to be covered, even by stars centered in blue points. In this case, at least $r-3b$ red points will necessarily remain uncovered in any $K_{1,3}$-covering. Moreover, the bound is tight because, by removing $r-3b$ red points, we obtain a $\{1,3\}$-equitable set that can be $K_{1,3}$-covered by Theorem~\ref{th:3.4}.

When $1/3 \le \alpha \leq 1$, our first attempt to obtain a ``good covering" was trying to divide the plane into disjoint convex regions such that every region contained either three red points and one blue point or vice versa. If such a partition exists, then the four points in each region can be trivially $K_{1,3}$-covered. Unfortunately, this is not always possible. For instance, a color-alternating convex point set does not admit this kind of partition. Using a different (but in same way similar) approach, we prove that, when $1/3 \le \alpha \leq 1$, at least $\frac{8}{9}(r+b-8)$ points can always be $K_{1,3}$-covered. The proof of this result is divided into two parts: Section~\ref{sect:lowerrange} proves the result for $1/3 \leq \alpha \leq  4/5$ and Section~\ref{sect:upperrange} for $4/5 \leq \alpha \leq 1$. Finally, in Section~\ref{sect:complexity}, we show an efficient algorithm for computing at least $\frac{8}{9}(r+b-8)$ points that can be always $K_{1,3}$-covered.

\subsection{Lower bound when $1/3 \le \alpha \le 4/5$}\label{sect:lowerrange}

{In this case, the general idea to prove that at least $\frac{8}{9}(r+b-8)$ points can always be $K_{1,3}$-covered is the following. We first exhibit a special family of $(r,b)$-sets such that for any member of this family it is possible to bound the number of uncovered points (Theorem~\ref{th:10} and Lemma~\ref{lem:11}). Then, we show that any $(r,b)$-set can be transformed into a member of this family by removing some red and blue points, so that a bound on the number of uncovered points can be given for any $(r,b)$-set (Theorem~\ref{th:B-S}). Using this result, we show that at least $\frac{8}{9}(r+b-8)$ points can always be $K_{1,3}$-covered when $1/3 \le \alpha \le 4/5$ (Corollary~\ref{cor:8_9_Lower}). Let us go into the details.}

{Given a bicolored point set $S=R\cup B$, the following theorem gives an upper bound on the number of points that will remain uncovered, under certain constrains on the number of red and blue points. This theorem is the main result of this section and its proof, included in Section~\ref{sect:proof}, requires the two lemmas in Section~\ref{sect:lemmas} together with a new one (Lemma~\ref{lem:11}).}

\begin{theorem}\label{th:10}
Let $t$ be a  integer, $t\ge 0$. Then, $\mathcal{U}(3k-t,k+2t)\leq t$ for any integer $k\geq\frac{5}{8}t$.
\end{theorem}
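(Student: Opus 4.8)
\emph{Overall strategy.} I would prove this by a single ``peeling'' reduction rather than a long induction: cut the plane with one directed line so that one side is a $\{1,3\}$-equitable set (perfectly coverable by Theorem~\ref{th:3.4}) while the other side is a small chunk that can be covered by stars centred at red points, leaving at most $t$ points uncovered. The engine for the chunk is the following consequence of the Equitable Subdivision Theorem~\ref{th:C}, which I would record first as a sub-claim: \emph{if a point set has at least $n$ red and at least $3n$ blue points, then it admits $n$ pairwise non-crossing $K_{1,3}$-stars, each centred at a red point}; indeed, restrict to any $n$ red and any $3n$ blue points and apply Theorem~\ref{th:C} with $c=1$, $d=3$, $g=n$ to split the plane into $n$ convex regions, each holding one red and three blue points, then star each region. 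Observe also that the hypothesis $k\ge\frac58 t$ is \emph{exactly} the inequality $b=k+2t\le 3(3k-t)=3r$, so in this regime we are never forced to leave many blue points uncovered; and $t=0$ is immediate because $(3k,k)$ is $\{1,3\}$-equitable.

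\emph{The reduction.} Assume $t\ge 1$. I would pick an integer $\beta$ with $\tfrac52 t\le\beta\le\tfrac{21}{8}t$ (such a $\beta$ exists for all $t$ except $t=1,3$) and set $\rho=3\beta-7t$, so that $0\le\rho\le\tfrac78 t$ and $3\rho\le\beta$. Using the \emph{new lemma} together with Lemma~\ref{lem:9}, I would produce a directed line $\ell$ with $|\mathrm{left}(\ell)\cap S|=\rho+\beta$ and $|\mathrm{left}(\ell)\cap R|=\rho$; the required size bounds $\rho\le r$, $\beta\le b$, $\rho+\beta\le|S|$ all follow from $k\ge\frac58 t$. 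Writing $L=\mathrm{left}(\ell)\cap S$, a set with $\rho$ red and $\beta\ge 3\rho$ blue points, the sub-claim gives $\rho$ non-crossing red-centred stars in $L$ covering its $\rho$ red points and $3\rho$ of its blue points, leaving $\beta-3\rho=21t-8\beta\le t$ blue points of $L$ uncovered. The complement $S\setminus L$ has $3k-t-\rho=3(k+2t-\beta)$ red and $k+2t-\beta$ blue points, hence is $\{1,3\}$-equitable and is completely covered by Theorem~\ref{th:3.4}; since $L\subset\mathrm{left}(\ell)$ and $S\setminus L\subset\mathrm{right}(\ell)$, the two families of edges are disjoint, and at most $t$ points remain uncovered in total. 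For $t$ even one takes $\beta=\tfrac52 t$ (uncovered $=t$), for odd $t\ge5$ one takes $\beta=\tfrac{5t+1}{2}$ (uncovered $=t-4$); the two exceptional values $t=1,3$ and any configuration too small for the new lemma to apply are finitely many cases that I would check directly, peeling single stars with the help of Lemmas~\ref{lem:9} and~\ref{lem:10}.

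\emph{The main obstacle.} The real work is the new lemma: the existence of a directed line cutting off a prescribed total number of points of which a prescribed number are red. Lemma~\ref{lem:9}(i) already supplies the interpolation once one has two directed lines cutting off the correct total number of points whose red-counts lie on either side of the target $\rho$; the hard part will be guaranteeing these two ``bracketing'' lines exist, i.e.\ that as the cutting line rotates the red-count among the cut-off points is not pinned strictly above or strictly below $\rho$. This is where one must exploit the structure of $S$ (its convex layers and the spread of the two colours) together with the bounds $\rho\le\tfrac78 t$ and $\beta\le\tfrac{21}{8}t$, which come from $k\ge\tfrac58 t$ and are the reason the coefficient must be $\tfrac58$ rather than the weaker numerically necessary $\tfrac12$. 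Once the line is in hand, everything else — covering $L$ by the sub-claim, covering $S\setminus L$ by Theorem~\ref{th:3.4}, and the non-crossing property — is routine.
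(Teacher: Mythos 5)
Your reduction hinges on a ``new lemma'' that you acknowledge you have not proved: the existence of a directed line $\ell$ with $|left(\ell)\cap S|=\rho+\beta$ and $|left(\ell)\cap R|=\rho$, i.e.\ a halfplane cutting off a \emph{small, strongly blue-heavy} cap ($\beta\ge 3\rho$ with $\rho+\beta\le \tfrac{7}{2}t$, independent of $k$). This is not merely a missing technical step; the statement is false for general configurations, so the whole peeling strategy collapses. Take $k\gg t$, place the $3k-t$ red points evenly on a large circle and cluster the $k+2t$ blue points in a tiny disk at its center. Any halfplane containing even one blue point has its boundary passing within the tiny disk, hence contains at least roughly $(3k-t)/2-O(1)$ red points --- far more than $\rho\le\tfrac{7}{8}t$. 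So no line cuts off $\rho$ red and $\beta$ blue points, and Lemma~\ref{lem:9}(i) cannot help because there is nothing to interpolate between: for this value of $m=\rho+\beta$ the red count is pinned at $m$ for every direction. The reason Lemma~\ref{lem:9} is usable in the paper is precisely that the paper only ever asks for \emph{balanced} cuts: it takes two parallel lines $\ell_1,\ell_2$ with opposite directions, each leaving about half of $S$ on its left, so that $left(\ell_1)$ and $left(\ell_2)$ are disjoint and together miss only $O(1)$ points; this forces $|left(\ell_1)\cap R|+|left(\ell_2)\cap R|$ to be essentially $|R|$ and yields the two bracketing lines automatically. Your cap is too small for any such complementarity argument.

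Consequently the paper's proof has a genuinely different shape from yours: it is an induction on $t$ that at each step splits $S$ by a balanced line into two subsets that are again of the form $(3j-t',j+2t')$ with $t'\approx t/2$, distributing the uncovered budget as $t=\lceil t/2\rceil+\lfloor t/2\rfloor$ (with Lemma~\ref{lem:11} as an extended base case when the halved parameter $j$ dips just below $\tfrac{5}{8}t'$), rather than covering one equitable piece perfectly and dumping all $t$ uncovered points into a single small piece. Your sub-claim (restrict to $n$ red and $3n$ blue points and apply Theorem~\ref{th:C} to get $n$ non-crossing red-centred stars) is correct, your arithmetic for $\rho,\beta$ and the uncovered count $21t-8\beta\le t$ is correct, and your observation that $k\ge\tfrac{5}{8}t$ is exactly $b\le 3r$ is a nice remark; but without the cutting line the argument does not go through, and the obstacle you deferred is exactly where the theorem's difficulty lives.
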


Theorem~\ref{th:10} implies the following lower bound on $  \mathcal{C}(r,b)$ for all $ \alpha $ with $\frac{1}{3}\leq\alpha\leq 1$. We use this bound to prove the general lower bound $  \mathcal{C}(r,b)\geq \frac{8}{9}(r+b)-4$ for any $\frac{1}{3}\leq\alpha\leq \frac{4}{5}$ as stated in Corollary \ref{cor:8_9_Lower}.

\begin{theorem}\label{th:B-S}
Let $r$ and $b$ be positive integers, $r \geq b$,
and let $\alpha=\frac{b}{r}$.
If $\frac{1}{3}\leq\alpha\leq 1$, then

\[
\mathcal{C}(r,b)\geq\frac{4}{7}\left(\frac{\alpha+2}{\alpha+1}\right)(r+b)-4.
\]

\end{theorem}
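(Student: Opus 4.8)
The plan is to derive Theorem~\ref{th:B-S} directly from Theorem~\ref{th:10} by choosing the parameter $t$ optimally as a function of $r$ and $b$. First I would set up the correspondence between the parametrizations: Theorem~\ref{th:10} speaks of $(r,b)$-sets of the form $(3k-t,k+2t)$, so given an arbitrary $(r,b)$-set with $b\le r\le 3b$ I would look for nonnegative integers $k,t$ with $3k-t=r'$ and $k+2t=b'$ for suitable $r'\le r$, $b'\le b$; adding these gives $4k+t=r'+b'$, and solving, $t=(3b'-r')/7$ and $k=(r'+2b')/7$. Since $\tfrac13\le\alpha\le 1$ ensures $3b-r\ge 0$, we can pick $r',b'$ as close to $r,b$ as divisibility allows (discarding at most a bounded number of points, which will be absorbed into the additive constant), set $t=\lfloor(3b'-r')/7\rfloor$ or the exact value when $7\mid(3b'-r')$, and check that the hypothesis $k\ge\tfrac58 t$ of Theorem~\ref{th:10} holds — this amounts to $r'+2b'\ge \tfrac58(3b'-r')$, i.e. $\tfrac{13}{8}r'\ge -\tfrac{1}{8}b'$, which is automatic. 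So Theorem~\ref{th:10} applies and yields $\mathcal U(r',b')\le t$, hence $\mathcal C(r',b')\ge r'+b'-t = 4k+t-t=4k$.

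Next I would translate the bound $\mathcal C\ge 4k$ back into the claimed form. Using $4k = \tfrac47(r'+2b')$ and $r'+2b' = (r'+b')+b' $, together with $b'=\alpha' r'$ where $\alpha'$ is essentially $\alpha$ up to the small rounding, one gets $4k = \tfrac47\cdot\tfrac{\alpha'+2}{\alpha'+1}(r'+b')$. The key algebraic identity to verify is $r'+2b' = \tfrac{\alpha'+2}{\alpha'+1}(r'+b')$ when $b'/r'=\alpha'$, which is a one-line check. Then, accounting for the at most a constant number of points thrown away when passing from $(r,b)$ to $(r',b')$ and for the rounding in $t$ and $k$, and using the monotonicity remark that discarding points can only decrease the covered count by the number discarded, I would conclude $\mathcal C(r,b)\ge \tfrac47\cdot\tfrac{\alpha+2}{\alpha+1}(r+b)-4$, where the $-4$ subsumes all the integrality losses. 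I would double-check that the worst case of the rounding (how many points may need to be discarded to make $3b'-r'$ divisible by $7$ and the quantities integral) really is bounded by a quantity whose contribution to the final estimate is at most $4$; since $\tfrac{\alpha+2}{\alpha+1}\le 2$ over the relevant range, discarding $c$ points costs at most $2c$ in the bound, so $c\le 2$ would suffice, and the modular conditions can indeed be met by removing at most that many points.

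The main obstacle I anticipate is precisely this bookkeeping of the integrality/rounding losses: one must be careful that the reduction from a general $(r,b)$ to one of the special shape $(3k-t,k+2t)$ can always be done by removing only a bounded number of points, and that after removal the hypothesis $k\ge\tfrac58 t$ of Theorem~\ref{th:10} is still satisfied (it could conceivably fail near the boundary $\alpha=\tfrac13$, where $t$ is largest relative to $k$). I would handle the boundary by noting that at $\alpha=\tfrac13$ we have $3b-r$ small, so $t$ is small and the constraint is slack; and for $\alpha$ bounded away from $\tfrac13$ the constraint $k\ge\tfrac58 t$ is comfortably satisfied since $k$ grows linearly in $r+b$ while $t=(3b-r)/7$ can be much smaller. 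A secondary technical point is to make sure the additive constant is genuinely $4$ and not something larger; if a naive count gives $-5$ or worse, I would tighten the choice of $r',b'$ (e.g. by choosing the congruence classes of the discarded points cleverly) to bring it down to $-4$, matching the statement. Everything else is elementary algebra with $\alpha$.
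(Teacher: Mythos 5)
Your overall strategy is exactly the paper's: peel off a bounded number of points so that the remaining set has the shape $(3k-t,k+2t)$, apply Theorem~\ref{th:10} to cover all but $t$ of the remaining $4k+t$ points, and rewrite $4k$ in terms of $\alpha$. However, as written your algebra is transposed and the promised one-line check would fail. From $3k-t=r'$ and $k+2t=b'$ one gets $k=(2r'+b')/7$, not $(r'+2b')/7$, so the covered count is $4k=\frac{4}{7}(2r'+b')$; correspondingly, since the paper takes $\alpha=b/r$ with $b\le r$, the correct identity is $2r+b=\frac{\alpha+2}{\alpha+1}(r+b)$, whereas the identity you propose to verify, $r+2b=\frac{\alpha+2}{\alpha+1}(r+b)$, holds only when $\alpha=1$. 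Your two transpositions happen to cancel in the final display, but the intermediate claims are false, and your verification of $k\ge\frac{5}{8}t$ is carried out on the wrong quantity (with the correct $k$ it reduces to $b'\le 3r'$, which does follow from $\alpha\le 1$ but is not ``automatic'' for all $r',b'$).

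The second, more substantive issue is the rounding bookkeeping, which you rightly flag as the main obstacle but whose proposed resolution does not close. Removing a red point changes $3b-r$ by $+1$ and a blue point by $-3$, and a short case check shows that when $3b-r\equiv 1\pmod 7$ no removal of at most $2$ points reaches a multiple of $7$ in the required way; so your fallback ``$c\le 2$ would suffice'' is unattainable, and in the worst case ($3b-r\equiv 4\pmod 7$) one removes $4$ points, $2$ red and $2$ blue. What rescues the additive constant $4$ is not a bound of $2$ per removed point but the finer accounting that the loss in the bound is $\frac{4}{7}(2\rho+\beta)$ when $\rho$ red and $\beta$ blue points are removed, i.e.\ $\frac{8}{7}$ per red and $\frac{4}{7}$ per blue point, which gives at most $\frac{4}{7}(2\cdot 2+2)=\frac{24}{7}<4$. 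This is exactly the computation the paper performs, writing $3b-r=7t+s$ with $0\le s\le 6$, removing $3\lceil\frac{s}{3}\rceil-s$ red and $\lceil\frac{s}{3}\rceil$ blue points, and checking $\frac{8}{7}s-4\lceil\frac{s}{3}\rceil\ge-\frac{24}{7}$; without this step your argument either stalls at the divisibility requirement or yields an additive constant worse than $4$.
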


\begin{proof}
Write $3b-r=7t+s$ for some integers $t\geq 0$ and $s$ with $0\leq s\leq 6$. Let $k=b-2t-\lceil\frac{s}{3}\rceil$. Then, $r=3k-t+3\lceil\frac{s}{3}\rceil-s$ and $b=k+2t+\lceil\frac{s}{3}\rceil$. Also, $k\geq\frac{5}{8}t$: For $t=0$, $3b-r=s\ge 0$, implying  $k=b-\lceil s/3 \rceil \ge 0$, so $k\ge 5/8 t$. For $t > 0$, as $b \le r$, then $3b-r \le 2b$, implying  $7t+s \le 2b$. Hence, $3t+s-2\lceil s/3 \rceil \le 2b - 4t - 2\lceil s/3 \rceil = 2 k$ and $k\ge (3t+s-2\lceil s/3 \rceil)/2$. For $0\le s\le 6$, the minimum of $s-2\lceil s/3 \rceil$ is reached at $s=1$, so $k\ge (3t-1)/2$. Since $t > 0$, $(3t-1)/2 \ge 5/8 t$.
Removing $3 \lceil s/3 \rceil -s$ red and $\lceil s/3 \rceil$ blue points,

\[
\mathcal{U}(r,b)=\mathcal{U}(3k-t+3\left\lceil\frac{s}{3}\right\rceil-s,k+2t+\left\lceil\frac{s}{3}\right\rceil) \leq \mathcal{U}(3k-t,k+2t)+4\left\lceil\frac{s}{3}\right\rceil -s
\]
\noindent and, by Theorem \ref{th:10},  $\mathcal{U}(3k-t,k+2t)+4\left\lceil\frac{s}{3}\right\rceil -s \leq t+4\left\lceil\frac{s}{3}\right\rceil -s$.
Then,
\begin{multline*}
\mathcal{U}(r,b)=\mathcal{U}\left(3k-t+3\left\lceil\frac{s}{3}\right\rceil-s,k+2t+\left\lceil\frac{s}{3}\right\rceil\right)\\
\leq \mathcal{U}(3k-t,k+2t)+4\left\lceil\frac{s}{3}\right\rceil -s\leq t+4\left\lceil\frac{s}{3}\right\rceil -s,
\end{multline*}
and, therefore,
\begin{multline*}
\mathcal{C}(r,b)=r+b-\mathcal{U}(r,b)\geq r+b-t-4\left\lceil\frac{s}{3}\right\rceil +s=4k\\
=4\left(b-\frac{2}{7}(3b-r-s)-\left\lceil\frac{s}{3}\right\rceil\right)=\frac{4}{7}(b+2r)+\frac{8}{7}s-4\left\lceil\frac{s}{3}\right\rceil
\geq\frac{4}{7}\left(\frac{\alpha+2}{\alpha+1}\right)(r+b)-4.
\end{multline*}
\end{proof}

\begin{corollary}\label{cor:8_9_Lower}
Let $r\ge b$ be positive integers such that $\frac{1}{3}\leq \alpha \leq\frac{4}{5}$. If $r$ red points and $b$ blue points are given in the plane in general position, then at least $\frac{8}{9}(r+b)-4$ points can be $K_{1,3}$-covered.
\end{corollary}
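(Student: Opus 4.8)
The plan is to derive Corollary~\ref{cor:8_9_Lower} directly from Theorem~\ref{th:B-S} by a purely arithmetic argument: the function $\alpha\mapsto\frac{4}{7}\cdot\frac{\alpha+2}{\alpha+1}$ is monotone on $[\frac13,\frac45]$, so its minimum over that interval is attained at one endpoint, and one checks that this minimum is exactly $\frac{8}{9}$. Thus on the whole range $\frac13\le\alpha\le\frac45$ we have $\frac{4}{7}\bigl(\frac{\alpha+2}{\alpha+1}\bigr)\ge\frac{8}{9}$, and plugging this into the bound $\mathcal{C}(r,b)\ge\frac{4}{7}\bigl(\frac{\alpha+2}{\alpha+1}\bigr)(r+b)-4$ from Theorem~\ref{th:B-S} gives $\mathcal{C}(r,b)\ge\frac{8}{9}(r+b)-4$ immediately.

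Concretely, first I would compute the derivative (or just compare values) of $f(\alpha)=\frac{\alpha+2}{\alpha+1}=1+\frac{1}{\alpha+1}$, which is strictly decreasing in $\alpha$; hence $f$ is minimized at the right endpoint $\alpha=\frac45$, where $f(\tfrac45)=\frac{\,4/5+2\,}{\,4/5+1\,}=\frac{14/5}{9/5}=\frac{14}{9}$. Therefore $\frac{4}{7}f(\alpha)\ge\frac{4}{7}\cdot\frac{14}{9}=\frac{8}{9}$ for every $\alpha\in[\frac13,\frac45]$. (For completeness one notes the left endpoint gives $\frac47\cdot\frac{7/3}{4/3}=\frac47\cdot\frac74=1\ge\frac89$, consistent with monotonicity.) Since $r+b>0$, multiplying the inequality $\frac{4}{7}f(\alpha)\ge\frac89$ by $r+b$ and subtracting $4$ yields the claim.

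There is essentially no obstacle here — the corollary is a one-line specialization of Theorem~\ref{th:B-S}, and the only thing to be careful about is confirming that $\frac45$, not $\frac13$, is the minimizing endpoint (i.e.\ getting the direction of monotonicity right) so that the constant $\frac89$ is indeed a valid lower bound across the entire subinterval. The statement about computing the covering in the claimed time and the extension of the range to $\frac45\le\alpha\le1$ are handled elsewhere (Sections~\ref{sect:upperrange} and~\ref{sect:complexity}), so nothing further is needed for the corollary itself.

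\begin{proof}
By Theorem~\ref{th:B-S}, $\mathcal{C}(r,b)\geq\frac{4}{7}\bigl(\frac{\alpha+2}{\alpha+1}\bigr)(r+b)-4$. The function $f(\alpha)=\frac{\alpha+2}{\alpha+1}=1+\frac{1}{\alpha+1}$ is strictly decreasing for $\alpha>-1$, so on the interval $\frac{1}{3}\leq\alpha\leq\frac{4}{5}$ it attains its minimum at $\alpha=\frac{4}{5}$, where $f(\tfrac{4}{5})=\frac{4/5+2}{4/5+1}=\frac{14/5}{9/5}=\frac{14}{9}$. Hence $\frac{4}{7}\bigl(\frac{\alpha+2}{\alpha+1}\bigr)\geq\frac{4}{7}\cdot\frac{14}{9}=\frac{8}{9}$ for all $\alpha$ in this range. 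Since $r+b>0$, it follows that $\mathcal{C}(r,b)\geq\frac{8}{9}(r+b)-4$, that is, at least $\frac{8}{9}(r+b)-4$ points can be $K_{1,3}$-covered.
\end{proof}
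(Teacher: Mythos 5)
Your proof is correct and is essentially identical to the paper's: both apply Theorem~\ref{th:B-S} and use the fact that $\frac{\alpha+2}{\alpha+1}$ is decreasing, so the bound is minimized at $\alpha=\frac{4}{5}$, where $\frac{4}{7}\cdot\frac{14}{9}=\frac{8}{9}$. Your version just spells out the endpoint evaluation in slightly more detail.
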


\begin{proof}
Since the function $\frac{\alpha+2}{\alpha+1}$ is decreasing, then for $\frac{1}{3}\leq\alpha\leq\frac{4}{5}$ we have
\[
\mathcal{C}(r,b)\geq\frac{4}{7}\left(\frac{\alpha+2}{\alpha+1}\right)(r+b)-4\geq\frac{4}{7}\left(\frac{\frac{4}{5}+2}{\frac{4}{5}+1}\right)(r+b)-4=\frac{8}{9}(r+b)-4.
\]
\end{proof}

The following lemma extends the range of validity of Theorem~\ref{th:10} and will be used as an extension of the basis of induction in the proof of Theorem~\ref{th:10}.

\begin{lemma}\label{lem:11}
If $k=\lceil\frac{5}{8}t\rceil-1$, then $\mathcal{U}(3k-t,k+2t)\leq\min\{8,t\}\leq t$ for any $t\geq 4$ or $t=2$.
\end{lemma}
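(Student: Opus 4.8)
The plan is to treat this lemma as an explicit ``small cases'' statement that serves as an enlarged induction base for Theorem~\ref{th:10}. Fix $t\ge 4$ (or $t=2$) and set $k=\lceil\frac{5}{8}t\rceil-1$. Writing $r=3k-t$ and $b=k+2t$, one checks that $r+b=4k+t$, so in any partial $K_{1,3}$-covering the number of covered points is a multiple of $4$ and hence $\mathcal{U}(3k-t,k+2t)\equiv r+b \pmod 4$. The strategy is to show directly that one can cover all but at most $8$ points (and, when $t<8$, all but at most $t$ points, which the congruence will force down to exactly the right residue). Because $k$ is the smallest integer with $k\ge\frac{5}{8}t$ minus one, the quantity $r=3k-t$ is small relative to $b$; in fact a short computation gives $r\le 2b$ is false in general here, so instead one observes that after deleting a bounded number of red and blue points one reaches a set to which an already-proven result applies.

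The key steps, in order, are: (i) compute $r=3k-t$, $b=k+2t$ explicitly in terms of $t$ for the four residue classes of $t \bmod 8$, so that $k=\lceil\frac{5}{8}t\rceil-1$ is pinned down; (ii) for each residue class, identify a small set $D\subseteq R\cup B$ of at most $8$ points whose removal turns the remaining set into a $\{1,3\}$-equitable set, i.e.\ with red:blue ratio $3:1$ or $1:3$ — this is possible precisely because $3b-r = 3(k+2t)-(3k-t)=7t$, so $3b-r$ is a multiple of $7$ and, after removing $3\lceil s'/3\rceil - s'$ red and $\lceil s'/3\rceil$ blue points for the appropriate residue bookkeeping, one lands exactly on $3b'=r'$ or the symmetric case (this is the same removal trick used in the proof of Theorem~\ref{th:B-S}); (iii) apply Theorem~\ref{th:3.4} to $K_{1,3}$-cover the equitable remainder, leaving at most $|D|\le 8$ points uncovered; (iv) invoke the congruence $\mathcal{U}(3k-t,k+2t)\equiv r+b\equiv 4k+t\equiv t\pmod 4$ together with the bound $\mathcal{U}\le 8$: when $t\in\{2,4\}$ this immediately yields $\mathcal{U}\le t$, and when $t\ge 8$ the claimed bound is just $\mathcal{U}\le 8$, which we already have, while for $t\in\{5,6,7\}$ one needs the slightly sharper count $\mathcal{U}\le t$, obtained by choosing $|D|$ as small as the residue bookkeeping in step (ii) permits and then rounding up to the correct residue mod $4$.

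The main obstacle I expect is step (ii) together with the sharpening in step (iv) for the intermediate values $t\in\{5,6,7\}$: one must verify that the deletion set $D$ can actually be taken with $|D|\le t$ (not merely $|D|\le 8$) in these cases, and that after deletion the ratio is exactly $3:1$ in the correct direction so that Theorem~\ref{th:3.4} applies. This is a finite check — essentially a table over $t\bmod 8$ with $t$ small — but it is where an off-by-one in the definition $k=\lceil\frac{5}{8}t\rceil-1$ could bite, so it must be done carefully. The case $t=2$ is genuinely separate (hence its explicit mention in the statement): there $k=\lceil 10/8\rceil-1=1$, giving $(r,b)=(1,5)$, and one checks by hand that $\mathcal{U}(1,5)=2$, consistent with $2\equiv 6\pmod 4$ and $2=\min\{8,2\}$. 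Everything else reduces to the equitable-set theorem plus the modular obstruction, with no new geometric idea required beyond what Theorems~\ref{th:3.4} and~\ref{th:B-S} already supply.
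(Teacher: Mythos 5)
Your overall route is the same as the paper's: observe that for $k=\lceil\frac{5}{8}t\rceil-1$ the set is within a bounded number of points of being $\{1,3\}$-equitable, delete those points, and apply Theorem~\ref{th:3.4}. However, two of your concrete steps are wrong as stated. First, the justification in step (ii) is misdirected: the relevant quantity is not $3b-r=7t$ (distance from $r=3b$, which is enormous here) but $b-3r=5t-8k$. From $\frac{5}{8}t-1\leq k<\frac{5}{8}t$ one gets $0<5t-8k\leq 8$, so $b=3r+i$ with $1\leq i\leq 8$ and the deletion set $D$ consists of exactly $i$ \emph{blue} points (no red points, and no $\lceil s'/3\rceil$-style bookkeeping); this single inequality, not a table, is what gives $|D|\leq 8$ uniformly for all $t$. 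Your aside that ``$r\le 2b$ is false in general here'' is also incorrect ($k<\frac{5}{8}t<5t$ forces $r\leq 2b$), though nothing depends on it.

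Second, step (iv) has the cases backwards. The congruence $\mathcal{U}\equiv t\pmod 4$ combined with $\mathcal{U}\leq 8$ \emph{does} settle $t\in\{5,6,7\}$ (the possible values $\{1,5\}$, $\{2,6\}$, $\{3,7\}$ are all $\leq t$), but it does \emph{not} ``immediately yield'' the claim for $t\in\{2,4\}$: there the possible values are $\{2,6\}$ and $\{0,4,8\}$, so $\mathcal{U}\leq 8$ plus the congruence is insufficient and you genuinely need the explicit deficiency $i$, which equals $2$ and $4$ respectively (and $1,6,3$ for $t=5,6,7$; $i$ depends only on $t\bmod 8$ via $i=8-(3t\bmod 8)$). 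Since $i\leq t$ in every required case, the direct bound $\mathcal{U}\leq i$ already finishes the proof and the congruence is not needed at all — this is exactly what the paper does. So the plan is salvageable, but only after the residue computation you defer to a ``finite check'' is actually carried out and used in place of, not alongside, the mod-$4$ argument.
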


\begin{proof}
Let $r=3k-t$ and $b=k+2t$. Since $\frac{5}{8}t-1\leq k=\lceil\frac{5}{8}t\rceil -1 <\frac{5}{8}t$, then
\[
\frac{21}{8}t-1\leq b <\frac{21}{8}t\leq 3r+9<\frac{21}{8}t+9.
\]
Then, $b=3r+i$ for some $0\leq i\leq 8$ and thus $\mathcal{U}(r,b)=\mathcal{U}(r,3r+i)\leq \mathcal{U}(r,3r)+i=i\leq 8$. Hence, the result holds for $t\geq 8$. If $t$=2, then $k=\lceil\frac{5}{8}t\rceil-1=1$, $r=1$ and $b=5$, implying that $i=2$ because $b=3r+i$. Moreover, if $t=4$, $5$, $6$, or $7$, then $i=4$, $1$, $6$, or $3$. Therefore $\mathcal{U}(r,b)\leq i\leq t$ in all cases.
\end{proof}

\subsubsection{Proof of Theorem~\ref{th:10}}\label{sect:proof}

We prove the result by induction on $t$. Theorem~\ref{th:3.4} shows the result for $t=0$. Also, $\mathcal{U}(2,3)=1$ shows the result for $t=k=1$. Assume that $t\geq 1$, $(t,k)\neq (1,1)$, and consider the set $R\cup B$ of points in general position such that $|R|=3k-t$ 
and $|B|=k+2t$
for some $k\geq\frac{5}{8}t$. The following cases arise, {depending on the parity of $t$ and $k$. The general idea in each case is to divide the original point set into two disjoint points sets, roughly involving half of the points, and apply induction on them.}

\begin{enumerate}
\item Suppose that $t$ and $k$ are even.

Let $k=2j$ for some integer $j$. By Ham-Sandwich theorem
and by induction (since $k=2j\geq\frac{5}{8}t$ {then} $j\geq\frac{5}{8}\cdot\frac{t}{2}$), we have
\[
\mathcal{U}(3k-t,k+2t)=\mathcal{U}(6j-t,2j+2t)\leq 2\ \mathcal{U}\left(3j-\frac{t}{2},j+2\cdot\frac{t}{2}\right)\leq 2\cdot\frac{t}{2}=t.
\]

\item Suppose that $t$ is even and $k$ is odd.

Let $k=2j+1$ for some integer $j$. Consider two parallel lines $\ell_1 $ and $\ell_2 $, with opposite directions, such that $|left(\ell_1)\cap (R\cup B)|=|left(\ell_2)\cap (R\cup B)|=4j+\frac{t}{2}$. By Lemma~\ref{lem:10}, we can further assume that there is at least one red point in $right(\ell_1)\cap right(\ell_2)$. (If $right(\ell_1)\cap right(\ell_2)$ only contains blue points, Lemma~\ref{lem:10} is applied taking $x$ as a red point in $left(\ell_1)$ and $m=4j+\frac{t}{2}+1$, and the line $\ell$ obtained according to the lemma is moved slightly such that it does not pass through $x$.) Since there are at most $|R|-1=6j+2-t$ red points in $left(\ell_1)\cup left(\ell_2)$, then we can assume without loss of generality that $|left(\ell_1)\cap R|\leq 3j+1-\frac{t}{2}$ and $|left(\ell_2)\cap R|\geq 3j+1-\frac{t}{2}$.

\begin{itemize}
\item[(a)] If $|left(\ell_1)\cap R|\leq 3j-\frac{t}{2}$, and since $|left(\ell_2)\cap R|\geq 3j+1-\frac{t}{2}$, by Lemma~\ref{lem:9} there exists a directed line $\ell$ such that $|left(\ell)\cap (R\cup B)|=4j+\frac{t}{2}$ and $|left(\ell)\cap R|=3j-\frac{t}{2}$. Then, $|left(\ell)\cap B|=j+2\cdot\frac{t}{2}$, where $\frac{t}{2}$ is a positive integer and $2j+1\geq\frac{5}{8}t$ implies that either $j+1>j\geq\frac{5}{8}\cdot\frac{t}{2}$ or $j+1>j\geq\lceil\frac{5}{8}\cdot\frac{t}{2}\rceil-1$ and $\frac{t}{2}\not=1,3$. By induction and Lemma~\ref{lem:11},
\[
{\mathcal{U}(left(\ell)\cap (R\cup B))}\leq\mathcal{U}\left(3j-\frac{t}{2},j+2\cdot\frac{t}{2}\right)\leq\frac{t}{2}.
\]

(Note that Lemma~\ref{lem:11} is necessary, for some values of $t$ and $k$, to substitute Theorem~\ref{th:10} in the inductive step. For example, if $t=4$ and $k=3$, then $j =1$ is less than $5/8 (t/2) = 10/8$, so Theorem~\ref{th:10} cannot be applied and we need Lemma~\ref{lem:11}, which can be applied because $j \ge \lceil (5/8)\cdot (t/2) \rceil -1$ holds.)

Also $|right(\ell)\cap R|=3(j+1)-\frac{t}{2}$ and $|right(\ell)\cap B|=(j+1)+2\cdot\frac{t}{2}$ and, by induction,
\[
{\mathcal{U}(right(\ell)\cap (R\cup B))}\leq \mathcal{U}\left(3(j+1)-\frac{t}{2},(j+1)+2\cdot\frac{t}{2}\right)\leq\frac{t}{2}.
\]

Therefore $\mathcal{U}(R\cup B)\leq 2\cdot\frac{t}{2}=t$.

\item[(b)] If $|left(\ell_1)\cap R|=3j+1-\frac{t}{2}=3j-(\frac{t}{2}-1)$, then $|left(\ell_2)\cap R|=3j-(\frac{t}{2}-1)$, $|left(\ell_1)\cap B|=|left(\ell_2)\cap B|=j+2(\frac{t}{2}-1)+1$, and $(right(\ell_1)\cap right(\ell_2))\cap (R\cup B)$ contains a red point and three blue points.
Moreover, $2j+1\geq\frac{5}{8}t$ implies $j\geq\frac{5}{8}(\frac{t}{2}-1)$. Therefore, by induction,
\begin{multline*}
\mathcal{U}(R\cup B)\leq 2\ \mathcal{U}\left(3j-\left(\frac{t}{2}-1\right),j+2\left(\frac{t}{2}-1\right)+1\right)+\mathcal{U}(1,3)\\
\leq 2\left(\left(\frac{t}{2}-1\right)+1\right)+0=t.
\end{multline*}
\end{itemize}

\item Suppose that $t$ is odd and $k$ is even.

Let $k=2j$ for some integer $j$. Consider two parallel lines $\ell_1$ and $\ell_2$, with opposite directions, such that $|left(\ell_1)\cap (R\cup B)|=|left(\ell_2)\cap (R\cup B)|=4j+\frac{t-1}{2}$. By Lemma~\ref{lem:10}, we can further assume that the unique point in $right(\ell_1)\cap right(\ell_2)$ is blue. Since all $6j-t$ red points are in $left(\ell_1)\cup left(\ell_2)$, then without loss of generality $|left(\ell_1)\cap R|\leq 3j-\frac{t+1}{2}$ and $|left(\ell_2)\cap R|\geq 3j-\frac{t-1}{2}$. Thus, Lemma~\ref{lem:9} guarantees the existence of a line~$\ell$ such that $|left(\ell)\cap (R\cup B)|=4j+\frac{t-1}{2}$ and $|left(\ell)\cap R|=3j-\frac{t-1}{2}$. Then, $|left(\ell)\cap B|=j+2\cdot\frac{t-1}{2}$, where $\frac{t-1}{2}$ is a nonnegative integer and $2j\geq\frac{5}{8}t$ implies that $j\geq \frac{5}{8}\cdot\frac{t-1}{2}$. By induction,
\[
{\mathcal{U}(left(\ell)\cap (R\cup B))}\leq \mathcal{U}\left(3j-\frac{t-1}{2},j+2\cdot\frac{t-1}{2}\right)\leq \frac{t-1}{2}.
\]
Also $|right(\ell)\cap R|=3j-\frac{t+1}{2}$ and $|right(\ell)\cap B|=j+2\cdot\frac{t+1}{2}$, where $\frac{t+1}{2}$ is a positive integer and $2j\geq\frac{5}{8}t$ implies that either $j\geq\frac{5}{8}\cdot\frac{t+1}{2}$ or $\frac{t+1}{2}\not=1, 3$ and $j\geq\lceil\frac{5}{8}\cdot\frac{t+1}{2}\rceil-1$. By induction and Lemma~\ref{lem:11},
\[
{\mathcal{U}(right(\ell)\cap (R\cup B))}\leq \mathcal{U}\left(3j-\frac{t+1}{2},j+2\cdot\frac{t+1}{2}\right)\leq\frac{t+1}{2}.
\]

Therefore $\mathcal{U}(R\cup B)\leq\frac{t-1}{2}+\frac{t+1}{2}=t$.

\item Suppose that $t$ and $k$ are odd.

Let $k=2j+1$ for some integer $j$ {(recall that $(k,j)\neq (1,1)$, so $j>0$)}. Consider two parallel lines $\ell_1$ and $\ell_2$, with opposite directions, such that
\[
|left(\ell_1)\cap (R\cup B)|=|left(\ell_2)\cap (R\cup B)|=4j+\frac{t-1}{2}.
\]

\begin{itemize}
\item[(a)] If one of $|left(\ell_1)\cap R|$ or $|left(\ell_2)\cap R|$ is at most $3j-\frac{t-1}{2}$ and the other is at least $3j-\frac{t-1}{2}$, then Lemma~\ref{lem:9} guarantees the existence of a directed line $\ell$ such that $|left(\ell)\cap (R\cup B)|=4j+\frac{t-1}{2}$ and $|left(\ell)\cap R|=3j-\frac{t-1}{2}$. Then, $|left(\ell)\cap B|=j+2\cdot\frac{t-1}{2}$, where $\frac{t-1}{2}$ is a nonnegative integer and $2j+1\geq\frac{5}{8}t$ implies that either $j\geq \frac{5}{8}\cdot\frac{t-1}{2}$ or $j\geq\lceil\frac{5}{8}\cdot\frac{t-1}{2}\rceil-1$ and $\frac{t-1}{2}\not=1,3$. By induction and Lemma~\ref{lem:11},
\[
{\mathcal{U}(left(\ell)\cap (R\cup B))}\leq \mathcal{U}\left(3j-\frac{t-1}{2},j+2\cdot\frac{t-1}{2}\right)\leq\frac{t-1}{2}.
\]
    {Also $|right(\ell)\cap R|=3(j+1)-\frac{t+1}{2}$ and $|right(\ell)\cap B|=(j+1)+2\cdot\frac{t+1}{2}$, where $\frac{t+1}{2}$ is a positive integer and $2j+1\geq\frac{5}{8}t$ implies that  $j+1\geq\frac{5}{8}\cdot\frac{t+1}{2}$}. By induction,
    \[{\mathcal{U}(right(\ell)\cap (R\cup B))\leq \mathcal{U}\left(3(j+1)-\frac{t+1}{2},j+1+2\cdot\frac{t+1}{2}\right)\leq\frac{t+1}{2}.}\]

    Therefore $\mathcal{U}(R\cup B)\leq\frac{t-1}{2}+\frac{t+1}{2}=t$.

    {Notice that, if there is no direction of $\ell_1 $ such that one of $|left(\ell_1)\cap R|$ or $|left(\ell_2)\cap R|$ is at most $3j-\frac{t-1}{2}$ and the other is at least $3j-\frac{t-1}{2}$, then, by Lemma~\ref{lem:9}, necessarily either $|left(\ell_1)\cap R|> 3j-\frac{t-1}{2}$ and $|left(\ell_2)\cap R|> 3j-\frac{t-1}{2}$ for every direction of $\ell_1 $ or $|left(\ell_1)\cap R|< 3j-\frac{t-1}{2}$ and $|left(\ell_2)\cap R|< 3j-\frac{t-1}{2}$ for every direction of $\ell_1$.}
\item[(b)] If $|left(\ell_1)\cap R|\geq 3j+1-\frac{t-1}{2}$ and $|left(\ell_2)\cap R|\geq 3j+1-\frac{t-1}{2}$ for every direction of $\ell_1 $, then
\begin{multline*}
|right(\ell_1)\cap right(\ell_2)\cap R|\leq 3k-t-2\left(3j+1-\frac{t-1}{2}\right)\\
=6j+3-t-6j-2+(t-1)=0.
\end{multline*}

That is, all the points in $(right(\ell_1)\cap right(\ell_2))\cap (R\cup B)$ are blue for every direction of $\ell_1$. But this is not possible as Lemma~\ref{lem:10} guarantees the existence of a direction where a point in $right(\ell_1)\cap right(\ell_2)$ is red.

\item[(c)] Finally, suppose that $|left(\ell_1)\cap R|\leq 3j-1-\frac{t-1}{2}$ and $|left(\ell_2)\cap R|\leq 3j-1-\frac{t-1}{2}$ for every direction of $\ell_1$. Since $|B|=k+2t\geq {5}$ {because $t$ and $k$ are odd, $t\ge 1$ and $k\ge 3$}, by Lemma~\ref{lem:10} there exists a directed line $\ell$ through a blue point $x$, such that  $|left(\ell)\cap (R\cup B)|=4j+\frac{t-1}{2}$. In this case, start with $\ell_1$ in the same direction as $\ell$. Hence, at most $4$ of the $5$ points in $(right(\ell_1)\cap right(\ell_2))\cap (R\cup B)$ are red ($x$ is one of these $5$ points and $x$ is blue) and so

\[|left(\ell_1)\cap R|=|left(\ell_2)\cap R|=3j-1-\frac{t-1}{2}=3j-\frac{t+1}{2}.\]

Then, $(left(\ell)\cap (R\cup B))\cup\{x\}$ is a $(3j-\frac{t+1}{2},j+2\cdot\frac{t+1}{2})$-set and $right(\ell)\cap (R\cup B)$ is a $(3(j+1)-\frac{t-1}{2},(j+1)+2\cdot\frac{t-1}{2})$-set, where $\frac{t\pm 1}{2}$ are nonnegative integers and $2j+1\geq\frac{5}{8}t$ implies that $j+1\geq\frac{5}{8}\cdot\frac{t-1}{2}$ and either $j\geq\frac{5}{8}\cdot\frac{t+1}{2}$ or $j\geq\lceil\frac{5}{8}\cdot\frac{t+1}{2}\rceil-1$ and $\frac{t+1}{2}\not=1$ or $3$.
By induction and Lemma~\ref{lem:11},
\begin{multline*}
\mathcal{U}(R\cup B)\leq \mathcal{U}\left(3j-\frac{t+1}{2},j+2\cdot\frac{t+1}{2}\right)\\
+\mathcal{U}\left(3(j+1)-\frac{t-1}{2},(j+1)+2\cdot\frac{t-1}{2}\right)\leq\frac{t+1}{2}+\frac{t-1}{2}=t.
\end{multline*}
\end{itemize}

\end{enumerate}

\subsection{Lower bound when $4/5 \le \alpha \le 1$}\label{sect:upperrange}

Now, let us prove the lower bound $\frac{8}{9}(r+b-8)$ on the number of points that can be always $K_{1,3}$-covered, when $4/5 \le \alpha \leq 1$. To this end, we use the following result.

\begin{theorem} [Kaneko, Kano and Suzuki~\cite{KKS2004}]\label{th:5}
Let $s\ge 1$, $g\ge 0$ and $h\ge 0$ be integers such that $g+h\ge 1$. Assume that $|R|=(s+1)g+sh$ and $|B|={sg}+(s+1)h$. Then, there exists a subdivision $X_1\cup\cdots\cup X_g\cup Y_1\cup\cdots\cup Y_h$ of the plane into $g+h$ disjoint convex regions such that every $X_i$ contains exactly $s+1$ red points and $s$ blue points and every $Y_j$ contains exactly $s$ red points and $s+1$ blue points.
\end{theorem}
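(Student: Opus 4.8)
The plan is to prove Theorem~\ref{th:5} by induction on $n:=g+h$. At each step I would cut the plane with one directed line into two open half‑planes, arrange that the two resulting sub‑configurations are again valid instances of the theorem with strictly smaller parameters, apply the induction hypothesis to each, and take the union of the two subdivisions. Since each sub‑subdivision lives entirely inside one of the two half‑planes, no region of one crosses a region of the other, so the union is a convex subdivision of the whole plane with the prescribed region counts.

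\emph{Base and degenerate cases.} If $n=1$ the single region is the whole plane. If $h=0$ (symmetrically $g=0$) then $|R|=(s+1)g$ and $|B|=sg$, and Theorem~\ref{th:C} with $c=s+1$, $d=s$ already produces $g$ convex regions each containing $s+1$ red and $s$ blue points; this settles every instance in which only one region type occurs. Since the statement is invariant under simultaneously interchanging the two colours and the roles of $X$ and $Y$, we may assume $g\ge h\ge1$, hence $n\ge2$ and $|R|,|B|>0$.

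\emph{The cut.} Fix an integer $n_1$ with $1\le n_1\le n-1$ and put $m=(2s+1)n_1$. If a directed line $\ell$ satisfies $|left(\ell)\cap(R\cup B)|=m$ and has exactly $q$ red points on its left, then $left(\ell)\cap(R\cup B)$ is a valid instance with parameters $s,g_1,h_1$, where $g_1=q-sn_1$ and $h_1=n_1-g_1$, as soon as $g_1\in[0,n_1]$, $g-g_1\ge0$ and $h-h_1\ge0$; a short computation using $0\le g\le n$ shows that these constraints amount to asking $q$ to lie in the nonempty integer interval $[q_-,q_+]\subseteq[sn_1,(s+1)n_1]$ with $q_-=sn_1+\max(0,n_1-h)$ and $q_+=sn_1+\min(n_1,g)$, depending only on $n_1$, $g$ and $h$, and that then $right(\ell)\cap(R\cup B)$ is automatically the complementary instance with parameters $s,g-g_1,h-h_1$. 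Hence it suffices to exhibit, for some admissible $n_1$, a directed line cutting off exactly $(2s+1)n_1$ points with red‑count in $[q_-,q_+]$. When $|R|$ and $|B|$ are both even this is immediate from the Ham‑Sandwich theorem with $n_1=n/2$: each side then receives $|R|/2$ red and $|B|/2$ blue points, and one checks directly that $|R|/2\in[q_-,q_+]$ and that $|R|/2-sn_1=g/2$ is an integer. The parity bookkeeping in the general case serves only to reach or approach this balanced situation, for instance by first trimming with one line a single region — or one region of each type, when that is what the parities of $g$ and $h$ force — so as to make $|R|$ and $|B|$ even.

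\emph{The crux.} Everything thus rests on a cut‑off lemma: \emph{given a bicoloured point set whose global counts are compatible with the theorem, and a target pair $(m,q)$ of the shape above, there is a directed line cutting off exactly $m$ points of which exactly $q$ are red.} I would prove this by a continuity argument in the spirit of Lemmas~\ref{lem:9} and~\ref{lem:10}: rotating a directed line that keeps exactly $m$ points on its left, the number $\rho$ of red points among them changes by at most one at each combinatorial event, so over the connected circle of directions $\rho$ attains every value in an interval $[\rho_{\min},\rho_{\max}]$, and one must check that this interval meets $[q_-,q_+]$ for at least one admissible $n_1$. This last step is the main obstacle, and it is genuinely delicate: it is \emph{not} true that a single half‑plane can always peel off a prescribed region — "clustered" configurations, in which all minority‑colour points are buried deep inside the convex hull of the majority, can force $[\rho_{\min},\rho_{\max}]$ to miss the count of one region — so the near‑balance $n_1\approx n/2$ of the cut, together with the freedom to vary $n_1$, is essential. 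The substantive task is to combine the global equalities $|R|=(s+1)g+sh$, $|B|=sg+(s+1)h$ with the hypotheses $g\ge h\ge1$ to guarantee that some near‑balanced cut is neither "too red" nor "too blue"; this is exactly where the content of the theorem lies, and once it is established the induction closes with only routine checking.
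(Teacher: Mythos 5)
You have set up the induction correctly (the parameter bookkeeping for the two sides of a cut, the disjointness of the recursive subdivisions, and the Ham--Sandwich treatment of the case where $g$ and $h$ are both even are all fine), but the argument stops at the step you yourself call ``the crux'', and that step is not a routine verification: it is the entire theorem. Your cut-off lemma asks for a \emph{line} with exactly $(2s+1)n_1$ points on one side, of which a prescribed number are red; you give no argument that the interval of red-counts realized by rotating such a line actually meets $[q_-,q_+]$ for some admissible $n_1$, and you explicitly concede that clustered configurations can make it miss. This is not a repairable omission within your framework: the reason Bespamyatnikh, Kirkpatrick and Snoeyink introduced equitable \emph{3-cuttings} --- partitions of the plane into three convex wedges by three concurrent rays --- is precisely that an equitable cut by a single line does not always exist, so an induction that only ever splits the plane into two half-planes cannot go through in general. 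Likewise, your fallback of ``first trimming with one line a single region'' to fix parities is itself an instance of the unproved cut-off claim.

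The proof of Theorem~\ref{th:5} in~\cite{KKS2004}, as recalled in Section~\ref{sect:complexity}, proceeds differently at exactly this point: one shows (this is the content of Theorem~\ref{th:g-h}) that among the candidate targets $(g',h')$ there are always either two complementary couples or three couples summing to $(g,h)$ on which the sign function $\operatorname{sg}$ agrees; in the first case a rotation argument in the spirit of Lemma~\ref{lem:9} yields an equitable 2-cutting, and in the second case Theorem~\ref{th:3-cut} yields an equitable 2- or 3-cutting. The induction then recurses on two \emph{or three} convex pieces, not necessarily two half-planes. Your proposal is therefore best read as a correct reduction of the theorem to a single-line cut-off statement that is strictly stronger than what is true; to close the gap you must replace that statement by the 2-or-3-cutting alternative and supply the sign analysis guaranteeing that one of the two always exists.
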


{In order to obtain the upper bound, we now remove some red and blue points to build a point set with $5g+4h$ red points and $4g+5h$ blue points, for some values $g$ and $h$. Then, applying the previous theorem, we divide the plane into convex regions such that each of them contains either 5 red points and 4 blue points or 4 red points and 5 blue points. Using Theorem~\ref{th:10}, in each region 8 of the 9 points can be covered.}

\begin{theorem}\label{th:1}
Let $r$ and $b$ be positive integers and $\alpha = \frac{b}{r}$. If
$\frac{4}{5} \le \alpha \le 1$, then
\[
\mathcal{C}(r,b) \ge \frac{8}{9}(r+b-8)
\]
\end{theorem}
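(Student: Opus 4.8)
The goal is to cover at least $\frac{8}{9}(r+b-8)$ points when $\frac45\le\alpha\le1$, i.e.\ when $b\le r\le\frac54 b$. The natural strategy is to mimic what was done for $\frac13\le\alpha\le\frac45$ in Corollary~\ref{cor:8_9_Lower}, but using Theorem~\ref{th:5} (the Kaneko--Kano--Suzuki subdivision into regions with $s{+}1$ points of one color and $s$ of the other) in place of Theorem~\ref{th:C}. The key observation is that a region with $s+1$ red and $s$ blue points (or vice versa) contains $2s+1$ points, and among any such block we can $K_{1,3}$-cover a large fraction: if $2s+1=4q+\rho$ with $0\le\rho\le3$, a $\{1,3\}$-equitable sub-block of $4q$ points can be covered (after discarding the right number of points via Theorem~\ref{th:3.4}), leaving at most $3$ uncovered per region. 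The quantitative heart is choosing $s$ as large as possible subject to the color-ratio constraint forced by Theorem~\ref{th:5}.

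\textbf{Key steps, in order.} First I would, given $r$ and $b$ with $b\le r\le\frac54b$, pick the largest integer $s$ and nonnegative integers $g,h$ with $g+h\ge1$ such that $(s+1)g+sh\le r$, $sg+(s+1)h\le b$, and the leftover red points $r'=r-((s+1)g+sh)$ and leftover blue points $b'=b-(sg+(s+1)h)$ are small (a bounded number, independent of $r,b$; the slack needed is $O(1)$ because one is essentially solving $s(g+h)+g\approx r$, $s(g+h)+h\approx b$, and $|r-b|\le\frac14b$ controls $|g-h|$). Second, apply Theorem~\ref{th:5} to the chosen $(s,g,h)$ to get $g+h$ disjoint convex regions, each with $2s+1$ points. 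Third, inside each region discard at most $3$ points to reach a $\{1,3\}$-equitable set and apply Theorem~\ref{th:3.4} to $K_{1,3}$-cover the rest; since the regions are disjoint convex sets, the coverings do not cross each other. Fourth, add back the $O(1)$ previously-set-aside points as uncovered, and tally: the number of uncovered points is at most $3(g+h)+O(1)$. Fifth, bound $3(g+h)$ against $\frac19(r+b)$: since each region has $2s+1\ge 2s+1$ points and $r+b=(2s+1)(g+h)+r'+b'$, one gets $g+h=\frac{r+b-r'-b'}{2s+1}$, so the uncovered fraction is roughly $\frac{3}{2s+1}$, and one needs $2s+1\ge 27$, i.e.\ $s\ge13$, to beat $\frac19$. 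Handle $r+b$ small (where $s$ cannot be taken $\ge13$) by absorbing everything into the additive $-8$ slack, exactly as the analogous corollary did with its $-4$.

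\textbf{Main obstacle.} The delicate point is the interplay between the color-ratio rigidity of Theorem~\ref{th:5} and the window $\frac45\le\alpha\le1$: Theorem~\ref{th:5} only produces regions of type $(s{+}1,s)$ and $(s,s{+}1)$, so the global ratio it can realize is pinned to $\frac{(s+1)g+sh}{sg+(s+1)h}$, which for a given $s$ ranges over $[\frac{s}{s+1},\frac{s+1}{s}]$ as $(g,h)$ vary; I must check that for every target $\alpha$ in $[\frac45,1]$ there is a choice of $s\ge13$ and $(g,h)$ hitting $(r,b)$ up to a bounded additive error, which forces the bound $\frac45$ (since $\frac{s}{s+1}\ge\frac{13}{14}$ is too tight, one actually wants $\alpha$ near $1$, and the crossover to the other regime happens exactly at $\frac45$ because that is where $\frac47\cdot\frac{\alpha+2}{\alpha+1}$ equals $\frac89$). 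Making the ``up to a bounded additive error'' precise — i.e.\ solving the two near-equalities in integers and verifying the leftover counts $r',b'$ are $O(1)$ and that one can always reduce each region to a multiple of $4$ losing at most $3$ — is the routine-but-careful bookkeeping, and I expect it to be where all the real work sits; the geometric content is entirely outsourced to Theorems~\ref{th:5} and~\ref{th:3.4}.
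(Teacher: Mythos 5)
Your overall skeleton (set aside $O(1)$ points, apply Theorem~\ref{th:5}, cover inside each convex region, tally) matches the paper's, but the two quantitative steps at the heart of your plan are both wrong, and each one alone breaks the proof. First, a region produced by Theorem~\ref{th:5} contains $s+1$ points of one color and $s$ of the other, i.e.\ its color ratio is essentially $1:1$; you cannot ``discard at most $3$ points to reach a $\{1,3\}$-equitable set,'' because $\{1,3\}$-equitable means a $3:1$ ratio ($r=3b$ or $b=3r$), not merely ``a multiple of $4$ points.'' From a $(14,13)$-region, say, the largest $\{1,3\}$-equitable subset has only $16$ points, so your per-region loss is a constant \emph{fraction}, not an additive $3$. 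The correct tool for a near-balanced region is Theorem~\ref{th:10}: a region with $s+1$ red and $s$ blue points is a $(3k-t,k+2t)$-set with $7t=2s-1$, so the guaranteed loss per region is $t\approx 2s/7$, i.e.\ a fraction $\approx 1/7$ of the region for large $s$ --- which is \emph{worse} than $1/9$. The loss fraction $t/(2s+1)=(2s-1)/(7(2s+1))$ is minimized over admissible $s$ at $s=4$, where it equals exactly $1/9$; this, not ``$s$ as large as possible,'' is why the paper fixes $s=4$ and gets $8$ covered points out of each $9$-point region via Theorem~\ref{th:10} with $k=2$, $t=1$.

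Second, your requirement $s\ge 13$ is incompatible with the range of ratios Theorem~\ref{th:5} can realize: as you yourself observe, for fixed $s$ the achievable global ratio lies in $\bigl[\tfrac{s}{s+1},\tfrac{s+1}{s}\bigr]$, and $\tfrac{4}{5}<\tfrac{13}{14}$, so no choice of $(g,h)$ with $s\ge 13$ can hit an $(r,b)$ with $\alpha$ near $\tfrac45$ up to bounded error. You flag this tension in your ``main obstacle'' paragraph but never resolve it; in fact it is unresolvable, since $s=4$ is precisely the largest $s$ for which $\tfrac45$ is in range (and, pleasantly, also the optimizer of the loss fraction). The paper's actual argument writes $5b-4r=9n+m$ with $0\le m\le 8$, removes at most $8$ points to land exactly on a $(4h+5g,\,5h+4g)$-set, partitions via Theorem~\ref{th:5} with $s=4$ into $g+h$ nine-point regions, and covers $8$ points in each by Theorem~\ref{th:10}, giving $\mathcal{C}(r,b)\ge 8(g+h)\ge\frac{8}{9}(r+b-8)$. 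To repair your write-up you would need to replace your per-region covering step by an appeal to Theorem~\ref{th:10} (or an equivalent statement that $\mathcal{U}(5,4)\le 1$) and fix $s=4$ rather than taking $s$ large.
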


\begin{proof}
Assume $r+b \ge 9$, otherwise there is nothing to prove. Because
$\frac{4}{5} \le \alpha \le 1$, it follows that $4r \le 5b \le 5r$.
Write the nonnegative integer $5b-4r=9n+m$ for nonnegative integers
$n$ and $m$, $0 \le m \le 8$. Define
\[
(h,g) = \begin{cases} (n,r-b+n) &\mbox{if } 0 \le m \le 4 \\
(n+1,r-b+n) & \mbox{if } 5 \le m \le 8 \end{cases}
\]
Observe that if $0 \le m \le 4$, then $5h+4g=b-m$ and $4h+5g=r-m$;
and if $5 \le m \le 8$, then $5h+4g=b-(m-5)$ and $4h+5g=r-(m-4)$. In
either case, after removing $m$ blue points and $m$ red points (in
the first case) or $m-5$ blue points and $m-4$ red points (in the
second case), we end up with a set having $5h+4g$ blue points and
$4h+5g$ red points. According to Theorem~\ref{th:5} with $s=4$ (note
that $h+g \ge 1$ since $r+b \ge 9$), the plane can be partitioned
into $g+h$ disjoint convex regions $X_1\cup\cdots\cup X_g\cup
Y_1\cup\cdots\cup Y_h$ such that every $X_i$ contains exactly $5$
red points and $4$ blue points, and every $Y_j$ contains exactly $4$
red points and $5$ blue points. Finally, by Theorem~\ref{th:10}
(with $k=2$ and $t=1$), eight points in every $X_i$ and $Y_j$ can be
$K_{1,3}$-covered. Thus $\mathcal{C}(r,b) \ge \mathcal{C}(4h+5g,
5h+4g) \ge 8(h+g) \ge \frac{8}{9}(r+b-8)$.
\end{proof}

\subsection{Computing a covering of at least $\frac{8}{9}(r+b-8)$ points}\label{sect:complexity}

In this section we show how to compute a covering of at least $\frac{8}{9}(r+b-8)$ points for an $(r,b)$-set $S$ using an algorithm whose running time is $O(N^{\frac{4}{3}}\log^3(N))$, where $N=r+b$. {In particular, we prove the following theorem.}

\begin{theorem}\label{th:20}
Let $r\ge b$ be positive integers such that $\frac{1}{3}\leq \frac{b}{r} \leq 1$. If $r$ red points and $b$ blue points are given in the plane in general position, then at least $\frac{8}{9}(r+b)-4$ points can be $K_{1,3}$-covered in $O(N^{\frac{4}{3}}\log^3(N))$ time, where $N=r+b$.
\end{theorem}

{The proof of the theorem follows from the discussion in Subsections~\ref{subsub:first} and~\ref{subsub:second}. In these subsections, we study the complexity of finding such a covering when $1/3 \le \alpha \leq 4/5$ and when $4/5 \le \alpha \leq 1$, respectively.}

{\subsubsection{Computing a covering when $1/3 \le \alpha \leq 4/5$}\label{subsub:first}}

Let $1/3\le\alpha\leq 4/5$. Theorem~\ref{th:B-S} guarantees that at least $\frac{4}{7}\left(\frac{\alpha+2}{\alpha+1}\right)(r+b)-4$ points are covered, and this amount is at least $\frac{8}{9}(r+b-8)$ when $1/3 \le \alpha \leq 4/5$ (Corollary~\ref{cor:8_9_Lower}). The bound of the theorem is obtained by transforming the $(r,b)$-set into a $(3k-t,k+2t)$-set by removing $3\lceil \frac{s}{3}\rceil -s$ red points and $\lceil \frac{s}{3}\rceil$ blue points, where $k, t$ and $s$ are defined according to the formulas given in the proof of the theorem. Therefore, we only need to show how to cover with stars a $(3k-t,k+2t)$-set leaving uncovered at most $t$ points.

The proofs of Theorem~\ref{th:10} and Lemma~\ref{lem:11} provide methods to obtain such coverings for $(3k-t,k+2t)$-sets. The proof of Lemma~\ref{lem:11}, that corresponds to the particular case $k=\lceil\frac{5}{8}t\rceil-1$, is based on removing $i\le 8$ blue points to transform the $(r,b)$-set into a $\{1,3\}$-equitable set. By Theorem~\ref{th:3.4}, a $K_{1,3}$-covering can be found in $O(N^{\frac{4}{3}}\log^3(N))$ time.

In the proof of Theorem~\ref{th:10}, induction is applied in the four cases, dividing the original problem into two (disjoint) subproblems roughly involving half of the points each. Thus, a covering for a $(3k-t,k+2t)$-set can be found by building coverings for the corresponding subproblems. Note that, if $t=0$, then the $(3k-t,k+2t)$-set is a $\{1,3\}$-equitable set and, by Theorem~\ref{th:3.4}, a $K_{1,3}$-covering can be computed in $O(N^{\frac{4}{3}}\log^3(N))$ time.

Let $T(k,t)$ denote the complexity of finding a covering for a $(3k-t,k+2t)$-set, leaving uncovered at most $t$ points. According to Theorem~\ref{th:10}, \[T(k,t) \le T(k_1,t_1) + T(k_2,t_2) + O(N^{\frac{4}{3}}\log (N)) + O(N\log(N))\]
where $N=4k+t$. Summands $O(N^{\frac{4}{3}}\log (N))$ and $O(N\log(N))$ appear when Lemmas~\ref{lem:9} and~\ref{lem:10} are applied, respectively, and $k_1,k_2 \in \{\lfloor \frac{k}{2}\rfloor , \lceil \frac{k}{2}\rceil\}$ and $t_1,t_2 \in \{\frac{t}{2} - 1, \lfloor \frac{t}{2}\rfloor , \lceil \frac{t}{2}\rceil\}$. The exact values depend on the parity of $k$ and $t$. As $T(k,t) = O(N^{\frac{4}{3}}\log^3(N))$ when $t=0$ and $k=\lceil\frac{5}{8}t\rceil-1$, using standard techniques we can easily solve the recurrence, whose solution with equality is also $T(k,t) = O(N^{\frac{4}{3}}\log^3(N))$.

{\subsubsection{Computing a covering when $4/5 \le \alpha \leq 1$}\label{subsub:second}}

Now, let $4/5 \le \alpha \leq 1$. Given an $(r,b)$-set, by removing at most $4$ red points and $4$  blue points (as in the proof of  Theorem~\ref{th:1}), we obtain a $(4h+5g,4g+5h)$-set, for some integers $h$ and $g$. Applying Theorem~\ref{th:5}, we obtain $g+h$ convex regions containing 5 points of one color and 4 points of the other color each. Note that the 9 points in one these regions form a $(3k-t,k+2t)$-set (or a $(k+2t,3k-t)$-set) with $k=2$ and $t=1$. Instead of applying Theorem~\ref{th:10} to cover 8 of the 9 points for each one of these $(3k-t,k+2t)$-sets, such particular case allows us to provide an alternative method.

Assume we are given 5 red points and 4 blue points. Take a blue point $q$ and add a new blue point $q'$ very close to $q$. By the Ham-Sandwich theorem, there is a bisector~$\ell$ passing through one red point $p$ and one blue point $q''$, which can be found in linear time~\cite{Lo1994}. Suppose that $q'' = q$ and that, without loss of generality, $q'$ is in $right(\ell)$. By deleting $q'$, the $4$ points in $right(\ell)\cup\{p\}$ can be $K_{1,3}$-covered, $3$ blue points and $1$ red point in $left(\ell)\cup\{q\}$ can be $K_{1,3}$-covered, and these two $K_{1,3}$-coverings have no crossings. Similar reasonings apply to find the two stars when $q'' = q'$ or $q''\ne q,q'$. As a consequence, building all stars covering at least $\frac{8}{9}(r+b-8)$ points only requires linear time, after computing a partition of the plane as described in Theorem~\ref{th:5}. Therefore, the overall complexity depends on the complexity of computing such a partition. {The rest of this subsection is devoted to compute this partition in $O(N^{\frac{4}{3}}\log^3(N))$ time.}

Kaneko et al.~\cite{KKS2004} proved the existence of the partition, but do not provide any algorithm to find it. Following their proof and using the results from Bespamyatnikh et al.~\cite{BKS2000} and from Brodal and Jacob~\cite{BJ02}, we show how to compute the partition in $O(N^{\frac{4}{3}}\log^3(N))$ time.

First of all, we recall some definitions and results given in~\cite{BKS2000}. Let $S=R\cup B$ be a bicolored point set. A \emph{2-cutting} is a partition of the plane by a line $\ell$ into two halfplanes. Given a line $\ell$ and two integers $r$ and $b$, a 2-cutting is \emph{equitable} (or $(r,b)$-equitable)  if $|left(\ell)\cap R| = r$ and $|left(\ell)\cap B| = b$. A \emph{3-cutting} is a partition of the plane into three convex wedges $W_1, W_2$ and $W_3$ by three rays with a common point called the apex of the 3-cutting.  Given six integers $r_1, r_2, r_3, b_1, b_2, b_3$ such that $r_1+r_2+r_3=|R|$ and $b_1+b_2+b_3=|B|$, a 3-cutting is \emph{equitable} (or $(r_1, r_2, r_3, b_1, b_2, b_3)$-equitable) if each wedge~$W_i$ contains exactly $r_i$ red points and $b_i$ blue points. An equitable 2-cutting can be seen as a special case of an equitable 3-cutting, taking $r_2=b_2=0$, $r_3=|R|-r_1$, and $b_3=|B|-b_1$.

Choose $|R|$ vertical lines $\ell_i$ such that, for each line $\ell_i$, $1\le i\le |R|$, $|left(\ell_i)\cap R| = i$. Given two integers $i$ and $j$, we define $\operatorname{sign}(i,j) = -$, if $|left(\ell_i)\cap B| < j$, and $\operatorname{sign}(i,j) = +$, if $|left(\ell_i)\cap B| > j$. {This function controls whether the number of blue points to the left of $\ell_i$ is less than $j$ or not, when the number of red points to the left of $\ell_i$ is precisely $i$.} Note that if $|left(\ell_i)\cap B| = j$, then $\ell_i$ defines an $(i,j)$-equitable 2-cutting. The following theorem, which characterizes the existence of an equitable 3-cutting, was originally proved in~\cite{BKS2000} under the assumption that $\frac{r_1}{b_1} = \frac{r_2}{b_2} = \frac{r_3}{b_3}$. However, as mentioned in~\cite{KK2003,KKS2004}, this condition can be removed without changing the arguments in the proof given in~\cite{BKS2000}. Moreover, their algorithm to build an equitable 3-cutting also works in this more general setting.

\begin{theorem}[\cite{BKS2000,BJ02}]\label{th:3-cut}
Let $r_1, r_2, r_3, b_1, b_2, b_3$ be positive integers such that $|R| = r_1 +r_2 +r_3$ and $|B| = b_1+b_2+b_3$. If $\operatorname{sign}(r_1,b_1) = \operatorname{sign}(r_2,b_2) = \operatorname{sign}(r_3,b_3)$, then there exists either a $(r_1, r_2, r_3, b_1, b_2, b_3)$-equitable 3-cutting or a $(r_i,b_i)$-equitable 2-cutting for some $i=1,2,3$. Moreover, an equitable 3-cutting (2-cutting) can be computed in $O(N^{\frac{4}{3}}\log^2(N))$ time, with $N=|R|+|B|$.
\end{theorem}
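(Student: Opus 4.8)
The plan is to follow the classical approach for equitable partitioning results of this type (as in Bespamyatnikh, Kirkpatrick, and Snoeyink~\cite{BKS2000}), combining a divide-and-conquer strategy based on 2-cuttings and 3-cuttings with an induction on the number of regions $r_1+r_2+r_3$ (here on $|R|+|B|$). First I would set up the vertical lines $\ell_1,\dots,\ell_{|R|}$ as described, so that $\ell_i$ has exactly $i$ red points to its left, and consider the function $i\mapsto |left(\ell_i)\cap B|$. Comparing this quantity with $b_i$ via the $\operatorname{sign}$ function, the hypothesis $\operatorname{sign}(r_1,b_1)=\operatorname{sign}(r_2,b_2)=\operatorname{sign}(r_3,b_3)$ says that at the three ``target'' indices $r_1$, $r_1+r_2$, and $r_1+r_2+r_3=|R|$ the sign is the same (say all $-$, the case $+$ being symmetric by swapping the roles of left and right, or colors). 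Note that at $i=|R|$ we always have $|left(\ell_i)\cap B|=|B|=b_1+b_2+b_3$, so $\operatorname{sign}(r_1+r_2+r_3,b_1+b_2+b_3)$ being $-$ is impossible unless $b_i=0$ somewhere; this forces the relevant discrete intermediate-value reasoning and is where one extracts either an equitable $2$-cutting directly or a place to recurse.

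The core of the argument is a discrete intermediate-value / continuity sweep. If some $\ell_i$ with $i=r_1$ already satisfies $|left(\ell_i)\cap B|=b_1$, we have a $(r_1,b_1)$-equitable $2$-cutting and are done. Otherwise, by the sign condition, the left half-plane of $\ell_{r_1}$ contains ``too few'' blue points relative to $b_1$ while the right half-plane correspondingly contains an excess; I would then rotate (or translate within a pencil of lines through an appropriate point) the separating line while keeping the red count fixed at $r_1$, invoking a Lemma~\ref{lem:9}-type intermediate value statement to realize every intermediate blue count. One either lands exactly on $b_1$ blue points (an equitable $2$-cutting), or one passes through a configuration where a line through a point of $S$ gives counts $(r_1,b_1')$ with $b_1'$ close to $b_1$ but with a red or blue point exactly on the line, which is precisely the situation needed to split off a $2$-cutting and recurse on a strictly smaller instance satisfying the same sign hypothesis on the remaining two parts. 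When no $2$-cutting can be extracted this way, the three rays of the desired $3$-cutting are assembled by choosing a common apex and arguing that the three wedge counts can be made simultaneously correct — this is the delicate part and follows~\cite{BKS2000}.

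The induction is on $|R|+|B|$ (equivalently on $r_1+r_2+r_3$, viewing each unit as contributing a fixed number of points): the base case is when one of the parts is empty or trivially small, handled directly; in the inductive step, having produced an equitable $2$-cutting separating off part, say, $(r_1,b_1)$, the remaining point set has parts $(r_2,b_2)$ and $(r_3,b_3)$ with the same common sign, so the induction hypothesis (or a degenerate $2$-part version of it) applies. For the algorithmic claim, the vertical lines $\ell_i$ and all the values $|left(\ell_i)\cap B|$ can be precomputed, and each sweep to find an equitable cutting costs $O(N^{4/3}\log N)$ using the $k$-set enumeration machinery of Edelsbrunner--Welzl improved via Brodal--Jacob's dynamic convex hull (exactly as in Lemma~\ref{lem:9}); since the recursion has $O(\log N)$ depth and the work at each level telescopes, the total is $O(N^{4/3}\log^2 N)$.

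The main obstacle I expect is the case analysis in the ``no $2$-cutting exists, so build a genuine $3$-cutting'' branch: one must choose the apex and the three rays so that all three wedges get exactly their prescribed red and blue counts simultaneously, and verifying that the common-sign hypothesis makes this possible (rather than merely getting two of the three counts right) requires a careful topological/parity argument about how the wedge counts vary as the apex and ray directions move. A secondary subtlety is the handling of degeneracies — points lying exactly on a sweeping line — which is exactly what lets the recursion terminate, so these ``bad'' configurations must be treated as the good case rather than as an obstruction; making the perturbation arguments precise (as in the $q'$ trick used later in Section~\ref{sect:complexity}) is routine but must be done carefully to preserve the sign conditions. Since the statement attributes the result to~\cite{BKS2000,BJ02} and only claims the mild generalization of dropping the proportionality condition $r_1/b_1=r_2/b_2=r_3/b_3$, I would explicitly point out that this condition is used in~\cite{BKS2000} only to name the cuttings and never enters the continuity/parity arguments, so the proof carries over verbatim, and likewise for the algorithm.
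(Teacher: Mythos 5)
The paper does not actually prove this theorem: it is quoted from Bespamyatnikh, Kirkpatrick and Snoeyink~\cite{BKS2000}, and the paper's entire justification consists of (i)~the remark, already made in~\cite{KK2003,KKS2004}, that the proportionality hypothesis $r_1/b_1=r_2/b_2=r_3/b_3$ imposed in~\cite{BKS2000} never enters their argument and can be dropped, and (ii)~the observation that replacing the Overmars--van Leeuwen convex-hull maintenance by the Brodal--Jacob structure~\cite{BJ02} saves one logarithmic factor, giving $O(N^{4/3}\log^2(N))$. Your closing paragraph says exactly this, so at that level your proposal coincides with what the paper does.

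However, your attempted reconstruction of the internal argument of~\cite{BKS2000} contains two concrete errors that would derail a written-out proof. First, you misread the sign condition: $\operatorname{sign}(r_i,b_i)$ is evaluated at the line $\ell_{r_i}$ having exactly $r_i$ red points to its left, so the hypothesis concerns three \emph{separate} lines $\ell_{r_1},\ell_{r_2},\ell_{r_3}$, not the cumulative indices $r_1$, $r_1+r_2$ and $r_1+r_2+r_3=|R|$. Your own observation that the sign at $(|R|,|B|)$ ``being $-$ is impossible'' should have flagged the misreading; with the correct reading no such degeneracy arises, and the common sign of the three independent comparisons is precisely what drives the construction of the apex of the 3-cutting. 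Second, the theorem is a one-shot existence statement --- a single equitable 3-cutting or a single equitable 2-cutting --- so there is no induction on $|R|+|B|$ and no base case with an empty part (all $r_i,b_i$ are required to be positive); the recursion you describe belongs to the later application (the algorithmic version of Theorem~\ref{th:5} developed in Section~\ref{sect:complexity}), not to this statement. Finally, the genuinely hard step --- producing the 3-cutting when no equitable 2-cutting exists --- is deferred in your proposal entirely to~\cite{BKS2000}; that is acceptable here only because the paper itself treats the result as a citation, but it means you have not supplied a proof of the part that actually requires one.
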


As in Theorem~\ref{th:C}, the running time of the Bespamyatnikh et al.~\cite{BKS2000} algorithm to find a 3-cutting can be improved from  $O(N^{\frac{4}{3}}\log^3(N))$ to $O(N^{\frac{4}{3}}\log^2(N))$ by using the Brodal and Jacob~\cite{BJ02} maintenance of a convex hull in $O(\log(N))$ time per update.

In addition, Theorem 9 in~\cite{BKS2000} shows how to compute in linear time values $r_1, r_2, r_3, b_1, b_2, b_3$ satisfying $\operatorname{sign}(r_1,b_1) = \operatorname{sign}(r_2,b_2) = \operatorname{sign}(r_3,b_3)$ (with the additional constraint that $\frac{r_1}{b_1} = \frac{r_2}{b_2} = \frac{r_3}{b_3}$) such that $r_i \le \lfloor \frac{2|R|}{3}\rfloor$ and $b_i \le \lfloor \frac{2|B|}{3}\rfloor$.

Now we can go back to Theorem~\ref{th:5}. Recall that given $|R|=(s+1)g+sh$ and $|B|={sg}+(s+1)h$, with $s\ge 1$, $g\ge 0$ and $h\ge 0$ integers such that $g+h\ge 1$, we are interested in finding a subdivision $X_1\cup\cdots\cup X_g\cup Y_1\cup\cdots\cup Y_h$ of the plane into $g+h$ disjoint convex regions such that every $X_i$ contains exactly $s+1$ red points and $s$ blue points and every $Y_j$ contains exactly $s$ red points and $s+1$ blue points. We use $\mathcal{P}$ to denote such a partition. The proof of Theorem~\ref{th:5} given in~\cite{KKS2004} is based on proving the existence of either an equitable 2-cutting, with $r=(s+1)g'+sh'$ and $b={sg}'+(s+1)h'$ for some integers $0\le g'\le g$ and $0\le h'\le h$ such that $g'+h'<g+h$, or an equitable 3-cutting such that each wedge $W_i$ contains exactly $(s+1)g_i+sh_i$ red points and $sg_i+(s+1)h_i$ blue points, with $g_1+g_2+g_3=g$ and $h_1+h_2+h_3=h$. Then, induction is applied to each of the subproblems defined by the cutting.

Therefore, to build a 2-cutting or a 3-cutting, we need to show how to find values $r_1, r_2, r_3, b_1, b_2, b_3$ such that $\operatorname{sign}(r_1,b_1) = \operatorname{sign}(r_2,b_2) = \operatorname{sign}(r_3,b_3)$ {(to guarantee that such a cutting exists as in Theorem~\ref{th:3-cut})}, with the additional constraint that $r_i = (s+1)g_i+sh_i$, $b_i = sg_i+(s+1)h_i$, $g_1+g_2+g_3=g$ and $h_1+h_2+h_3=h$ {(as required in the proof of Theorem~\ref{th:5} in~\cite{KKS2004})}. To this end, we define a new set of signs and prove Theorem~\ref{th:g-h}.

Observe first that if $h=1$, then by coloring a blue point $q$ in red, we have a set consisting of $(s+1)(g+1)$ red points and $s(g+1)$ blue points. Thus, by Theorem~\ref{th:C}, we can obtain an equitable subdivision in $O(N^{\frac{4}{3}}\log^2(N)\log(g))$ time such that each of the $g+1$ convex regions contains $s+1$ red points and $s$ blue points. Coloring $q$ again in blue, the desired partition $\mathcal{P}$ is obtained. Using a similar reasoning when $g=1$, we can assume that $g,h\ge 2$.

Given two integers $g'\le g$ and $h'\le h$, we define $\operatorname{sg}(g',h') = -$, if $|left(\ell_i)\cap B| < {sg}'+(s+1)h'$, $\operatorname{sg}(g',h') = 0$, if $|left(\ell_i)\cap B| = {sg}'+(s+1)h'$, and $\operatorname{sg}(g',h') = +$, if $|left(\ell_i)\cap B| > {sg}'+(s+1)h'$, where $i=(s+1)g'+sh'$. {Note that, if $\operatorname{sg}(g',h') = 0$, then~$\ell_i$ defines an equitable 2-cutting with $(s+1)g'+sh'$ red points and $sg'+(s+1)h'$ blue points to the left of $\ell_i$.} Henceforth, we assume that there is no equality unless otherwise stated. By exchanging the colors if necessary, we may assume $\operatorname{sg}(1,0)=-$. Given two integers $g'$ and $h'$, the amounts $(m+1)g'+mh'$ and $(m+1)(g'-1) + m(h'+1)$ differ only by one, implying the following trivial observation.

\begin{observation}\label{obs:1}
If $\operatorname{sg}(g',h') = -$ then $\operatorname{sg}(g'-1,h'+1) = -$.
\end{observation}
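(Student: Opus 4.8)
The plan is to unwind the definition of $\operatorname{sg}$ and reduce the statement to the obvious monotonicity of the blue count along the fixed family of vertical lines. Recall that $\ell_i$ denotes the vertical line with $|left(\ell_i)\cap R|=i$, so that $left(\ell_{i-1})\subseteq left(\ell_i)$ and therefore $|left(\ell_{i-1})\cap B|\le|left(\ell_i)\cap B|$; this inclusion is the only geometric fact used.

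First I would record the two elementary bookkeeping identities, one for the index and one for the threshold. Writing $i=(s+1)g'+sh'$ for the index attached to the pair $(g',h')$, the index attached to $(g'-1,h'+1)$ is $(s+1)(g'-1)+s(h'+1)=i-1$, and the blue threshold attached to $(g'-1,h'+1)$ is $s(g'-1)+(s+1)(h'+1)=\bigl(sg'+(s+1)h'\bigr)+1$. Thus replacing $(g',h')$ by $(g'-1,h'+1)$ decreases the relevant line index by exactly $1$ while increasing the blue threshold by exactly $1$; this is precisely the ``differ only by one'' remark stated just before the observation. Implicitly one needs $g'\ge 1$, so that $(g'-1,h'+1)$ is an admissible pair and $i-1\ge (s+1)-1=s\ge 1$, i.e.\ the line $\ell_{i-1}$ exists.

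Finally I would combine these facts. The hypothesis $\operatorname{sg}(g',h')=-$ means exactly $|left(\ell_i)\cap B|<sg'+(s+1)h'$, so using the monotonicity and then the threshold identity,
\[
|left(\ell_{i-1})\cap B|\le|left(\ell_i)\cap B|<sg'+(s+1)h'<\bigl(sg'+(s+1)h'\bigr)+1=s(g'-1)+(s+1)(h'+1),
\]
which is precisely the inequality defining $\operatorname{sg}(g'-1,h'+1)=-$. There is essentially no genuine obstacle in this observation: the only point requiring (one sentence of) care is that all the quantities involved are integers, so the strict inequality survives the $+1$ shift of the threshold, together with checking that $(g'-1,h'+1)$ lies in the admissible range so that the symbol $\operatorname{sg}(g'-1,h'+1)$ is defined at all.
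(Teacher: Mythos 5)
Your proposal is correct and takes essentially the same route as the paper, which simply notes that the relevant index drops by one and declares the observation trivial; you merely make explicit the two bookkeeping identities (line index $i\mapsto i-1$, blue threshold $T\mapsto T+1$) and the monotonicity $left(\ell_{i-1})\subseteq left(\ell_i)$ that the paper leaves unsaid. No issues.
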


In particular, $\operatorname{sg}(0,1)=-$. Suppose now that $\operatorname{sg}(g',h')=\operatorname{sg}(g-g', h-h')=-$. If $\ell_i$ and $\ell_j$ are the two lines such that $|left(\ell_i)\cap R| = (m+1)g'+mh'$ and $|left(\ell_j)\cap R| = (m+1)(g-g')+m(h-h')$, respectively, then $|left(\ell_i)\cap B| < mg'+(m+1)h'$, $|right(\ell_j)\cap B| > mg'+(m+1)h'$ and $|right(\ell_j)\cap R| = (m+1)g'+mh'$. By Lemma~\ref{lem:9}, there is a line defining an $((m+1)g'+mh',mg'+(m+1)h')$-equitable 2-cutting, and the same happens if $\operatorname{sg}(g',h')=\operatorname{sg}(g-g', h-h')=+$. In the spirit of Theorem~9 in~\cite{BKS2000}, the following theorem shows how to find two couples or three couples with the same sign {(according to the definition of $\operatorname{sg}(g',h')$)}, bounding some of the sizes. {Therefore, a 2-cutting or a 3-cutting exists with sizes as required in the proof of Theorem~\ref{th:5}.}

\begin{theorem}\label{th:g-h}
For any sequence of signs $\operatorname{sg}(g',h')$, there exist two couples $(g_1,h_1), (g_3,h_3)$, with $g_1+g_3=g$ and $h_1+h_3=h$, or three couples $(g_1,h_1), (g_2,h_2), (g_3,h_3)$, with $g_1+g_2+g_3=g$ and $h_1+h_2+h_3=h$, such that they have the same sign and $g_i \le \lfloor \frac{2g}{3}\rfloor$ for any $i$ or $h_i \le \lfloor \frac{2h}{3}\rfloor$ for any $i$.
\end{theorem}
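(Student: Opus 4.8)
The plan is to follow the strategy of Theorem~9 in \cite{BKS2000}, but working with the two‑parameter sign function $\operatorname{sg}(g',h')$ and the monotonicity coming from Observation~\ref{obs:1}. First come the reductions: by the discussion preceding the statement we may assume $g,h\ge 2$ and, after possibly exchanging the two colors, $\operatorname{sg}(1,0)=-$; moreover, if $\operatorname{sg}(g',h')=0$ for some couple, the line $\ell_{(s+1)g'+sh'}$ already yields an equitable $2$‑cutting, so we may assume every sign is $+$ or $-$. The key structural fact is that, by Observation~\ref{obs:1} and its contrapositive (moving from $(g',h')$ to $(g'-1,h'+1)$ preserves a $-$, moving from $(g',h')$ to $(g'+1,h'-1)$ preserves a $+$), along each anti‑diagonal $L_c=\{(g',h'):g'+h'=c\}$, with $g'$ ranging over $[\max(0,c-h),\min(g,c)]$, the sign pattern read in order of increasing $g'$ is a (possibly empty) block of $-$'s followed by a (possibly empty) block of $+$'s; write $n_-(c)$ for the length of the $-$ block. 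In particular $\operatorname{sg}(0,1)=\operatorname{sg}(1,0)=-$, so $L_1$ consists entirely of $-$'s. Note that the complement $(g-g',h-h')$ of a couple on $L_c$ lies on $L_{g+h-c}$ at position $g-g'$, so complementation reverses the order of positions; hence two complementary couples carry the same sign exactly when the $-$ block (resp. $+$ block) of $L_c$ and that of $L_{g+h-c}$ overlap after this reversal, a condition that translates into a simple inequality between $n_-(c)$, $n_-(g+h-c)$ and the (boundary‑dependent) position ranges of the two levels.

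The argument then splits into a dichotomy. Either (A) there is a level $c$ for which this overlap condition holds, in the $-$ case or symmetrically in the $+$ case, so that we obtain two complementary couples $(g_1,h_1),(g_3,h_3)$ with $g_1+g_3=g$, $h_1+h_3=h$ and equal sign, with freedom to choose both $c$ and the common position $g_1$ inside an interval; or (B) for every $c$ the overlap fails in both the $-$ and the $+$ case, which forces the $n_-$ values into a very rigid configuration. In case (A) I would pick $c$ and $g_1$ so that either $g_1$ and $g_3=g-g_1$ both lie in $[\lceil g/3\rceil,\lfloor 2g/3\rfloor]$, or $h_1=c-g_1$ and $h_3=h-h_1$ both lie in $[\lceil h/3\rceil,\lfloor 2h/3\rfloor]$; using that $L_1$ is entirely $-$ and sliding $c$ (or using the analogous statement with the colors swapped) should give enough room to land inside one of these two windows. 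In case (B), the rigidity of the $n_-$ values lets one select three levels $c_1,c_2,c_3$ with $c_1+c_2+c_3=g+h$ and positions in the corresponding $-$ blocks (or all in the $+$ blocks), producing three couples $(g_i,h_i)$ with $\sum g_i=g$, $\sum h_i=h$ and a common sign; taking each $c_i$ near $(g+h)/3$ keeps every $g_i$ below $\lfloor 2g/3\rfloor$, or every $h_i$ below $\lfloor 2h/3\rfloor$. In both cases the resulting couples feed into Theorem~\ref{th:3-cut} (for the $3$‑cutting) or into the Lemma~\ref{lem:9} argument sketched just before the statement (for the $2$‑cutting).

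The hard part is not the existence of same‑sign couples — that is essentially the \cite{BKS2000} argument — but producing them \emph{together with} the size bounds $g_i\le\lfloor 2g/3\rfloor$ or $h_i\le\lfloor 2h/3\rfloor$. This is where one must carefully exploit the two degrees of freedom (the level $c_i$ and the position within each monochromatic block), and where the boundary levels, on which the position range $[\max(0,c-h),\min(g,c)]$ is truncated by the constraints $0\le g'\le g$ and $0\le h'\le h$, require separate attention. I expect the argument to close with a short case analysis according to whether $g\le h$ or $h\le g$ and to the residues of $g$ and $h$ modulo $3$.
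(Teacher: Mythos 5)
Your structural observation is correct, and it is genuinely different from the paper's: Observation~\ref{obs:1} does imply that along each anti-diagonal $g'+h'=c$ the signs, read by increasing $g'$, form a block of $-$'s followed by a block of $+$'s, and complementation does pair $L_c$ with $L_{g+h-c}$ in reversed order. However, what you have written is a plan, not a proof, and the part you leave open is precisely the content of the theorem. Concretely: (i) in your case (A), the overlap condition at level $c$ only yields a nonempty interval $[\,g-t_{g+h-c},\,t_c\,]$ of admissible positions $g_1$ (with $t_c$ the last $-$ position on $L_c$); this interval can reduce to a single bad point, e.g.\ $t_c=0$ and $t_{g+h-c}=g$ force $g_1=0$ and $g_3=g>\lfloor 2g/3\rfloor$, while $h_1=c$ also violates $h_1\le\lfloor 2h/3\rfloor$ whenever $c>\lfloor 2h/3\rfloor$; you assert that ``sliding $c$'' repairs this but give no argument that overlap occurs at a level and position meeting either window. (ii) In your case (B), choosing levels with $c_1+c_2+c_3=g+h$ only guarantees $\sum_i(g_i+h_i)=g+h$, not the separate identities $\sum_i g_i=g$ and $\sum_i h_i=h$; these require coordinating the three positions inside the three monochromatic blocks, and the ``rigid configuration'' of the $n_-$ values that is supposed to make this possible is never described. (iii) The size bounds, which you yourself flag as the hard part and which are exactly what this theorem adds to the plain existence argument of~\cite{BKS2000}, are nowhere established; phrases such as ``should give enough room'' and ``I expect the argument to close'' stand in for the proof.

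For comparison, the paper's proof avoids the diagonal bookkeeping altogether. It splits on the parities of $g$ and $h$ (the even/even case is immediate with $(g/2,h/2)$ taken twice), writes $g=2i+1$ and $h=2j+1$ in the main case, and anchors the analysis at the near-halving couples $(i+1,j)$, $(i,j+1)$ and $(i,j)$. Whenever these do not already produce two or three same-signed couples, it scans the rows $(\cdot,j)$, $(\cdot,j+1)$ and $(\cdot,0)$ (and then, symmetrically in the second half of the proof, the corresponding columns) for the last sign change at or beyond $\lfloor g/3\rfloor$, so that every couple it outputs has a coordinate equal to $0$ or $1$, or bounded by $\lfloor g/3\rfloor$, or trapped between $\lfloor g/3\rfloor$ and roughly $g/2$; the bound $g_i\le\lfloor 2g/3\rfloor$ (or $h_i\le\lfloor 2h/3\rfloor$) is then automatic in each subcase. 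If you want to pursue the anti-diagonal route you would need an analogous quantitative selection of levels and positions; as it stands, the dichotomy (A)/(B) replaces the proof rather than giving it.
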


\begin{proof}
Depending on the parity of $g$ and $h$, we have
four cases. If $g$ and $h$ are even, then
$(g/2,h/2)$ and $(g/2,h/2)$ satisfy the
statement. We explain in detail the case of $g$
and $h$ being odd. The other two cases, $g$ even
and $h$ odd and vice versa, can be analyzed in a similar and simpler way.

Let $g=2i+1$ and $h=2j+1$ for some integers $i$
and $j$. As $g,h\ge 2$, then $i,j\ge 1$. If
$\operatorname{sg}(i+1,j)=\operatorname{sg}(i,j+1)$,
we are done. Thus, we may assume that
$\operatorname{sg}(i+1,j)$ and
$\operatorname{sg}(i,j+1)$ are different and,
without loss of generality $\operatorname{sg}(i+1,j)= -$ and
$\operatorname{sg}(i,j+1)=+$. If
$\operatorname{sg}(i,j)=-$, then $(0,1), (i,j)$
and $(i+1,j)$ satisfy the theorem, so we may
assume that $\operatorname{sg}(i,j)=+$. We
distinguish two cases: $\operatorname{sg}(1,j)=+$
or $\operatorname{sg}(1,j)=-$.

Suppose first that $\operatorname{sg}(1,j)=+$. If
$\operatorname{sg}(i,0)=+$, then we are done, as
$\operatorname{sg}(i,j+1)=+$. Therefore, we may
assume that $\operatorname{sg}(i,0)=-$. Let $k_1
< j$ be the integer such that
$\operatorname{sg}(i,k_1)=-$ and
$\operatorname{sg}(i,k') = +$ for all $k'$,
$k_1<k'\le j$. Note that
$\operatorname{sg}(i,0)=-$ and
$\operatorname{sg}(i,j)=+$, so $k_1$ must exist.
Suppose that $k_1\ge \lfloor \frac{h}{3}\rfloor =
k_2$. Then, either $(i,k_1+1),(g-i,h-k_1-1)$ or
$(0,1),(i,k_1),(g-i,h-k_1-1)$ satisfy the
theorem. Hence, we may suppose that $k_1 < k_2$
and $\operatorname{sg}(i,k')=+$ for $k_2\le k'\le
j$. Take the couples $(1,j_1), (i,j_2), (i,j_3)$,
where $1\le j_1 \le k_2$, $j_2 = \lfloor
\frac{h-j_1}{2}\rfloor$ and $j_3 = \lceil
\frac{h-j_1}{2}\rceil$. Observe that $k_2 \le
j_2,j_3 \le j$, so $\operatorname{sg}(i,j_2)=+$
and $\operatorname{sg}(i,j_3)=+$ for all possible
values of $j_2$ and $j_3$. If there exists a
value of $j_1$ such that
$\operatorname{sg}(1,j_1)=+$, then the theorem
holds. Therefore, we may assume that
$\operatorname{sg}(1,j_1)=-$ for $j_1=1, \ldots ,
k_2$. Again, if $j > k'_1 \ge k_2$ is an integer
(which necessarily exists as
$\operatorname{sg}(1,j)=+$) such that
$\operatorname{sg}(1,k'_1)=-$ and
$\operatorname{sg}(1,k'_1+1) = +$, then, either
$(1,k'_1+1),(g-1,h-k'_1-1)$ or
$(0,1),(1,k'_1),(g-1,h-k'_1-1)$ satisfy the statement.

Suppose now that $\operatorname{sg}(1,j)=-$. Let
$k_1 < i$ be the integer such that
$\operatorname{sg}(k_1,j)=-$ and
$\operatorname{sg}(k',j) = +$ for all~$k'$ with
$k_1<k'\le i$. Recall that
$\operatorname{sg}(i,j)=+$. Suppose that $k_1\ge
\lfloor \frac{g}{3}\rfloor = k_2$. Then, either
$(k_1+1,j),(g-k_1-1,h-j)$ or
$(1,0),(k_1,j),(g-k_1-1,h-j)$ satisfy the
statement. Hence, we may suppose that $k_1 < k_2$
and $\operatorname{sg}(k',j)=+$ for $k_2\le k'\le i$.

By Observation~\ref{obs:1},
$\operatorname{sg}(0,j+1)=\operatorname{sg}(1,j)
= -$. Thus, we can repeat the previous reasoning,
as $\operatorname{sg}(i,j+1)=+$.  Let $k'_1 < i$
be the integer such that
$\operatorname{sg}(k'_1,j+1)=-$ and
$\operatorname{sg}(k',j+1) = +$ for all $k'$,
$k'_1<k'\le i$. If $k'_1\ge k_2$, then either
$(k'_1+1,j+1),(g-k'_1-1,h-j-1)$ or
$(1,0),(k'_1,j+1),(g-k'_1-1,h-j-1)$ satisfy the
statement, so we may suppose that $k'_1 < k_2$
and $\operatorname{sg}(k',j+1)=+$ for $k_2\le k'\le i$.

Now, consider the couples $(i_1,0), (i_2,j),
(i_3,j+1)$, where $1\le i_1 \le k_2$, $i_2 =
\lfloor \frac{g-i_1}{2}\rfloor$ and $i_3 = \lceil
\frac{g-i_1}{2}\rceil$. Note that $k_2 \le
i_2,i_3 \le i$, so $\operatorname{sg}(i_2,j)=+$
and $\operatorname{sg}(i_3,j+1)=+$ for all
possible values of $i_2$ and $i_3$. If there
exists a value of $i_1$ such that
$\operatorname{sg}(i_1,0)=+$, then the statement
holds. Therefore, we may assume that
$\operatorname{sg}(i_1,0)=-$ for $i_1=1, \ldots ,
k_2$. Suppose that $\operatorname{sg}(i,0)=+$.
Let $k''_1 \ge k_2$ be the integer such that
$\operatorname{sg}(k''_1,0)=-$ and
$\operatorname{sg}(k''_1+1,j) = +$. Then, either
$(k''_1+1,0),(g-k''_1-1,h)$ or
$(1,0),(k''_1,0),(g-k''_1-1,h)$ satisfy the
statement. On the contrary, suppose that
$\operatorname{sg}(i,0)=-$. By
Observation~\ref{obs:1},
$\operatorname{sg}(i-1,1)=-$, so
$(i-1,1),(i+1,j),(1,j)$ satisfy the statement as
$\operatorname{sg}(i+1,j)=-$ and $\operatorname{sg}(1,j)=-$.
\end{proof}

Now, we briefly explain the recursive algorithm to build a partition $\mathcal{P}$: Using Theorem~\ref{th:g-h}, we find couples $(g_1,h_1), (g_3,h_3)$ or couples $(g_1,h_1), (g_2,h_2), (g_3,h_3)$ with the same sign, and using Lemma~\ref{lem:9} or Theorem~\ref{th:3-cut}, we compute an equitable 2-cutting or an equitable 3-cutting. For each of the disjoint regions defined by the cutting, we continue recursively. Presorting the points according to their $x$-coordinates, which takes $O(N\log(N))$ time, couples $(g_1,h_1), (g_3,h_3)$ or couples $(g_1,h_1), (g_2,h_2), (g_3,h_3)$ can be clearly computed in linear time. Building an equitable 2-cutting requires $O(N^{\frac{4}{3}}\log (N))$ time (Lemma~\ref{lem:9}), and an equitable 3-cutting $O(N^{\frac{4}{3}}\log^2(N))$ time (Theorem~\ref{th:3-cut}). By Theorem~\ref{th:g-h}, at most $O(\log(g)+\log(h))$ iterations are required so that each of the obtained subproblems has $g$ or $h$ equals~1. Thus, Lemma~\ref{lem:9} and Theorem~\ref{th:3-cut} are applied at most $O(\log(g)+\log(h))$ times. Moreover, as all subproblems defined by the cuttings are disjoint, and building a partition $\mathcal{P}$ when $g=1$ or $h=1$ takes $O(n_i^{\frac{4}{3}}\log^3(n_i))$ time for a subproblem of size $n_i$, we have proved the following theorem.

\begin{theorem}\label{th:19}
Let $s\ge 1$, $g\ge 0$ and $h\ge 0$ be integers such that $g+h\ge 1$. Assume that $|R|=(s+1)g+sh$ and $|B|={sg}+(s+1)h$. Then, a subdivision $X_1\cup\cdots\cup X_g\cup Y_1\cup\cdots\cup Y_h$ of the plane into $g+h$ disjoint convex regions such that every $X_i$ contains exactly $s+1$ red points and $s$ blue points and every $Y_j$ contains exactly $s$ red points and $s+1$ blue points can be computed in $O(N^{\frac{4}{3}}\log^3(N))$ time, where $N=|R|+|B|$.
\end{theorem}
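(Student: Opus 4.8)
The plan is to turn the existence proof of Theorem~\ref{th:5} from~\cite{KKS2004} into an algorithm, charging all geometric work to the subroutines of Lemma~\ref{lem:9}, Theorem~\ref{th:C}, and Theorem~\ref{th:3-cut}. I would proceed by recursion, maintaining at each node the invariant that the current subinstance lives inside a convex region and carries $(s+1)g'+sh'$ red and $sg'+(s+1)h'$ blue points for its local parameters $g',h'$, and that the union of the regions produced for this subinstance tiles that convex region.

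For the base of the recursion I would handle $g\le 1$ or $h\le 1$ directly. If $h=1$, recolor one blue point $q$ red: the instance becomes a $\{s+1,s\}$-equitable set with $(s+1)(g+1)$ red and $s(g+1)$ blue points, so Theorem~\ref{th:C} (with $c=s+1$, $d=s$, parameter $g+1$) produces in $O(n_i^{4/3}\log^2 n_i\log n_i)=O(n_i^{4/3}\log^3 n_i)$ time a subdivision into $g+1$ convex pieces each with $s+1$ red and $s$ blue points; recoloring $q$ back to blue turns its piece into a $Y$-region and the other $g$ into $X$-regions. The case $g=1$ is symmetric (recolor a red point blue), and $g=h=0$ is vacuous.

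For $g,h\ge 2$, after a single global presort of the points by $x$-coordinate ($O(N\log N)$) I would use the sign function $\operatorname{sg}(\cdot,\cdot)$ together with Theorem~\ref{th:g-h} to find, in linear time per node, either a pair $(g_1,h_1),(g_3,h_3)$ or a triple $(g_1,h_1),(g_2,h_2),(g_3,h_3)$ summing to $(g,h)$, all of the same sign, with all first coordinates $\le\lfloor 2g/3\rfloor$ or all second coordinates $\le\lfloor 2h/3\rfloor$. In the pair case the intermediate-value argument stated just before Theorem~\ref{th:g-h}, invoked through Lemma~\ref{lem:9}, yields an equitable $2$-cutting by a line in $O(n_i^{4/3}\log n_i)$ time, splitting the node into a $(g_1,h_1)$- and a $(g_3,h_3)$-instance of the required shape; in the triple case Theorem~\ref{th:3-cut} with $r_i=(s+1)g_i+sh_i$ and $b_i=sg_i+(s+1)h_i$ returns an equitable $2$- or $3$-cutting in $O(n_i^{4/3}\log^2 n_i)$ time (and $\operatorname{sg}(g',h')=0$, should it ever occur, is itself an equitable $2$-cutting). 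Then recurse on each region.

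For the running time, every cutting multiplies $g$ or $h$ by at most $2/3$ in each child, so along any root-to-leaf path the parameter that is controlled keeps shrinking geometrically and a leaf with $g=1$ or $h=1$ is reached after depth $O(\log g+\log h)=O(\log N)$; since the child instances at a common level are disjoint, superadditivity gives $\sum_i n_i^{4/3}\le N^{4/3}$, so each level costs $O(N^{4/3}\log^2 N)$ and the whole recursion $O(N^{4/3}\log^3 N)$, with the base calls adding only $\sum_i O(n_i^{4/3}\log^3 n_i)=O(N^{4/3}\log^3 N)$. The delicate points, and where I expect the real work to lie, are verifying that the $2$- and $3$-cuttings delivered by Theorem~\ref{th:g-h} always split an instance of type $\big((s+1)g'+sh',\,sg'+(s+1)h'\big)$ into children of exactly the same type (the Kaneko--Kano--Suzuki case bookkeeping, in the spirit of Theorem~9 of~\cite{BKS2000}), and that controlling only one of $g,h$ per level still suffices to force logarithmic recursion depth; once these are checked the bound follows.
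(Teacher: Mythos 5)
Your proposal follows essentially the same route as the paper: the recoloring trick via Theorem~\ref{th:C} for the base cases $g=1$ or $h=1$, the sign function and Theorem~\ref{th:g-h} to locate same-sign pairs or triples, Lemma~\ref{lem:9} and Theorem~\ref{th:3-cut} to realize the equitable $2$- or $3$-cuttings, and the same depth-$O(\log g+\log h)$ disjoint-subproblem accounting for the $O(N^{4/3}\log^3 N)$ bound. The ``delicate points'' you flag are exactly what Theorem~\ref{th:g-h} and the preceding discussion in the paper are set up to handle, so the argument is complete as you describe it.
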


We remark that, in order to apply Theorem~\ref{th:g-h}, we assumed $\operatorname{sg}(g',h')\ne 0$ for all $g',h'$. But recall that, if $\operatorname{sg}(g',h')= 0$ for some values $g',h'$, the corresponding vertical line defines an equitable 2-cutting. Therefore, if $\operatorname{sg}(g',h')= 0$ for some values $g',h'$ when applying Theorem~\ref{th:g-h}, then we have directly found a 2-cutting, and we do not need to apply either Lemma~\ref{lem:9} or Theorem~\ref{th:3-cut}.

\section{Concluding remarks and open problems}\label{sec-4}

We have considered the problem of covering a bi-colored point set $S=R\cup B$ by graphs $K_{1,3}$, called stars, with straight legs and different colors in each class of the bipartition. First, in Sections~\ref{subsec-2.4},~\ref{subsec-2.2}, and~\ref{subsec-2.3} we have shown that some sets can be fully covered, provided that $|R|=r$ and $|B|=b$ allow it, like $\{1,3\}$-equitable sets, linearly separable sets, and double chains. In Sections~\ref{subsec-2.1} and~\ref{subsec-2.5} we have shown that there are sets for which a full cover is not possible, even if $r$ and $b$ allow it, like all sets in convex position, but also some sets in general position. Finally, in Section~\ref{sec-3}, we have proved that 8/9 of the points can always be covered, if $r$ and $b$ allow it.
The following question remains open:

\begin{openproblem}
Given a red-blue point configuration with $b\le r\le 3b$, is it always possible to $K_{1,3}$-cover at least $r+b-o(r+b)$ points?
\end{openproblem}

As a collateral contribution, in Table~\ref{table:small_known_values} we provide, for some values of  $r$ and $b$, the exact or possible values of~$\mathcal{U}(r,b)$, which denotes the maximum over all point sets in general position with $|R|=r$ and $|B|=b$ of the minimum number of points uncovered.
\begin{table}[!htb]
\begin{center}
\scalebox{0.7}{
  \begin{tabular}{ c|cccccccccccccccccccc}
\diagbox[width=2em]{$ r $}{ $ b $}
             & 1 & 2 & 3 & 4 & 5 & 6 & 7 & 8& 9 & 10 & 11 & 12 & 13 & 14 & 15 & 16  & 17 & 18 & 19 & 20\\ \hline

          1 & 2 & \multicolumn{19}{|c}{}\\ \cline{3-3}

          2 & 3 & \cellcolor{green!40}4 &\multicolumn{18}{|c}{}\\ \cline{4-4}

          3 & \cellcolor {cyan} 0  & \cellcolor {cyan} 1 & 2 &\multicolumn{17}{|c}{}\\ \cline{5-5}

          4 & \multicolumn{1}{c|}1 & 2 & \multicolumn{1}{c|}3 & \cellcolor{green!40}4 &\multicolumn{16}{|c}{}\\ \cline{6-6}

          5 & \multicolumn{1}{c|}2 & 3 & \cellcolor{green!40}4 & \cellcolor {cyan} 1 & 2 &\multicolumn{15}{|c}{}\\ \cline{7-7}

          6 & \multicolumn{1}{c|}3 & \cellcolor {cyan} 0 & 1 & \multicolumn{1}{c|}2 & \multicolumn{1}{c|}3 & \cellcolor{green!40}4 &\multicolumn{14}{|c}{}\\ \cline{8-8}

          7
           & 4 & \multicolumn{1}{c|}1 & 2 & \multicolumn{1}{c|} 3 & \cellcolor{green!40}4 & \cellcolor{green!40}5 & \cellcolor {cyan} 2 &\multicolumn{13}{|c}{}\\ \cline{9-9}

          8
           & 5 & \multicolumn{1}{c|}2 & 3 & \cellcolor{green!40}4 & \cellcolor {cyan} 1 & 2 & \multicolumn{1}{c|}3 & \cellcolor{green!40}4 &\multicolumn{12}{|c}{use the  symmetry}\\ \cline{10-10}

          9
          & 6 & \multicolumn{1}{c|}3 & \cellcolor {cyan} 0 & 1 & \multicolumn{1}{c|} 2 & 3 & \cellcolor{green!40}4 & \cellcolor{green!40}5 & \cellcolor {yellow}2 &\multicolumn{11}{|c}{}\\ \cline{11-11}

        10
        & 7 & 4 & \multicolumn{1}{c|}1 & 2 & \multicolumn{1}{c|}3
        & \cellcolor{green!40}4 & 1,5 & \cellcolor {cyan}2 & 3 & \cellcolor{green!40}4
        &\multicolumn{10}{|c}{}\\ \cline{12-12}

        11 & 8 & 5 & \multicolumn{1}{r|}2 & 3 & \cellcolor{green!40}4 & \cellcolor {cyan} 1 & 2 & \multicolumn{1}{c|}3 & \cellcolor{green!40}4 & \cellcolor{green!40}5
        & \cellcolor{orange}2 &\multicolumn{9}{|c}{}\\ \cline{13-13}

        12 & 9 & 6 & \multicolumn{1}{r|}3 & \cellcolor {cyan} 0 & 1 & \multicolumn{1}{r|}2 & 3 & \cellcolor{green!40}4 & 1,5 & 2,6 & \cellcolor{cyan}3 & \cellcolor{green!40}4 &\multicolumn{8}{|c}{}\\ \cline{14-14}

        13
        & 10 & 7 & 4 & \multicolumn{1}{r|}1 & 2
        & \multicolumn{1}{r|}3 & \cellcolor{green!40}4 & 1,5 & \cellcolor {cyan} 2 & 3
        & \cellcolor{green!40}4 & \cellcolor{green!40}5 & 2,6 &\multicolumn{7}{|c}{}\\ \cline{15-15}

        14
        & 11 & 8 & 5 & \multicolumn{1}{r|}2 & 3
        & \cellcolor{green!40}4 & \cellcolor {cyan} 1 & 2 & \multicolumn{1}{c|}3 & \cellcolor{green!40}4
        & 1,5 & 2,6 & \cellcolor {yellow}3 & \cellcolor{green!40}4 &\multicolumn{6}{|c}{}\\ \cline{16-16}

        15 & 12 & 9 & 6 & \multicolumn{1}{r|}3 & \cellcolor {cyan} 0 & 1 & \multicolumn{1}{r|}2 & 3 & \cellcolor{green!40}4 & 1,5 & 2,6 & \cellcolor {cyan}3 & \cellcolor{green!40}4 & \cellcolor{green!40}5 & 2,6 &\multicolumn{5}{|c}{}\\ \cline{17-17}

        16
        & 13 & 10 & 7 & 4 & \multicolumn{1}{r|}1
        & 2 & \multicolumn{1}{r|}3 & \cellcolor{green!40}4 & 1,5 & \cellcolor {cyan}2
        & 3 & \cellcolor{green!40} 4 & 1,5 & 2,6 & 3,7
        & 4,8&\multicolumn{4}{|c}{}\\ \cline{18-18}

        17
        & 14 & 11 & 8 & 5 & \multicolumn{1}{r|}2
        & 3 & \cellcolor{green!40}4 & \cellcolor {cyan} 1 & 2 & \multicolumn{1}{c|}3
        & \cellcolor{green!40}4 & 1,5 & 2,6 & 3,7 & \cellcolor{green!40}4 & \cellcolor{green!40}5  & 2,6 &\multicolumn{3}{|c}{}\\ \cline{19-19}

        18
        & 15 & 12 & 9 & 6 & \multicolumn{1}{c|}3
        & \cellcolor {cyan} 0 & 1 & \multicolumn{1}{c|}2 & 3 & \cellcolor{green!40}4
        & 1,5 & 2,6 & \cellcolor {cyan}3 & \cellcolor{green!40}4 & 1,5
        &  2,6 & 3,7& \cellcolor{yellow}4 &\multicolumn{2}{|c}{}\\ \cline{20-20}

        19
        & 16 & 13 & 10 & 7 & 4
        & \multicolumn{1}{r|}1 & 2 & \multicolumn{1}{c|}3 & \cellcolor{green!40}4 & 1,5
        & \cellcolor {cyan} 2 & 3 & \cellcolor{green!40}4 & 1,5 & 2,6
        & 3,7 & \cellcolor{yellow}4 & \cellcolor{green!40}5 & 2,6 &\multicolumn{1}{|c}{}\\ \cline{21-21}

        20
        & 17 & 14 & 11 & 8 & 5
        & \multicolumn{1}{r|}2 & 3 & \cellcolor{green!40}4 & \cellcolor {cyan} 1 & 2
        & \multicolumn{1}{c|}3  & \cellcolor{green!40}4 & 1,5 & 2,6 & 3,7
        & \cellcolor{green!40}4 & 1,5 & 2,6 & 3,7 & 4,8\\

%
%
%
%
\end{tabular}
}
    \caption{Exact values (or possible values) of $\mathcal{U}(r,b)$ for small $r$ and $b$.}
  \label{table:small_known_values}
\end{center}
\end{table}

First note that, by Theorem \ref{lem:PerfectCoverings}, all entries except those of the form $(r,3r)$ or $(3b,b)$ are positive. Also, clearly $\mathcal{U}(r,b) \equiv r+b \pmod{4}$. The blue cells come from Theorem~\ref{th:10}. The yellow cells are an application of Theorem~\ref{th:5} in the same way that Theorem~\ref{th:1} was proved. For instance, if $(r,b)=(19,17)$, then because $19=3\cdot5+4$ and $17=3\cdot4+5$; it follows by Theorem~\ref{th:5} that the plane can be partitioned into 4 convex regions such that three of them have 5 red points and 4 blue points, and the other has 4 red points and 5 blue points. By Theorem~\ref{th:10}, $\mathcal{U}(5,4)=1$, thus in each region 8 of the 9 points can be covered. Thus $\mathcal{U}(19,17) \le 4$, and by our previous remarks, $\mathcal{U}(19,17) = 4$. The other yellow cells are proved similarly. The white cells use the fact that
\begin{equation}\label{eq:possible_change}
\mathcal{U}(r,b+1)=
\left\{\begin{array}{l}
\mathcal{U}(r,b)+1\\
\text{ or }\\ \mathcal{U}(r,b)-3
\end{array}
\right.
\end{equation}
The green cells come from (\ref{eq:possible_change}), and from Theorem \ref{lem:PerfectCoverings} in the case of the entries that are 4 or 5. Finally, the orange cell is justified by the next result.
\begin{lemma}
$\mathcal{U}(11,11)=2$.
\end{lemma}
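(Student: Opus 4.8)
The bound $\mathcal{U}(11,11)\ge 2$ is immediate, since every $K_{1,3}$-covering covers a multiple of $4$ points and $22\equiv 2\pmod 4$. To prove $\mathcal{U}(11,11)\le 2$, the plan is the following: given an arbitrary $(11,11)$-set $S=R\cup B$ in general position, first try to cut off with a single line a $\{1,3\}$-equitable ``corner'' whose removal leaves a set with $\mathcal{U}\le 2$; and then treat the sets for which no such corner exists — which turn out to be the (nearly) convex, almost alternately coloured ones — by an explicit construction.

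For the first part, I would rotate a directed line $\ell$ and let $\beta(\theta)$ count the blue points among the four points of $S$ lying in $left(\ell)$ when $\ell$ has direction $\theta$; as in the proof of Lemma~\ref{lem:9}, $\beta$ is defined off a finite set of directions and changes by at most $1$ at each transition. If $\beta$ attains $1$ or $3$ somewhere then, swapping colours if needed, some directed line $\ell$ makes $left(\ell)\cap S$ a $(3,1)$-set; being $\{1,3\}$-equitable it is covered by one star (Theorem~\ref{th:3.4}), while $right(\ell)\cap S$ is an $(8,10)$-set. Since $(10,8)=(3\cdot4-2,\,4+2\cdot2)$ with $4\ge\frac{5}{8}\cdot 2$, Theorem~\ref{th:10} gives $\mathcal{U}(8,10)=\mathcal{U}(10,8)\le 2$, and $\ell$ keeps the two coverings apart, so at most two points of $S$ remain uncovered. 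Otherwise $\beta$ is constant ($\pm1$-continuous and avoiding odd values, it is identically $0$, $2$, or $4$); one can still finish if the blue count among the eight leftmost points attains $2$ for some direction, peeling off a $(6,2)$-equitable corner and leaving a $(5,9)$-set, for which $\mathcal{U}(5,9)=\mathcal{U}(9,5)\le\mathcal{U}(9,4)+1\le\mathcal{U}(9,3)+2=2$ (using $\mathcal{U}(r,b+1)\le\mathcal{U}(r,b)+1$ and $\mathcal{U}(9,3)=0$ by Theorem~\ref{th:3.4}).

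There remain the sets on which the four leftmost points are, in every direction, two red and two blue (or monochromatic), i.e.\ $S$ is essentially in convex position and almost alternately coloured, or one colour class is nested deep inside the convex hull of the other. For these I would build the five stars by hand, following the scheme in the proof of Theorem~\ref{th:3.1}: number the points $1,\dots,22$ around $conv(S)$ (or around its outer layer) and, for a suitable anchor index, form stars on quadruples $\{3k-2,\,3k-1,\,3k,\,\sigma(k)\}$, $k=1,\dots,5$, with $\sigma$ decreasing on the ``far'' indices; in the nested case one instead sends two stars from interior points to the outer layer so as to cut off empty regions and recurses on the rest. Two points survive, and they must be monochromatic, for a subset with $10$ red and $10$ blue points cannot be star-covered ($3g+h=g+3h=10$ has no solution in nonnegative integers), so the leftover pair cannot be bichromatic. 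This last case is the main obstacle: for a near-alternating convex $(11,11)$-set no line isolates a $\{1,3\}$-equitable corner together with a low-$\mathcal{U}$ remainder (a consecutive arc of an alternating sequence is never a $(3m,m)$- or $(m,3m)$-set), so the reduction by cuts is unavailable and one is forced into an explicit covering, the delicate point being to choose the anchor so that all four arcs cut off by every star have coverable sizes.
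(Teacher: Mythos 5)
Your lower bound ($22\equiv 2\pmod 4$) and your first reduction are fine: if some directed line cuts off a $(3,1)$- or $(1,3)$-cap, the remainder is an $(8,10)$- or $(10,8)$-set and Theorem~\ref{th:10} with $k=4$, $t=2$ gives $\mathcal{U}(10,8)\leq 2$; the $(6,2)$-cap fallback with $\mathcal{U}(9,5)\leq\mathcal{U}(9,3)+2=2$ is also correct. But the proof does not close. The residual case --- every $4$-cap has exactly $2$ blue points (or is monochromatic) and no $8$-cap has exactly $2$ blue points --- is non-vacuous, and your structural claim that such an $S$ is ``essentially in convex position and almost alternately coloured, or one colour class is nested inside the other'' is false. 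Take a colour-alternating convex $16$-gon ($8$ red, $8$ blue) and place $3$ red and $3$ blue points in a tiny cluster at its centre: every $4$-cap consists of $4$ consecutive vertices of the $16$-gon, hence is $2$ red and $2$ blue, yet the set is not in convex position and the deep part is bichromatic. For such sets your ``explicit construction by anchored quadruples around $conv(S)$'' is not defined, and you yourself flag this case as ``the main obstacle''; as written it is a plan, not a proof.

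The paper closes exactly this gap by choosing the cut size more carefully: it takes two parallel, oppositely directed lines $\ell_1,\ell_2$ each with $9$ points of $S$ to its left, so that the middle strip contains exactly $4$ points. If some $|left(\ell_i)\cap B|\geq 6$ (or $\geq 6$ red), Lemma~\ref{lem:9} produces a $(3,6)\,/\,(8,5)$ split with $\mathcal{U}(3,6)=\mathcal{U}(8,5)=1$; otherwise both blue counts lie in $\{4,5\}$ by pigeonhole, and each of the three resulting subcases decomposes $S$ into two $(4,5)$- or $(5,4)$-sets (each with $\mathcal{U}=1$ by Theorem~\ref{th:10}) plus a fully coverable $(3,1)$- or $(1,3)$-middle, using Lemma~\ref{lem:9}(ii) for the mixed subcase $|left(\ell_1)\cap B|=4$, $|left(\ell_2)\cap B|=5$. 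The point you are missing is that with $m=9$ the case analysis is exhaustive and every branch reduces to sets of known $\mathcal{U}$, whereas cutting $4$ or $8$ points leaves a family of configurations that cannot be described, let alone covered, by an appeal to the convex-position construction of Theorem~\ref{th:3.1}. To repair your argument you would either have to prove the residual case directly (which the counterexample above shows is not the convex alternating case) or switch to $9$-point cuts as the paper does.
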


\begin{proof}
Consider two parallel lines $\ell_1$ and $\ell_2$ with opposite directions such that $|left(\ell_1) \cap (R \cup B)|=|left(\ell_2) \cap (R \cup B)|=9$.
First, suppose that $|left(\ell_1) \cap B| \ge 6$. It follows that $| left(\ell_2) \cap B| \le 5$ and so, by Lemma \ref{lem:9}, there is a line $\ell_3$ such that $|left(\ell_3) \cap (R \cup B)|=9$ and $|left(\ell_3) \cap B| = 6$. Then $left(\ell_3)\cap(R \cup B)$ is a $(3,6)$-set and $right(\ell_3)\cap(R \cup B)$ is an $(8,5)$-set. By Theorems~\ref{th:C} and~\ref{th:10}, $\mathcal{U}(3,6)=\mathcal{U}(8,5)=1$. Thus, it is possible to $K_{1,3}$-cover all but two points in $R \cup B$. The case $|left(\ell_2) \cap B| \ge 6$ is symmetric and, by exchanging the roles of~$R$ and~$B$, the previous reasoning holds as well for $|left(\ell_1) \cap R| \ge 6$ or $|left(\ell_2) \cap R| \ge 6$.

Hence, the following cases remain. First, if $|left(\ell_1) \cap B|=|left(\ell_2) \cap B| = 5$, then both $left(\ell_1) \cap (R \cup B)$ and $left(\ell_2) \cap (R \cup B)$ are $(4,5)$-sets, and $(R \cap B)\cap(right(\ell_1))\cap(right(\ell_2))$ is a $(3,1)$-set. Again by Theorem \ref{th:10}, $\mathcal{U}(4,5)=1$, so all but 2 points in $R\cup B$ can be $K_{1,3}$-covered. The symmetric situation exchanging the roles of $R$ and $B$ is proved analogously.
Second, if $|left(\ell_1) \cap B|=4$ and $|left(\ell_2) \cap B|=5$ then, by Lemma~\ref{lem:9}~(ii), there is a line $\ell_4$ through  a blue point such that $\ell_4$ leaves exactly 4 blue points and 4 red points to its left. As in the proof of Lemma~\ref{lem:9}, it can be assumed that the line $\ell_5$ in the opposite direction of~$\ell_4$ that has 9 points of $R \cup B$ to its left satisfies that $|left(\ell_5)\cap B|=5$  just as~$\ell_2$. Thus, in this direction we have exactly the situation of the previous case. Therefore $\mathcal{U}(11,11)=2$.
The symmetric situation exchanging the roles of~$\ell_1$ and~$\ell_2$ is proved analogously.
\end{proof}

Let us finish the present work with the remark that the proofs of Theorems~\ref{th:3.4} and~\ref{th:3.2}, for equitable sets and for linearly separable sets, can be extended to obtain, given a $s\ge 4$, $K_{1,s}$-coverings of $(sg+h,sh+g)$-sets, where $g$ and $h$ are non-negative integers. Thus, an interesting problem is that of studying the number of points that can be $K_{1,s}$-covered for an $(r,b)$-set.

\section{Acknowledgments}

Some parts of this work were done during the 2nd Austrian-Japanese-Mexican-Spanish Workshop on Discrete Geometry, held at the Universitat Polit\`ecnica de Catalunya during July 6-10, 2015. The authors would like to thank all participants of this workshop for their valuable discussions, as well as the anonymous referees for valuable comments which helped to improve the paper. 

\bibliographystyle{abbrv}

\end{document}